\def\mathcenterto#1#2{\mathclap{\phantom{#1}\mathclap{#2}}\phantom{#1}}
\let\old@widetilde\widetilde
\def\widetildeto#1#2{\mathcenterto{#2}{\old@widetilde{\mathcenterto{#1}{#2\,}}}}
\let\old@widehat\widehat
\def\widehatto#1#2{\mathcenterto{#2}{\old@widehat{\mathcenterto{#1}{#2\,}}}}
\let\old@overline\overline
\def\overlineto#1#2{\mathcenterto{#2}{\old@overline{\mathcenterto{#1}{#2\,}}}}
\DeclareMathOperator*{\trace}{\mathrm{trace}}
\def\0{\boldsymbol{0}}
\def\b{\boldsymbol{b}}
\def\I{\mathbf{I}}
\def\transpose{\top} 
\def\R{\mathbb{R}}
\def\b0{\mathbf{0}}
\def\LL{\mathcal{L}}
\def\XX{\mathcal{X}}
\def\se3{\mathfrak{se}(3)}
\def\nR{{\widetilde{R}}}
\def\nt{\tilde{t}}
\def\nM{\widetildeto{H}{M}}
\def\lgamma{{\overline{\gamma}}}
\def\diag{\mathrm{diag}}
\def\grad{\mathrm{grad\,}}
\def\nH{{\widetilde{\mathrm{\Omega}}}}
\def\nGamma{\widetilde{\mathrm{\Gamma}}}
\def\rGamma{\mathrm{\Gamma}}
\def\nOmega{{\widetilde{\mathrm{\Omega}}}}
\def\gpm{\mathsf{GPM-PGO}}
\def\nagm{\mathsf{NAG-PGO}}
\def\gpmo{\mathsf{GPM-PGO^*}}
\def\agpm{\mathsf{AGPM-PGO}}
\def\agpmo{\mathsf{AGPM-PGO^*}}
\def\nagmo{\mathsf{NAG-PGO^*}}
\def\sesync{\mathsf{SE-Sync}}
\newcommand{\Xk}[1]{X^{(#1)}}
\newcommand{\Yk}[1]{Y^{(#1)}}
\newcommand{\Rk}[1]{R^{(#1)}}
\newcommand{\tk}[1]{t^{(#1)}}
\newcommand{\thk}[1]{\theta^{(#1)}}
\def\QQ{\mathcal{Q}}
\def\EE{\mathcal{E}}
\def\VV{\mathcal{V}}
\def\aEE{\overrightarrow{\EE}}
\def\aGG{\overrightarrow{G}}
\long\def\answer#1{}
\long\def\comment#1{}
\newcommand{\ten}[2]{{#1}\times 10^{#2}}
\theoremstyle{definition}
\crefname{prop}{Proposition}{Propositions}
\theoremstyle{remark}
\newtheorem*{remark*}{{Remark}}
\newcolumntype{P}[1]{>{\centering\arraybackslash}p{#1}}
\newcolumntype{M}[1]{>{\centering\arraybackslash}m{#1}}
\newcommand{\RNum}[1]{\uppercase\expandafter{\romannumeral #1\relax}}
\begin{document}
\mainmatter              
\title{Generalized Proximal Methods for Pose Graph Optimization}
\titlerunning{ Pose Graph Optimization}  
%
\author{Taosha Fan\inst{1} \and Todd Murphey\inst{1}}
\authorrunning{T. Fan and T. Murphey} 
%
\tocauthor{Taosha Fan and Todd Murphey}
\institute{Northwestern University, 2145 Sheridan Rd, Evanston, IL 60208,\\
	\email{taosha.fan@u.northwestern.edu}, \email{t-murphey@northwestern.edu}}

\maketitle              

\begin{abstract}
	\vspace{-1em}
	\sloppy	In this paper, we generalize proximal methods that were originally designed for convex optimization on normed vector space to non-convex pose graph optimization (PGO) on special Euclidean groups, and show that our proposed generalized proximal methods for PGO converge to first-order critical points. Furthermore, we propose methods that significantly accelerate the rates of convergence almost without loss of any theoretical guarantees.  In addition, our proposed methods can be easily distributed and parallelized with no compromise of efficiency. The efficacy of this work is validated through implementation on simultaneous localization and mapping (SLAM) and distributed 3D sensor network localization, which indicate that our proposed methods are a lot faster than existing techniques to converge to sufficient accuracy for practical use.
	\vspace{-0.5em}
\end{abstract}

\section{Introduction}
{Pose graph optimization} (PGO) estimates a number of unknown poses from noisy relative measurements, in which we associate each pose with a vertex and each measurement with an edge of a graph. PGO has important applications in a number of areas, for example, simultaneous localization and mapping (SLAM) in robotics \cite{cadena2016past}, structural analysis of biological macromolecules in cryo-electron microscopy \cite{singer2011three}, sensor network localization in distributed sensing \cite{tron2009distributed}, etc.

In the last twenty years, a number of PGO methods have been developed, which are either first-order optimization methods \cite{olson2006fast,grisetti2009nonlinear,tron2009distributed} or second-order optimization methods \cite{kaess2012isam2,rosen2014rise,kuemmerle11icra,rosen2016se}. In general, first-order PGO methods typically converge slowly when close to critical points, and thus, second-order PGO methods are preferable in most applications. In spite of this, second-order PGO methods have to continuously solve linear systems to evaluate descent directions, which is difficult to distribute and parallelize, and can be time-consuming for large-scale optimization problems \cite{kaess2012isam2,rosen2014rise,kuemmerle11icra,rosen2016se,fan2019iros}.  

In optimization and applied mathematics, there are a number of algorithms to accelerate the rates of convergence of  first-order optimization methods \cite{nesterov1983method,nesterov2013introductory,ghadimi2016accelerated,jin2018accelerated,li2015accelerated}. Nevertheless, most of existing accelerated first-order optimization methods \cite{nesterov1983method,nesterov2013introductory,ghadimi2016accelerated,jin2018accelerated,li2015accelerated} rely on proximal methods \cite{parikh2014proximal} and need a proximal operator that is also an upper bound of the objective function, and such a proximal operator, though exists, it is usually unclear for PGO. In addition, it is common in first-order PGO methods to formulate PGO as optimization on special Euclidean groups, and update pose estimates using Riemannian instead of Euclidean gradients \cite{olson2006fast,grisetti2009nonlinear,tron2009distributed}, whereas in general accelerated first-order optimization methods \cite{nesterov1983method,nesterov2013introductory,ghadimi2016accelerated,jin2018accelerated,li2015accelerated} only apply to optimization on normed vector space and are inapplicable for optimization using Riemannian gradients. As a result, it is in strictly limited to accelerate existing first-order PGO methods \cite{olson2006fast,grisetti2009nonlinear,tron2009distributed} with \cite{nesterov1983method,nesterov2013introductory,ghadimi2016accelerated,jin2018accelerated,li2015accelerated}.

In this paper, we generalize proximal methods \cite{parikh2014proximal} that were originally designed for convex optimization on normed vector space to non-convex PGO on special Euclidean groups, and show that our proposed methods converge to first-order critical points. Different from existing first-order PGO methods \cite{olson2006fast,grisetti2009nonlinear,tron2009distributed}, our proposed methods do not rely on Riemannian gradients to update pose estimates and there is no need to perform line search to guarantee convergence. Instead, our proposed methods update pose estimates by solving optimization sub-problems in closed form. Furthermore, we present methods that significantly accelerate the rates of convergence using \cite{nesterov1983method,nesterov2013introductory} with no loss of theoretical guarantees. To  our knowledge, neither proximal methods nor accelerated first-order methods for PGO have been presented before. In addition, our proposed methods can be easily distributed and parallelized without compromise of efficiency. In spite of being first-order PGO methods, our proposed methods are empirically several times faster than second-order PGO methods to converge to modest accuracy that is sufficient for practical use. In cases when higher accuracy is required, our proposed methods can  be combined with second-order PGO methods \cite{kaess2012isam2,rosen2014rise,kuemmerle11icra,rosen2016se,fan2019iros} to improve the overall performance.

The rest of this paper is organized as follows. \cref{section::notation} introduces notations that are used throughout this paper. \cref{section::gpm} reformulates proximal methods in a more general way that is used in this paper to solve PGO. \cref{section::formulation} formulates and simplifies PGO. \cref{section::relax} proposes a generalized proximal operator that is also an upper bound of PGO, which is fundamental to our proposed methods. \cref{section::acc,section::fast} present unaccelerated and accelerated generalized proximal methods for PGO, respectively, which is the major contribution of this paper. \cref{section::experiments} implements our proposed methods on SLAM and distributed 3D sensor network localization, and makes comparisons with $\sesync$ \cite{rosen2016se}. The conclusions are made in \cref{section::conclusion}

\vspace{-0.5em}
\section{Notation}\label{section::notation}
$\R$ denotes the sets of real numbers; $\R^{m\times n}$ and $\R^n$ denote the sets of $m\times n$ matrices and $n\times 1$ vectors, respectively; and $SO(d)$ and $SE(d)$ denote the sets of special orthogonal groups and special Euclidean groups, respectively. For a matrix $X\in \R^{m\times n}$, the notation $[X]_{ij}$ denotes the $(i,\,j)$-th entry or $(i,\,j)$-th block of $X$. The notation $\|\cdot\|$ denotes the Frobenius norm of matrices and vectors. For symmetric matrices $Y,\, Z\in \R^{n\times n}$, $Y\succeq Z$ (or $Z\preceq Y$) and $Y\succ Z$ (or $Z\prec Y$) mean that $Y-Z$ is positive semidefinite and positive definite, respectively. If $F:\R^{m\times n}\rightarrow\R $ is a function, $\mathcal{M}\subset \R^{m\times n}$ is a manifold and $X\in \mathcal{M}$, the notation $\nabla F(X)$ and $\mathrm{grad}\, F(X)$ denote the Euclidean and Riemannian gradients, respectively. 

\section{Generalized Proximal Methods}\label{section::gpm}
For an optimization problem 
\begin{equation}
\nonumber
\min_{X\in\XX} F(X),
\end{equation}
in which $\mathcal{X}$ is a closed set and $F:\XX\rightarrow \R$ is a function with Lipschitz smooth gradient $\nabla F(X)$ for a scalar $L>0$ such that
\begin{equation}\label{eq::L}
F(Y)\leq F(X) + \nabla F(X)^\transpose (Y-X)+\frac{L}{2}\|Y-X\|^2,
\end{equation}
then the proximal operator of the first-order approximation at $X^{(k)}\in \XX$ is defined to be \cite{parikh2014proximal}
\begin{equation}\label{eq::prox}
X^{(k+1)}=\arg\min_{X\in \XX} F(X^{(k)}) + \nabla F(X^{(k)})^\transpose\big(X-X^{(k)}\big)+\frac{L}{2}\|X-X^{(k)}\|^2,
\end{equation}
from which it can be concluded that $F(X^{(k+1)})\leq F(X^{(k)})$. An optimization algorithm using \cref{eq::prox} to generate iterates $\{X^{(k)}\}$ is called the {proximal method}. Though originally designed for convex optimization \cite{parikh2014proximal,nesterov1983method,nesterov2013introductory}, proximal methods have been used to solve non-convex optimization problems and get quite good results \cite{ghadimi2016accelerated,li2015accelerated,jin2018accelerated}. 

From \cref{eq::prox}, a prerequisite of proximal methods is that there exists a positive scalar $L$ w.r.t. which $F(X)$ is Lipschitz smooth. In most cases, $L$ is unknown, and finding such a scalar $L$ can be time-consuming. Instead, if there exists a positive definite matrix $\mathrm{\Omega} $ such that
\begin{equation}\label{eq::omega}
F(Y)\leq F(X) + \nabla F(X)^\transpose (Y-X)+\frac{1}{2}(Y-X)^\transpose \mathrm{\Omega}(Y-X),
\end{equation}
we obtain an first-order approximation that is also an upper bound of $F(X)$ as
\begin{multline}\label{eq::gpro}
X^{(k+1)}=\arg\min_{X\in \XX} F(X^{(k)}) + \nabla F(X^{(k)})^\transpose\big(X-X^{(k)}\big)+\\
\frac{1}{2}(X-X^{(k)})^\transpose\mathrm{\Omega}(X-X^{(k)}).
\end{multline}
We term \cref{eq::gpro} as the generalized proximal operator and an optimization algorithm using the equation above to generate iterates $\{X^{(k)}\}$ as the {generalized proximal method}. For a number of optimization problems, finding a matrix $\mathrm{\Omega}$ satisfying \cref{eq::omega} is much easier than finding a scalar $L$ satisfying \cref{eq::L}. Even though it is possible to determine a scalar $L>0$ satisfying \cref{eq::L} as the greatest eigenvalue of $\mathrm{\Omega}$, it is still expected that \cref{eq::gpro} results in a better approximation and a tighter upper bound than \cref{eq::prox}.

In the following sections, we will propose generalized proximal methods using \cref{eq::gpro} to solve PGO. 

\vspace{-0.5em}
\section{Problem Formulation}\label{section::formulation}
In this section, we review PGO that can be formulated as a least-square optimization problem and be simplified to a compact quadratic form. It should be noted that both the least-square and quadratic formulations of PGO have been well addressed by Rosen \textit{et. al} in \cite{rosen2016se}, and due to space limitations, we only present the main results and interested readers can refer to \cite{rosen2016se} for a detailed introduction.

PGO estimates $n$ unknown poses $g_i\triangleq(R_i,\,t_i)\in SE(d)$ with $m$ noisy measurements of relative poses $g_{ij}\triangleq g_i^{-1}g_j\triangleq (R_{ij},\,t_{ij})\in SE(d)$. In PGO, the $n$ poses $g_i$ and  $m$ relative measurements $g_{ij}$ are described through a directed graph $\aGG\triangleq(\VV, \, \aEE)$ in which $\VV\triangleq\{1,\,\cdots,\, n\}$ and each index $i$ is associated with $g_i$, and $(i,\,j)\in \aEE\subset\VV\times\VV$ if and only if $g_{ij}$ exists. 
If we ignore the orientation of edges in $\EE$, an undirected graph $G\triangleq(\VV,\,\EE)$ is obtained. In the rest of this paper, it is assumed that $\aGG$ is weakly connected and $G$ is connected. Following \cite{rosen2016se}, we also assume that the $m$ measurements $(R_{ij},\,t_{ij})$ are random variables:
\begin{subequations}
	\begin{equation}
	\nt_{ij}=\underline{t}_{ij}+t_{ij}^\epsilon\quad\quad\quad t_{ij}^\epsilon\sim \mathcal{N}(\0,\, \tau_{ij}^{-1}\I),
	\end{equation}
	\begin{equation}\label{eq::tob}
	\,\nR_{ij}=\underline{R}_{ij}R_{ij}^\epsilon\quad\quad\;\;\;\; R_{ij}^\epsilon\sim \mathcal{L}(\I,\, \kappa_{ij}),\hphantom{dd}
	\end{equation}
\end{subequations}
in which $\underline{g}_{ij}\triangleq(\underline{R}_{ij},\,\underline{t}_{ij})\in SE(d)$ is the true (latent) value of $g_{ij}$, and $\mathcal{N}(\mu_t,\,\mathrm{\Sigma}_t)$ denotes the normal distribution with mean $\mu_t \in \R^d$ and covariance $0\preceq \mathrm{\Sigma}_t\in \R^{d\times d}$, and $\LL(\mu_{R},\,\kappa_R)$ denotes the isotropic Langevin distribution with mode $\mu_{R}\in SO(d)$ and concentration parameter $\kappa_R \geq 0$.

\sloppy From the perspective of maximum likelihood estimation\cite{rosen2016se}, PGO can be formulated as a least square optimization problem on $SE(d)^n$
\begin{equation}\label{eq::obj}
\min_{\substack{R_i\in SO(d),\,t_i\in R^d,\\ i=1,\,\cdots,\,n}}\sum_{(i,\,j)\in\aEE}\frac{1}{2}\left(\kappa_{ij}\cdot\|R_i\nR_{ij}-R_{j}\|^2+
\tau_{ij}\cdot\|R_{i}\nt_{ij}+t_i-t_j\|^2\right),
\end{equation}
in which $R_i\in SO(d)$ and $t_i\in \R^d$. A straightforward derivation further simplifies \cref{eq::obj} to 
\begin{equation}\label{eq::obj_quad}
\min_{X\in \R^{d\times n}\times SO(d)^n} F(X)\triangleq\dfrac{1}{2}\trace(X \nM  X^\transpose)
\end{equation}
in which $F(X)$ is a quadratic function  and $X\triangleq\begin{bmatrix}
t_1 & \cdots & t_n & R_1 &\cdots & R_n
\end{bmatrix}\in \R^{d\times n}\times SO(d)^n\subset\R^{d\times(d+1)n}$. For $F(X)$ of \cref{eq::obj_quad}, $\nM\in \R^{(d+1)n\times(d+1)n}$ is a positive-semidefinite matrix
\begin{equation}\label{eq::M}
\nM\triangleq \begin{bmatrix}
L(W^\tau) & \widetilde{V}\\
\widetilde{V}^\transpose & L(\widetilde{G}^\rho)+\widetilde{\mathrm{\Sigma}}
\end{bmatrix}\in \R^{(d+1)n\times (d+1)n},
\end{equation}
in which $L(W^\tau)\in \R^{n\times n}$, $L(\widetilde{G}^\rho)\in \R^{dn\times{dn}}$, $\widetilde{V}\in \R^{n\times dn}$ and $\widetilde{\mathrm{\Sigma}}=\diag\{\widetilde{\mathrm{\Sigma}}_i,\,\cdots,\,\widetilde{\mathrm{\Sigma}}_n\}\in \R^{dn\times dn}$ are sparse matrices defined as Eqs. (13) to (16) in \cite[Section 4]{rosen2016se}.

In the next section, we will propose a generalized proximal operator of \cref{eq::obj,eq::obj_quad} whose minimization is  $n$ independent optimization problems on $SE(d)$, and thus can be efficiently solved, which is fundamental to our proposed methods for PGO.

\vspace{-0.5em}
\section{The Generalized Proximal Operator for PGO
}\label{section::relax}
In this section, we propose  an upper bound of PGO, and show that the resulting upper bound is a generalized proximal operator of the first-order approximation of PGO. 

For any matrices $A$ and $B$ of the same size, it is known that
\vspace{-0.1em}
\begin{equation}\label{eq::diff}
\frac{1}{2}\|A-B\|^2 = \min_{P\in \R^{m\times n}} \|A-P\|^2+\|B-P\|^2,
\vspace{-0.75em}
\end{equation}
the unique optimal solution to which is $P=\dfrac{1}{2}A+\dfrac{1}{2}B$. As a result of \cref{eq::diff}, if we introduce $m$ pairs of extra variables $P_{ij}\in \R^{d\times d}$ and $p_{ij}\in \R^d$ for each $(i,\,j)\in \aEE$ to \cref{eq::obj}, an upper bound of PGO is obtained as 
\vspace{-0.5em}
\begin{multline}\label{eq::objn}
\min_{\substack{R_i\in SO(d),\,t_i\in \R^d,\\ i=1,\,\cdots,\,n}}\,\sum\limits_{(i,\,j)\in\aEE}\left(\kappa_{ij}\cdot\|R_i\nR_{ij}-P_{ij}\|^2+\tau_{ij}\cdot\|R_{i}\nt_{ij}+t_i-p_{ij}\|^2+\vphantom{\kappa_{ij}\cdot\|R_j-P_{ij}\|^2+\tau_{ij}\cdot\|t_j-p_{ij}\|^2}\right.\\[-0.5em]
\left.\vphantom{\kappa_{ij}\cdot\|R_i\nR_{ij}-P_{ij}\|^2+\tau_{ij}\cdot\|R_{i}\nt_{ij}+t_i-p_{ij}\|^2}\kappa_{ij}\cdot\|R_j-P_{ij}\|^2+\tau_{ij}\cdot\|t_j-p_{ij}\|^2\right).
\end{multline}
\vspace{-0.5em}
If $P_{ij}$ and $p_{ij}$ are chosen as 
\begin{equation}\label{eq::P}
P_{ij}=\frac{1}{2}\Rk{k}_{i}\nR_{ij} + \frac{1}{2}\Rk{k}_{j},\quad\quad p_{ij}=\frac{1}{2}\Rk{k}_{i}\nt_{ij}+\frac{1}{2}\tk{k}_i + \frac{1}{2}\tk{k}_j,
\end{equation}
with $(\Rk{k}_{i},\,\tk{k}_i)\in SE(d)$ and $i=1,\,\cdots,\,n$, then \cref{eq::obj} and \cref{eq::objn} attain the same objective value at $\Rk{k}_{i}$ and $\tk{k}_i$. As a matter of fact, \cref{eq::objn} results in a generalized proximal operator of the first-order approximation of PGO as stated in \cref{theorem::first-order}.

\begin{theorem}\label{theorem::first-order}
	Let $P_{ij}$ and $p_{ij}$ are chosen as \cref{eq::P} with $(\Rk{k}_{i},\,\tk{k}_i)\in SE(d)$ and $i=1,\,\cdots,\,n$. Then, there exists a constant matrix $0\preceq\nH\in \R^{(d+1)n\times(d+1)n}$ such that \cref{eq::objn} is equivalent to
	\vspace{-0.25em}
	\begin{multline}\label{eq::obj_first}
	\min_{X\in \R^{d\times n}\times SO(d)^n} F(\Xk{k})+\trace\left(({X-\Xk{k}})^\transpose{\nabla F(\Xk{k})}\right)+\\[-0.15em]
	\frac{1}{2}\trace\left(\big(X-\Xk{k}\big) \nH\big(X-\Xk{k}\big)^\transpose\right)
	\end{multline}
	\vspace{-0.25em}
	in which $\Xk{k}=\begin{bmatrix}
	\tk{k}_1 & \cdots & \tk{k}_n & \Rk{k}_1 &\cdots & \Rk{k}_n
	\end{bmatrix}\in \R^{d\times n}\times SO(d)^n$, $F(\Xk{k})=\dfrac{1}{2}\trace(\Xk{k} \nM {\Xk{k}}^\transpose)$ and $\nabla F(\Xk{k})= \Xk{k}\nM$.
\end{theorem}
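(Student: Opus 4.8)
The plan is to freeze $P_{ij}$ and $p_{ij}$ at the values in \cref{eq::P}, denote by $\Phi(X)$ the resulting objective of \cref{eq::objn}, and then match $\Phi$ term-by-term against the right-hand side of \cref{eq::obj_first}, treating $\Phi$ as a quadratic function on the \emph{full} ambient space $\R^{d\times(d+1)n}$ (not merely on $\R^{d\times n}\times SO(d)^n$). The first thing I would record is that $\Phi$ is a sum of squared Frobenius norms with nonnegative weights $\kappa_{ij},\tau_{ij}$; hence its second-order part couples the columns of $X$ only through Euclidean inner products and can be written as $\tfrac12\trace(X\nH X^\transpose)$ with $0\preceq\nH\in\R^{(d+1)n\times(d+1)n}$. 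Moreover, the frozen $P_{ij},p_{ij}$ enter $\Phi$ only through terms of degree $\le 1$ in $X$, so they leave the Hessian untouched; this is what makes $\nH$ a genuine \emph{constant} matrix, independent of the iterate $\Xk{k}$.

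Because $\Phi$ is quadratic it equals its own second-order Taylor polynomial about $\Xk{k}$ exactly,
\begin{equation}
\nonumber
\Phi(X)=\Phi(\Xk{k})+\trace\!\left((X-\Xk{k})^\transpose\nabla\Phi(\Xk{k})\right)+\tfrac12\trace\!\left((X-\Xk{k})\nH(X-\Xk{k})^\transpose\right),
\end{equation}
so the whole statement reduces to the two identities $\Phi(\Xk{k})=F(\Xk{k})$ and $\nabla\Phi(\Xk{k})=\nabla F(\Xk{k})=\Xk{k}\nM$. The first is just the tangency of the bound already noted before the theorem: with \cref{eq::P} the frozen $P_{ij},p_{ij}$ are exactly the minimizers $\tfrac12 A+\tfrac12 B$ of \cref{eq::diff}, so each edge term of $\Phi$ coincides with the corresponding edge term of \cref{eq::obj} at $\Xk{k}$.

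For the gradient identity I would use tangency rather than a brute-force expansion. Since \cref{eq::diff} expresses $\tfrac12\|A-B\|^2$ as a minimum over $P$, any fixed choice of $P_{ij},p_{ij}$ makes each edge term of $\Phi$ no smaller than the corresponding edge term of \cref{eq::obj}; taking $A=R_i\nR_{ij},\,B=R_j$ on each rotation block and $A=R_i\nt_{ij}+t_i,\,B=t_j$ on each translation block yields $\Phi(X)\ge F(X)$ for \emph{every} $X\in\R^{d\times(d+1)n}$, because \cref{eq::diff} holds for arbitrary matrices. Combined with $\Phi(\Xk{k})=F(\Xk{k})$, this makes $\Xk{k}$ a global minimizer of the smooth map $\Phi-F$ over the open set $\R^{d\times(d+1)n}$, whence $\nabla\Phi(\Xk{k})-\nabla F(\Xk{k})=\0$. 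Substituting both identities into the Taylor expansion above produces \cref{eq::obj_first} and completes the argument.

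The step I expect to be the crux is this gradient match, and the delicate point is that it must be performed in the ambient Euclidean space: only because \cref{eq::diff} is a pointwise inequality for \emph{all} matrices does $\Xk{k}$ sit in the interior of the domain of $\Phi-F$, which is what lets me conclude the \emph{Euclidean} gradient vanishes (a Riemannian tangency argument restricted to $SO(d)^n$ would control only the component tangent to the manifold). As a closing bookkeeping remark, $\nR_{ij}\in SO(d)$ gives $\|R_i\nR_{ij}\|^2=\|R_i\|^2$, so the second-order part of $\Phi$ splits across vertices and $\nH$ is block diagonal in the partition $(t_i,R_i)$; although this is not needed for the statement, it is precisely what makes minimizing \cref{eq::objn} decouple into $n$ independent problems on $SE(d)$, as promised at the end of \cref{section::formulation}.
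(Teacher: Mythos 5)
Your argument is correct, and it reaches the conclusion by a genuinely different route from the paper's. The paper proves \cref{theorem::first-order} by brute force: it writes down the Euclidean gradients of each edge term $F_{ij}^R$ and $F_{ij}^t$ explicitly and verifies, by expanding the squares in \cref{eq::proxFR,eq::proxFt}, that each frozen edge term of \cref{eq::objn} literally equals the corresponding value-plus-linear-plus-quadratic expression, then sums over edges and reads off the diagonal blocks of $\nOmega$ in \cref{eq::Otau,eq::Orho,eq::Onu}. You instead isolate the three facts that actually carry the statement --- the Hessian of the frozen objective is a constant PSD matrix because $P_{ij},p_{ij}$ only enter affinely; the values match at $\Xk{k}$ because \cref{eq::P} picks the minimizers in \cref{eq::diff}; and the Euclidean gradients match because $\Phi-F\ge 0$ on the whole ambient space with equality at the interior point $\Xk{k}$ --- and the quadratic Taylor identity does the rest. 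Your observation that the tangency must be run in $\R^{d\times(d+1)n}$ rather than on $SO(d)^n$ is exactly the right caution, and it is legitimate here because both the inequality from \cref{eq::diff} and the identity between \cref{eq::obj} and the quadratic form \cref{eq::obj_quad} hold off the manifold (the latter uses only $\nR_{ij}\in SO(d)$, not orthogonality of the variable). What your route does not deliver, and what the paper's computation buys, is the explicit block structure $\nOmega^\tau,\nOmega^\rho,\nOmega^\nu$ of \cref{eq::Otau,eq::Orho,eq::Onu}: those formulas are consumed later when $\nGamma$ is assembled and the per-vertex subproblems \cref{eq::obj_sub1} are solved in closed form, so in the context of the full paper the explicit expansion is not wasted effort. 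Your closing remark recovers the block-diagonal decoupling qualitatively, but a reader implementing $\gpm$ would still need to carry out the expansion you avoided.
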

\begin{proof}
	See \cite[Appendix B.1]{fan2019gpm}.
\end{proof}

It should be noted that \cref{eq::obj_first} is an upper bound of PGO as well as \cref{eq::obj,eq::obj_quad}, in which $\nM$ and $\nOmega$ are closely related as follows.

\begin{theorem}\label{theorem::bound}
	Let $\nM$ and $\nOmega$ be defined as \cref{eq::obj_quad,eq::obj_first}, respectively. Then,
	\vspace{-0.1em}
	\begin{enumerate}[label={(\alph*)}]
		\item  $\nOmega\succeq \nM$;\label{item::bound_1}\vspace{0.2em}
		\item  for any $c\in \R$, $c\cdot\I \succeq \nOmega$ if $\dfrac{c}{2}\cdot \I \succeq \nM$.\label{item::bound_2}
	\end{enumerate}
\end{theorem}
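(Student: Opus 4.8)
The plan is to handle the two claims by different routes. For \ref{item::bound_1} I would use that the objective \cref{eq::objn}, with $P_{ij}$ and $p_{ij}$ frozen at the values \cref{eq::P}, is simultaneously (i) a pointwise upper bound of $F$ and (ii) the quadratic appearing in \cref{eq::obj_first}. Claim (i) is immediate from \cref{eq::diff} read as an inequality, $\frac12\|A-B\|^2\le\|A-P\|^2+\|B-P\|^2$ for any fixed $P$, which holds for all $A,B$ and hence makes \cref{eq::objn} dominate \cref{eq::obj} for every $X\in\R^{d\times(d+1)n}$, not merely on the feasible set. Claim (ii) is exactly Theorem 1, which identifies the second-order term of this quadratic with $\nOmega$. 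Since $F(X)=\frac12\trace(X\nM X^\transpose)$ is itself quadratic, its expansion about $\Xk{k}$ is exact, with second-order term governed by $\nM$. Subtracting the two quadratics gives $\frac12\trace\big((X-\Xk{k})(\nOmega-\nM)(X-\Xk{k})^\transpose\big)\ge0$ for all $X$; as $X-\Xk{k}$ ranges over all of $\R^{d\times(d+1)n}$ and $\trace(\Delta K\Delta^\transpose)\ge0$ for every $\Delta$ is equivalent to $K\succeq0$, this yields $\nOmega\succeq\nM$.

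For \ref{item::bound_2} the key is a structural identity: $\nOmega=2B$, where $B$ denotes the block-diagonal part of $\nM$ that keeps, for each vertex $i$, only the $(d+1)\times(d+1)$ block coupling its own $(t_i,R_i)$ and discards every between-vertex block. I would establish this straight from the construction behind Theorem 1. Once $P_{ij},p_{ij}$ are frozen, every squared term in \cref{eq::objn} depends on the variables of a single vertex, so the resulting quadratic carries no cross-vertex coupling and $\nOmega$ is block-diagonal over vertices. Moreover, passing from \cref{eq::obj} to \cref{eq::objn} replaces the coefficients $\tfrac{\kappa_{ij}}2,\tfrac{\tau_{ij}}2$ by $\kappa_{ij},\tau_{ij}$, which doubles each surviving per-vertex contribution: the diagonal $t_i$ term, the diagonal $R_i$ term (including the $\tau_{ij}\nt_{ij}\nt_{ij}^\transpose$ pieces collected in $\widetilde{\mathrm{\Sigma}}$), and the $t_i$--$R_i$ cross term. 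Hence each diagonal block of $\nOmega$ equals twice the corresponding diagonal block of $\nM$.

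Granting $\nOmega=2B$, claim \ref{item::bound_2} reduces to a compression bound. Each block $B_{ii}$ is a principal submatrix of $\nM$, so by Cauchy interlacing (equivalently, by restricting the Rayleigh quotient of $\nM$ to vertex $i$'s coordinate subspace) $\lambda_{\max}(B_{ii})\le\lambda_{\max}(\nM)$. Thus $\tfrac{c}{2}\I\succeq\nM$ forces $\tfrac{c}{2}\I\succeq B_{ii}$ for every $i$, hence $\tfrac{c}{2}\I\succeq B$, and multiplying by $2$ gives $c\I\succeq2B=\nOmega$. I expect the main obstacle to be pinning down the identity $\nOmega=2B$ rigorously: one must verify both the decoupling and the exact factor of two uniformly across the translation, rotation, and translation--rotation cross blocks, which means unwinding the construction of Theorem 1 rather than manipulating $\nM$ and $\nOmega$ abstractly. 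It is worth noting that \ref{item::bound_2} is genuinely weaker than $\nOmega\preceq2\nM$ (which is false, since $\nM$ may be singular while $2B=\nOmega$ is positive definite); it is only a bound on $\lambda_{\max}$, which is precisely what the compression argument delivers.
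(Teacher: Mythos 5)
Your proposal is correct and follows essentially the same route as the paper: part \ref{item::bound_1} is obtained there by comparing the upper-bound quadratic \cref{eq::obj_first} with the exact expansion of the quadratic $F$ about $\Xk{k}$ (your extra remark that the domination holds on all of $\R^{d\times(d+1)n}$ is a detail the paper leaves implicit), and part \ref{item::bound_2} is proved there by reordering the variables so that $\nOmega$ becomes block diagonal with blocks equal to the corresponding principal $(d+1)\times(d+1)$ minors of $2\nM$ --- your identity $\nOmega=2B$ --- followed by Courant--Fischer, i.e.\ your interlacing step. No gaps; your closing observation that only $\lambda_{\max}$ is controlled (not $\nOmega\preceq 2\nM$) accurately reflects what the paper's argument delivers.
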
 
\begin{proof}
	See \cite[Appendix B.2]{fan2019gpm}.
\end{proof}

As a result of \cref{theorem::first-order,theorem::bound}, \cref{eq::obj_first} is a generalized proximal operator and an upper bound of PGO, which suggests the possibility of generalized proximal methods to solve PGO. It should be noted that only the Euclidean gradient $\nabla F(\Xk{k})$ is involved in \cref{eq::obj_first}, and as a result, we might also accelerate generalized proximal methods using \cite{nesterov1983method,nesterov2013introductory}. In \cref{section::fast}, we will propose generalized proximal methods for PGO, and in \cref{section::acc}, we will further accelerate our proposed generalized proximal methods for PGO, which is the major contribution of this paper.

\vspace{-0.5em}
\section{Generalized Proximal Methods for PGO}\label{section::fast}
In this section, we propose two generalized proximal methods to solve PGO and show that our proposed methods for PGO converge to first-order critical points. 
\vspace{-1em}
\subsection{The $\gpm$ Method}
\vspace{-2em}
\setlength{\textfloatsep}{1em}
\begin{algorithm}[!htpb]
	\caption{The $\gpm$ Method}
	\label{algorithm::pm}
	\begin{algorithmic}[1]
		\State\textbf{Input}: An initial iterate $X^{(0)}=\begin{bmatrix}
		t_1^{(0)} & \cdots & t_n^{(0)} & R_1^{(0)} & \cdots & R_n^{(0)}
		\end{bmatrix}\in\R_d^n\times SO(d)^n$, and the maximum number of iterations $N$.
		\State\textbf{Output}: A sequence of iterates $\{X^{(k)}\}$.\vspace{0.2em} 
		\Function { $\gpm$}{$X^{(0)},\, N$}
		\For{$k=0\rightarrow N-1$}
		\vspace{0.25em}
		\State $\begin{bmatrix}
		\theta_1^{(k)} & \cdots &\theta_n^{(k)} 
		\end{bmatrix}\leftarrow X^{(k)}\mathrm{\Theta}$
		\vspace{0.25em}
		\For{$i=1\rightarrow n$}
		\vspace{0.2em}
		\State $R_i^{(k+1)}\leftarrow\arg\max\limits_{R_i\in SO(d) }\trace(R_i^\transpose\theta_i^{(k)})$
		\EndFor
		\vspace{0.25em}
		\State $R^{(k+1)}\leftarrow\begin{bmatrix}
		R_1^{(k+1)} & \cdots & R_n^{(k+1)}
		\end{bmatrix}$
		\vspace{0.25em}
		\State $t^{(k+1)}\leftarrow  R^{(k+1)}\mathrm{\Xi}+X^{(k)}\mathrm{\Psi}$
		\vspace{0.25em}
		\State $X^{(k+1)}\leftarrow\begin{bmatrix}
		t^{(k+1)} &  R^{(k+1)}
		\end{bmatrix}$
		\EndFor
		\State \textbf{return} $\{X^{(k)}\}$
		\EndFunction
	\end{algorithmic}
\end{algorithm}
\vspace{-0.5em}

According to \cref{theorem::bound}, it is known that $\nH\succeq \nM$, and thus, we obtain a series of upper bounds of PGO:
\begin{multline}\label{eq::obj_first2}
\min_{X\in \R^{d\times n}\times SO(d)^n} G(X|X^{(k)})\triangleq F(\Xk{k})+\trace\left(({X-\Xk{k}})^\transpose{\nabla F(\Xk{k})}\right)+\\\frac{1}{2}\trace\left(\big(X-\Xk{k}\big) \nGamma\big(X-\Xk{k}\big)^\transpose\right),
\end{multline}
in which $\nGamma = \nH+\alpha\cdot \I\succeq \nM$ and $\alpha\geq 0$. From \cref{eq::obj_first,eq::objn}, a straightforward mathematical manipulation indicates that $\nGamma$ in \cref{eq::obj_first2} takes the form as $\nGamma\triangleq\begin{bmatrix}
\rGamma^\tau & {\nGamma^\nu}{{}^\transpose}\\
\nGamma^\nu  & \nGamma^\rho\phantom{{}^\transpose}
\end{bmatrix}$ with $\rGamma^{\tau}=\diag\{\rGamma_1^\tau,\,\cdots,\,\rGamma_n^\tau\}\in \R^{n\times n}$, $\nGamma^\rho=\diag\{\nGamma_1^\rho,\,\cdots,\,\nGamma_n^\rho\}\in \R^{dn\times dn}$ and $\nGamma^\nu=\diag\{
\nGamma_1^\nu,\,  \cdots ,\, \nGamma_n^\nu \}\in \R^{dn\times n}$, in which 
\begin{subequations}
	\begin{equation}\label{eq::Htau}
	\rGamma_i^\tau = \alpha+\sum_{(i,\,j)\in\EE}2\cdot\tau_{ij} \in \R,
	\end{equation}
	\begin{equation}\label{eq::Hrho}
	\nGamma_i^\rho = \alpha\cdot \I +\sum\limits_{(i,\,j)\in \EE}2\cdot \kappa_{ij}\cdot\I+\sum\limits_{(i,\,j)\in \aEE}2\cdot\tau_{ij}\cdot\nt_{ij}\nt_{ij}^\transpose\in \R^{d\times d},
	\end{equation}
	\begin{equation}\label{eq::Hnu}
	\nGamma_i^\nu = \sum\limits_{(i,\,j)\in \aEE}2\cdot\tau_{ij}\cdot \nt_{ij}\in \R^{d}.
	\end{equation}
\end{subequations}
For simplicity and clarity, we rewrite $\nabla F(\Xk{k})$ in \cref{eq::obj_first} as  $\nabla F(\Xk{k})=\begin{bmatrix}
\lgamma_1^\tau & \cdots &\lgamma_n^\tau & \lgamma_1^\rho  & \cdots & \lgamma_n^\rho
\end{bmatrix}\in\R^{d \times (d+1)n}$, in which $\lgamma_i^\tau \in \R^d$ and $\lgamma_i^\rho \in \R^{d\times d}$ are Euclidean gradients w.r.t. $t_i$ and $R_i$, respectively.  Substituting \cref{eq::Hnu,eq::Hrho,eq::Htau} into \cref{eq::obj_first} and simplifying the resulting equation, we obtain
\begin{multline}\label{eq::obj_sub}
\min_{\substack{R_i\in SO(d),\,t_i\in \R^d,\\ i=1,\,\cdots,\,n}}\, F(\Xk{k})+\sum\limits_{i\in \VV}\Big(\frac{1}{2}\trace\left((R_i-\Rk{k}_{i})\nGamma_i^\rho (R_i-\Rk{k}_{i})^\transpose)\right)+\\\nGamma_i^\nu{}^\transpose (R_i-\Rk{k}_{i})^\transpose(t_i-\tk{k}_i) + \frac{1}{2} (t_i-\tk{k}_i)^\transpose \rGamma_i^\tau(t_i-\tk{k}_i)+\\
\trace\left(\lgamma_i^\rho{}^\transpose(R_i-\Rk{k}_i)\right)+\lgamma_i^\tau{}^\transpose(t_i -\tk{k}_i)\Big),
\end{multline}
which is equivalent to  $n$ independent optimization problems on $(R_i,\,t_i)\in SE(d)$:
\vspace{-0.5em}
\begin{multline}\label{eq::obj_sub1}
\min_{\substack{R_i\in SO(d),\,t_i\in \R^d}}\frac{1}{2}\trace\left((R_i-\Rk{k}_{i})\nGamma_i^\rho (R_i-\Rk{k}_{i})^\transpose)\right)+\\
\nGamma_i^\nu{}^\transpose (R_i-\Rk{k}_{i})^\transpose(t_i-\tk{k}_i) + \frac{1}{2}\rGamma_i^\tau (t_i-\tk{k}_i)^\transpose (t_i-\tk{k}_i)+\\
\trace\left(\lgamma_i^\rho{}^\transpose(R_i-\Rk{k}_i)\right)+
\lgamma_i^\tau{}^\transpose(t_i -\tk{k}_i).
\end{multline}
Furthermore, if $R_i \in SO(d)$ is given, $t_i\in \R^d$ can be recovered as 
\begin{equation}\label{eq::tsol}
t_i =\tk{k}_i- R_i\nGamma_i^\nu\rGamma_i^\tau{}^{-1}+ \big(\Rk{k}_i\nGamma_i^\nu-\lgamma_i^\tau\big)\rGamma_i^\tau{}^{-1}. 
\end{equation}
Substituting \cref{eq::tsol} into \cref{eq::obj_sub1} to cancel out $t_i$ and applying $R_iR_i^\transpose=\I$ to simplify the resulting equation, we obtain
\begin{equation}\label{eq::obj_R}
\Rk{k+1}_i=\arg \max_{R_i\in SO(d)} \trace(R_i^\transpose\thk{k}_i),
\vspace{-0.5em}
\end{equation}
in which 
\begin{equation}\label{eq::theta}
\thk{k}_i=\Rk{k}_{i}(\nGamma_i^\rho-\nGamma_i^\nu{\rGamma_i^\tau}{}^{-1}\nGamma_i^\nu{}^\transpose)+\lgamma_i^\tau\rGamma_i^\tau{}^{-1}\nGamma_i^\nu{}^{\transpose}-\lgamma_i^\rho\in \R^{d\times d}.
\end{equation}
From $\nabla F(\Xk{k})=\Xk{k} \nM$ and \cref{eq::M,eq::obj_first2}, we might matricize \cref{eq::theta} as 
\begin{equation}
\nonumber
\thk{k}  = \Xk{k} \mathrm{\Theta},
\end{equation}
in which $\thk{k} =\begin{bmatrix}
\thk{k}_1 & \cdots & \thk{k}_n
\end{bmatrix}\in \R^{d\times dn}$, and $\mathrm{\Theta}\in \R^{(d+1)n\times dn}$ is a sparse matrix
\begin{equation}\label{eq::theta2}
\mathrm{\Theta} = \begin{bmatrix}
L(W^\tau)\\
\widetilde{V}^\transpose
\end{bmatrix}\rGamma^\tau{}^{-1}\nGamma^\nu{}^\transpose+
\begin{bmatrix}
\0\\
\nGamma^\rho-\nGamma^\nu\rGamma^\tau{}^{-1}\nGamma^\nu{}^\transpose
\end{bmatrix}
-\begin{bmatrix}
\widetilde{V}\\
L(\widetilde{G}^\rho)+\widetilde{\mathrm{\Sigma}}
\end{bmatrix}.
\end{equation}
Similarly, as a result of \cref{eq::M,eq::tsol,eq::obj_first2}, we obtain
\begin{equation}\label{eq::tsol2}
\tk{k+1} = \Rk{k+1}\mathrm{\Xi}+\Xk{k}\mathrm{\Psi},
\end{equation}
in which $\tk{k+1}=\begin{bmatrix}
\tk{k+1}_1 & \cdots & \tk{k+1}_n
\end{bmatrix}\in \R^{d\times n}$, $\Rk{k+1}=\begin{bmatrix}
\Rk{k+1}_1 & \cdots & \Rk{k+1}_n
\end{bmatrix}\in SO(d)^n\subset\R^{d\times dn}$, and $\mathrm{\Xi}\in \R^{dn\times n}$ and $\mathrm{\Psi}\in \R^{(d+1)n\times n}$ are sparse matrices with $\mathrm{\Xi}= - \nGamma^\nu \rGamma^{\tau}{}^{-1} $
and $\mathrm{\Psi}= \begin{bmatrix}
\rGamma^\tau-L(W^\tau) \\
\nGamma^\nu -\widetilde{V}^\transpose
\end{bmatrix}\rGamma^\tau{}^{-1}$,
respectively. 

It is obvious that \cref{eq::obj_sub1} is simplified to \cref{eq::obj_R}. From \cite{umeyama1991least}, if $\thk{k}_i\in \R^{d\times d}$ admits a singular value decomposition $\thk{k}_i=U_i\mathrm{\Sigma}_i V_i^\transpose$ in which $U_i$ and $V_i\in O(d)$ are orthogonal (but not necessarily special orthogonal) matrices, and $\mathrm{\Sigma}_i=\diag\{\sigma_1,\,\sigma_2,\,\cdots,\,\sigma_d\}\in \R^{d\times d}$ is a diagonal matrix, and $\sigma_1\geq\sigma_2\geq\cdots\geq \sigma_d\geq 0$ are singular values of $\theta_i$, then the optimal solution to \cref{eq::obj_R} is
\vspace{-0.25em}
\begin{equation}\label{eq::Rsol}
R_i=\begin{cases}
U_i\mathrm{\Sigma}^+ V_i^\transpose, & \det(U_iV_i^\transpose) > 0,\\
U_i\mathrm{\Sigma}^- V_i^\transpose, & \det(U_iV_i^\transpose) < 0,
\end{cases}
\vspace{-0.25em}
\end{equation}
in which $\mathrm{\Sigma}^+=\diag\{1,\,1,\,\cdots,\,1\}$ and $\mathrm{\Sigma}^-=\diag\{1,\,1,\,\cdots,\,-1\}$. If $d = 2$, the equation above is equivalent to the polar decomposition of $2\times 2$ matrices, and if $d=3$, there are fast algorithms for singular value decomposition of $3\times 3$ matrices \cite{mcadams2011computing}. In both cases of $d=2$ and $d=3$, \cref{eq::obj_R} can be efficiently solved. As long as $R_i\in SO(d)$ is known, we can further recover $t_i\in \R^d$ using \cref{eq::tsol2} so that a solution $(t_i,\,R_i)\in \R^d\times SO(d)$ to \cref{eq::obj_sub1} is obtained with which \cref{eq::obj_sub} is also solved.

Therefore, \cref{eq::obj_sub} only involves $n$ singular value decomposition to solve \cref{eq::obj_R} on $R_i\in SO(d)$, and a matrix-vector multiplication to retrieve $t_i\in \R^d$ using \cref{eq::tsol2}, which suggests the $\gpm$ method (\cref{algorithm::pm}). 
\vspace{-1em}
\subsection{The $\gpmo$ Method}

It is according to \cref{eq::obj_quad} that PGO can be reformulated as
\vspace{-0.5em} 
\begin{multline}
\min_{t\in \R^{d\times n},\, R\in SO(d)^n}\frac{1}{2} \trace\big(t L(W^\tau) t^\transpose\big) +\trace\big(\tilde{V} R^\transpose t \big)+\\
\frac{1}{2}\trace(RL(\widetilde{G}^\rho)R^\transpose) + \frac{1}{2}\trace(R\widetilde{\mathrm{\Sigma}}^\rho R^\transpose),
\end{multline}
in which $t=\begin{bmatrix}
t_1 & \cdots & t_n
\end{bmatrix}\in \R^{d\times n}$ and $R=\begin{bmatrix}
R_1 & \cdots & R_n
\end{bmatrix}\in SO(d)^n$. Following a similar procedure in \cite{rosen2016se}, if the rotation $R=\Rk{k+1}$ in the equation above is given, we can optimally recover the corresponding translation $t=\tk{k+1}$ as $\tk{k+1} = -\Rk{k+1} \widetilde{V}^\transpose L(W^\tau)^\dagger$,
which is further simplified to 
\vspace{-0.25em}
\begin{equation}\label{eq::tsolf}
\tk{k+1} = -\Rk{k+1} \widetilde{T}^\transpose \mathrm{\Omega} A^\transpose (A\mathrm{\Omega}A^\transpose)^\dagger.
\vspace{-0.25em}
\end{equation}
In \cref{eq::tsolf}, $A\in \R^{n\times m}$, $\mathrm{\Omega}\in \R^{m\times m}$ and $\widetilde{T}\in \R^{m\times dn}$ are sparse matrices that are defined as Eqs.(7), (22) and (23) in \cite[Sections 3 and 4]{rosen2016se}, respectively.

As a result, instead of computing each $t_i^{(k+1)}\in \R^d$ sub-optimally  using \cref{eq::tsol,eq::tsol2}, we might use \cref{eq::tsolf} to optimally recover $t^{(k+1)}\in \R^{d\times n}$ w.r.t. $R^{(k+1)}\in SO(d)^n$ as a whole. Furthermore, if $t^{(k+1)}$ is recovered by \cref{eq::tsolf}, we have $\lgamma_i^\tau =\0$ for all $i=1,\,\cdots,\,n$ in \cref{eq::obj_sub1}, and thus, only the Euclidean gradient $\lgamma^\rho=\begin{bmatrix}
\lgamma_1^\rho & \cdots & \lgamma_n^\rho
\end{bmatrix}=\nabla_R F(X^{(k)})$ w.r.t. $R^{(k)}$ needs to be computed, and then $\theta_i^{(k)}$ is simplified to
\vspace{-0.5em}
\begin{equation}
\nonumber
\theta_i^{(k)}=R_{i}^{(k)}(\nGamma_i^\rho-\nGamma_i^\nu{\rGamma_i^\tau}{}^{-1}\nGamma_i^\nu{}^\transpose)-\lgamma_i^\rho.
\vspace{-0.25em}
\end{equation}
Following a similar procedure to \cref{eq::theta2}, we obtain
\vspace{-0.25em}
\begin{equation}
\nonumber
\theta^{(k)}=\Xk{k}\mathrm{\Phi},
\vspace{-0.25em}
\end{equation}
in which $\theta^{(k)} =\begin{bmatrix}
\theta_1^{(k)} & \cdots & \theta_n^{(k)}
\end{bmatrix}\in \R^{d\times dn}$, and $\mathrm{\Phi}\in \R^{(d+1)n\times dn}$ is a sparse matrix 
\vspace{-0.25em}
\begin{equation}\label{eq::phi}
\mathrm{\Phi}=\begin{bmatrix}
\0\\
\nGamma^\rho-\nGamma^\nu\rGamma^\tau{}^{-1}\nGamma^\nu{}^\transpose
\end{bmatrix}
-\begin{bmatrix}
\widetilde{V}\\
L(\widetilde{G}^\rho)+\widetilde{\mathrm{\Sigma}}
\end{bmatrix}.
\end{equation}
From \cref{eq::tsolf,eq::phi}, we obtain $\mathsf{GPO-PGO^*}$ (\cref{algorithm::pm*}),  which always recovers the translation $t$ optimally w.r.t. $R$ and thus is expected to outperform \cref{algorithm::pm}.

\begin{algorithm}[tb]
	\caption{The $\gpmo$ Method}
	\label{algorithm::pm*}
	\begin{algorithmic}[1]
		\State\textbf{Input}: An initial iterate $x^{(0)}=\begin{bmatrix}
		t^{(0)} & R^{(0)}
		\end{bmatrix}\in\R^{d\times n}\times SO(d)^n$ in which $t^{(0)}=-R^{(0)} \widetilde{T}^\transpose \mathrm{\Omega} A^\transpose (A\mathrm{\Omega}A^\transpose)^\dagger $, and the maximum number of iterations $N$.
		\State\textbf{Output}: A sequence of iterates $\{X^{(k)}\}$.\vspace{0.2em} 
		\Function { $\gpmo$}{$X^{(0)},\, N$}
		\vspace{0.1em}
		\For{$k=0\rightarrow N-1$}
		\vspace{0.15em}
		\State $\begin{bmatrix}
		\theta_1^{(k)} & \cdots &\theta_n^{(k)} 
		\end{bmatrix}\leftarrow X^{(k)}\mathrm{\Phi}$
		\vspace{0.15em}
		\For{$i=1\rightarrow n$}
		\vspace{0.1em}
		\State $R_i^{(k+1)}\leftarrow\arg\max\limits_{R_i\in SO(d) }\trace(R_i^\transpose\theta_i^{(k)})$
		\EndFor
		\vspace{0.15em}
		\State $R^{(k+1)}\leftarrow\begin{bmatrix}
		R_1^{(k+1)} & \cdots & R_n^{(k+1)}
		\end{bmatrix}$
		\State $t^{(k+1)}\leftarrow-R^{(k+1)} \widetilde{T}^\transpose \mathrm{\Omega} A^\transpose (A\mathrm{\Omega}A^\transpose)^\dagger $
		\vspace{0.2em}
		\State $X^{(k+1)}\leftarrow\begin{bmatrix}
		& t^{(k+1)} & R^{(k+1)}
		\end{bmatrix}$
		\EndFor
		\vspace{0.2em}
		\State \textbf{return} $\{X^{(k)}\}$
		\EndFunction
	\end{algorithmic}
\end{algorithm}

It is important to establish whether $\gpm$ and $\gpmo$ solve PGO. Empirically, we observe in the experiments that our proposed methods always converge to the global optima if the noise magnitudes are below a certain threshold. Theoretically, we can provide guarantees that $\gpm$ and $\gpmo$ converge to first-order critical points. Note that existing first- and second-order PGO methods in general need to choose stepsize carefully to guarantee the convergence to first-order critical points, whereas there is no stepsize tuning involved in either $\gpm$ or $\gpmo$.
\begin{theorem}\label{theorem::opt}
	\sloppy	Let $\{X^{(k)}\}$ be a sequence of iterates that is generated by either $\gpm$ or $\gpmo$. Then, 
	\begin{enumerate}[label={(\alph*)}]
		\item $F(X^{(k)})$ is non-increasing;\label{item::opt_1}
		\vspace{0.1em}
		\item $F(X^{(k)})\rightarrow F^\infty$ as $k\rightarrow \infty$;\label{item::opt_2}
		\vspace{0.1em}
		\item $\|X^{(k+1)}-X^{(k)}\|\rightarrow 0$ as $k\rightarrow \infty$ if $\nGamma\succ \nM$;\label{item::opt_3}
		\vspace{0.1em}
		\item $\mathrm{grad}\; F(X^{(k)})\rightarrow \0$ as $k\rightarrow\infty$ if $\nGamma\succ \nM$;\label{item::opt_4}
		\vspace{0.1em}
		\item $\|X^{(k+1)}-X^{(k)}\|\rightarrow 0$ as $k\rightarrow \infty$ if $\alpha > 0$;\label{item::opt_5}
		\vspace{0.1em}
		\item $\mathrm{grad}\; F(X^{(k)})\rightarrow \0$ as $k\rightarrow\infty$ if $\alpha > 0$. \label{item::opt_6}
	\end{enumerate}
\end{theorem}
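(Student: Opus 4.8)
The plan is to read both $\gpm$ and $\gpmo$ as majorization--minimization schemes driven by the upper bound $G(X\mid\Xk{k})$ of \cref{eq::obj_first2}, using that $\Xk{k+1}$ is a \emph{global} minimizer of $G(\cdot\mid\Xk{k})$ over the product manifold $\MM\triangleq\R^{d\times n}\times SO(d)^n$ (the rotation subproblem \cref{eq::obj_R} is solved exactly by \cref{eq::Rsol} and the translation is then recovered in closed form). Two facts settle \ref{item::opt_1} and \ref{item::opt_2} at once: by item \ref{item::bound_1} of \cref{theorem::bound} we have $\nOmega\succeq\nM$, hence $\nGamma=\nOmega+\alpha\I\succeq\nM$ and $G(X\mid\Xk{k})\geq F(X)$ for all $X$; and by construction $G(\Xk{k}\mid\Xk{k})=F(\Xk{k})$. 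Chaining these,
\[
F(\Xk{k+1})\leq G(\Xk{k+1}\mid\Xk{k})\leq G(\Xk{k}\mid\Xk{k})=F(\Xk{k}),
\]
where the middle step is minimality of $\Xk{k+1}$, which is \ref{item::opt_1}; since $\nM\succeq0$ gives $F\geq0$, the monotone bounded sequence $\{F(\Xk{k})\}$ converges to some $F^\infty$, which is \ref{item::opt_2}.

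For \ref{item::opt_3} I would turn the descent into a quantitative estimate. As $F$ is quadratic its second-order expansion at $\Xk{k}$ is exact, so $G(X\mid\Xk{k})-F(X)=\tfrac12\trace\big((X-\Xk{k})(\nGamma-\nM)(X-\Xk{k})^\transpose\big)$. Combining this identity at $X=\Xk{k+1}$ with $G(\Xk{k}\mid\Xk{k})=F(\Xk{k})$ and minimality of $\Xk{k+1}$ gives
\[
F(\Xk{k})-F(\Xk{k+1})\geq\tfrac12\trace\big((\Xk{k+1}-\Xk{k})(\nGamma-\nM)(\Xk{k+1}-\Xk{k})^\transpose\big)\geq\tfrac{\lambda}{2}\|\Xk{k+1}-\Xk{k}\|^2,
\]
with $\lambda=\lambda_{\min}(\nGamma-\nM)>0$ when $\nGamma\succ\nM$. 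Summing over $k$ and telescoping against the convergent $\{F(\Xk{k})\}$ bounds $\sum_k\|\Xk{k+1}-\Xk{k}\|^2$, forcing $\|\Xk{k+1}-\Xk{k}\|\to0$.

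The technical heart is \ref{item::opt_4}. Since $\Xk{k+1}$ minimizes $G(\cdot\mid\Xk{k})$ on $\MM$, its Riemannian gradient vanishes there, i.e. $\mathcal{P}_{\Xk{k+1}}\big(\Xk{k}\nM+(\Xk{k+1}-\Xk{k})\nGamma\big)=\0$, where $\mathcal{P}_X$ is the orthogonal projection onto $T_X\MM$ and $\nabla F(\Xk{k})=\Xk{k}\nM$. Writing $\nabla F(\Xk{k+1})=\Xk{k+1}\nM=\Xk{k}\nM+(\Xk{k+1}-\Xk{k})\nM$ and subtracting the optimality condition, linearity of $\mathcal{P}_{\Xk{k+1}}$ yields $\grad F(\Xk{k+1})=\mathcal{P}_{\Xk{k+1}}\big((\Xk{k+1}-\Xk{k})(\nM-\nGamma)\big)$; non-expansiveness of the projection then gives $\|\grad F(\Xk{k+1})\|\leq\|\nGamma-\nM\|_2\,\|\Xk{k+1}-\Xk{k}\|$, so \ref{item::opt_3} forces $\grad F(\Xk{k})\to\0$.

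Finally, \ref{item::opt_5} and \ref{item::opt_6} are immediate corollaries: item \ref{item::bound_1} of \cref{theorem::bound} gives $\nGamma-\nM=(\nOmega-\nM)+\alpha\I\succeq\alpha\I$, so $\alpha>0$ already implies $\nGamma\succ\nM$ and the claims follow from \ref{item::opt_3} and \ref{item::opt_4}. The one genuine wrinkle is $\gpmo$, whose translation block is recovered optimally for $F$ (via \cref{eq::tsolf}) rather than for $G$; here I would run the identical argument on the marginalized objective $f(R)\triangleq\min_tF(t,R)$ obtained by a Schur complement of the translation block of $\nM$, for which the $\mathrm{\Phi}$-update is exactly the corresponding generalized proximal step, and then transfer the conclusion back to $X$ using that the optimal $t$ keeps $\nabla_tF=\0$, so $\grad F(\Xk{k+1})=(\0,\,\grad f(\Rk{k+1}))$ and $\|\Xk{k+1}-\Xk{k}\|$ is controlled by $\|\Rk{k+1}-\Rk{k}\|$. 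I expect the main obstacle to be \ref{item::opt_4}: justifying rigorously that the exact rotation minimizer is a Riemannian critical point of $G(\cdot\mid\Xk{k})$ and that the projection onto $T_X\MM$ is non-expansive on the $SO(d)^n$ factor, rather than the energy bookkeeping in \ref{item::opt_1}--\ref{item::opt_3}.
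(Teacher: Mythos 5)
Your proposal is correct and follows essentially the same route as the paper's proof: the majorization--minimization descent for \ref{item::opt_1}--\ref{item::opt_2}, the sufficient-decrease estimate $F(\Xk{k})-F(\Xk{k+1})\geq\tfrac{\lambda}{2}\|\Xk{k+1}-\Xk{k}\|^2$ for \ref{item::opt_3}, comparison of $\grad F$ with the vanishing Riemannian gradient of the surrogate at its minimizer for \ref{item::opt_4}, the reduction of $\gpmo$ to the Schur-complemented objective $F'(R)$ with transfer back through \cref{eq::tsolf}, and the observation $\nGamma-\nM\succeq\alpha\I$ for \ref{item::opt_5}--\ref{item::opt_6}. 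The only cosmetic difference is in \ref{item::opt_4}, where you exploit exactness of the quadratic expansion to collapse the error into the single projected term $\mathcal{P}_{\Xk{k+1}}\big((\Xk{k+1}-\Xk{k})(\nM-\nGamma)\big)$, whereas the paper splits it into a Lipschitz-gradient term and a $\nGamma$ term bounded via compactness of $SO(d)^n$.
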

\begin{proof}
	See \cite[Appendix B.3]{fan2019gpm}.
\end{proof}
\vspace{-0.75em}

\section{Accelerated Generalized Proximal Methods for PGO }\label{section::acc}
\vspace{-0.25em}

\begin{algorithm}[tb]
	\caption{The $\nagmo$ Method}
	\label{algorithm::nag*}
	\begin{algorithmic}[1]
		\State\textbf{Input}: An initial iterate $X^{(0)}=\begin{bmatrix}
		t^{(0)} &  R^{(0)}
		\end{bmatrix}\in\R^{d\times n}\times SO(d)^n$ and $X^{(-1)}=\begin{bmatrix}
		t^{(-1)} &  R^{(-1)}
		\end{bmatrix}\in\R^{d\times n}\times SO(d)^n$ in which $t^{(0)}=-R^{(0)} \widetilde{T}^\transpose \mathrm{\Omega} A^\transpose (A\mathrm{\Omega}A^\transpose)^\dagger $ and $t^{(-1)}=-R^{(-1)} \widetilde{T}^\transpose \mathrm{\Omega} A^\transpose (A\mathrm{\Omega}A^\transpose)^\dagger $, $s^{(0)}\in[1,\,+\infty)$, and the maximum number of iterations $N$.
		\vspace{0.25em}
		\State\textbf{Output}: A sequence of iterates $\{X^{(k)}, \, s^{(k)}\}$.\vspace{0.2em} 
		\Function { $\nagmo$}{$X^{(0)},\,X^{(-1)},\,s^{(0)},\, N$}
		\vspace{0.2em}
		\For{$k=0\rightarrow N-1$}
		\vspace{0.3em}
		\State $s^{(k+1)}\leftarrow\dfrac{\sqrt{4s^{(k)}{}^2+1}+1}{2}$,\quad $Y^{(k)}\leftarrow X^{(k)}+\dfrac{s^{(k)}-1}{s^{(k+1)}}\left(X^{(k)}-X^{(k-1)}\right)$
		\vspace{0.3em}
		\State $\begin{bmatrix}
		\theta_1^{(k)} & \cdots &\theta_n^{(k)} 
		\end{bmatrix}\leftarrow Y^{(k)}\mathrm{\Phi}$
		\vspace{0.25em}
		\For{$i=1\rightarrow n$}
		\vspace{0.2em}
		\State $R_i^{(k+1)}\leftarrow\arg\max\limits_{R_i\in SO(d) }\trace(R_i^\transpose\theta_i^{(k)})$
		\EndFor
		\vspace{0.2em}
		\State $R^{(k+1)}\leftarrow\begin{bmatrix}
		R_1^{(k+1)} & \cdots & R_n^{(k+1)}
		\end{bmatrix}$
		\State $t^{(k+1)}\leftarrow-R^{(k+1)} \widetilde{T}^\transpose \mathrm{\Omega} A^\transpose (A\mathrm{\Omega}A^\transpose)^\dagger $
		\vspace{0.25em}
		\State $X^{(k+1)}\leftarrow\begin{bmatrix}
		& t^{(k+1)} & R^{(k+1)}
		\end{bmatrix}$
		\EndFor
		\State \textbf{return} $\{X^{(k)}, \, s^{(k)}\}$
		\EndFunction
	\end{algorithmic}
\end{algorithm}

$\gpm$ and $\gpmo$ generalize proximal methods that use Euclidean gradients to update pose estimates, which is different from existing first-order PGO methods \cite{olson2006fast,grisetti2009nonlinear,tron2009distributed} using Riemannian gradients, and thus, it is possible to accelerate $\gpm$ and $\gpmo$ using \cite{nesterov1983method,nesterov2013introductory,ghadimi2016accelerated,jin2018accelerated,li2015accelerated}. 

Following Nesterov's accelerated proximal method \cite{nesterov1983method,nesterov2013introductory}, we might  extend $\gpmo$  to $\nagmo$ (\cref{algorithm::nag*}). $\nagmo$ is almost the same as $\gpmo$ at the beginning when $s^{(k)}$ is small but then more governed by the momentum term $X^{(k)}-X^{(k-1)}$ as $k$ increases. If we relax the constraints of $R_i\in SO(d)$ to any closed convex sets and choose initial iterate $X^{(0)}=X^{(-1)}$ and $s^{(0)}=1$, $\nagmo$ would converge to the global optima within  $O(1/N^2)$ time, whereas theoretically $\gpmo$ can not have a rate of convergence better than $O(1/N)$  \cite{nesterov1983method,nesterov2013introductory}. Even though PGO is a non-convex optimization problem, $\nagmo$ is expected to inherit the characteristics of Nesterov's accelerated proximal method and outperform $\gpmo$, and empirically, $\nagmo$ is indeed much faster than $\gpmo$. 

\begin{algorithm}[t]
	\caption{The $\agpmo$ Method}
	\label{algorithm::apm*}
	\begin{algorithmic}[1]
		\State\textbf{Input}: An initial iterate $X^{(0)}=\begin{bmatrix}
		t^{(0)} &  R^{(0)}
		\end{bmatrix}\in\R^{d\times n}\times SO(d)^n$ in which $t^{(0)}=-R^{(0)} \widetilde{T}^\transpose \mathrm{\Omega} A^\transpose (A\mathrm{\Omega}A^\transpose)^\dagger $, the maximum number of outer iterations $N$, the maximum number of inner iterations $N_0$, $\eta\in(0,\,1]\,$, and $\delta \in [0,\,\infty)$.
		\State\textbf{Output}: A sequence of iterates $\{X^{(k)}\}$.\vspace{0.2em} 
		\Function { $\agpmo$}{$X^{(0)},\, N,\, N_0,\, \delta$}
		\vspace{0.2em}
		\State $a^{(0)}\leftarrow1$,\quad$T^{(0)}\leftarrow X^{(0)}$,\quad $f^{(0)}\leftarrow F(X^{(0)})$\label{line::agpm_ini}
		\vspace{0.25em}
		\For{$k=0\rightarrow N-1$}
		\vspace{0.25em}
		\State $\{V^{(i)},\, s^{(i)}\}\leftarrow \nagmo(X^{(k)},\, T^{(k)},\, a^{(k)},\, N_0)$
		\vspace{0.35em}
		\If{$F(V^{(N_0)})\leq f^{(k)}- \delta\cdot \|V^{(N_0)}-X^{(k)}\|^2$}\label{line::agpm_delta}
		\vspace{0.35em}
		\State $a^{(k+1)}\leftarrow s^{(N_0)}$,\quad $X^{(k+1)}\leftarrow V^{(N_0)}$,\quad $T^{(k+1)}\leftarrow V^{(N_0-1)}$\label{line::algorithm4_nesterov}
		\vspace{0.1em}
		\Else
		\State $\{Z^{(i)}\}\leftarrow \gpmo(X^{(k)},\, N_0)$
		\vspace{0.25em}
		\State $a^{(k+1)}\leftarrow 1$,\quad $X^{(k+1)}\leftarrow Z^{(N_0)}$,\quad $T^{(k+1)}\leftarrow Z^{(N_0)}$\label{line::algorithm4_grad}
		\EndIf
		\vspace{0.25em}
		\State $f^{(k+1)}\leftarrow (1-\eta)\cdot f^{(k)} + \eta\cdot F(X^{(k+1)}) $
		\EndFor
		\vspace{0.2em}
		\State \textbf{return} $\{X^{(k)}\}$
		\EndFunction
	\end{algorithmic}
\end{algorithm}

Different from $\gpmo$, $\nagmo$ is not a descent algorithm, and might have ``Nesterov ripples'' due to high momentum term as $k$ increases \cite{o2015adaptive}. Moreover, even though $\nagmo$ is empirically much faster than $\gpmo$ to converge to first-order critical points, it seems difficult to have any theoretical guarantees of convergence for $\nagmo$. In order to address these theoretical and practical drawbacks of $\nagmo$, we propose $\agpmo$ (\cref{algorithm::apm*}) that is an extension of $\nagmo$ with adaptive restart -- a restart scheme is commonly used to improve the convergence of accelerated proximal methods  in convex optimization \cite{o2015adaptive}. In $\agpmo$, we implement $\gpmo$ for several iterations and then restart $\nagmo$ whenever the momentum term seems to take us in a bad direction, and as is shown later, though not a descent algorithm, $\agpmo$ is guaranteed to converge to first-order critical points under mild conditions. Since $\nagmo$ is usually preferred than $\gpmo$, it is recommended to choose a small $\delta$ in line~\ref{line::agpm_delta} of \cref{algorithm::apm*}. Besides acceleration, $\nagmo$  and $\agpmo$ are expected to escape saddle points faster than $\gpmo$ with some additional simple strategies adopted, for which interested readers can refer to \cite{jin2018accelerated} for more details. Similar to $\nagmo$ and $\agpmo$, we might also extend $\gpm$ to obtain $\mathsf{NAG-PGO}$ and $\agpm$, which are shown in \cite[Appendix A]{fan2019gpm}. 

In the experiments, we observe that $\agpm$ and $\agpmo$ converge to the global optima as long as $\gpm$ and $\gpmo$ converge to the global optima, however, $\agpm$ and $\agpmo$ are a lot faster than  $\gpm$ and $\gpmo$. Even though $\agpm$ and $\agpmo$ are not descent algorithms if $\eta < 1$, we still prove that $\agpm$ and $\agpmo$ converge to first-order critical points under mild conditions.
\begin{theorem}\label{theorem::nopt}
	\sloppy	Let $\{X^{(k)}\}$ be a sequence of iterates that is generated by either $\agpm$ or $\agpmo$. Then, 
	\begin{enumerate}[label={(\alph*)}]
		\item\label{item::agpm_1} $F(X^{(k)})\rightarrow F^\infty$ as $k\rightarrow \infty$;
		\vspace{0.1em}
		\item $\|X^{(k+1)}-X^{(k)}\|\rightarrow 0$ as $k\rightarrow \infty$ if $\nGamma\succ \nM$ and $\delta > 0$;\label{item::agpm_2}
		\vspace{0.1em}
		\item\label{item::agpm_3}  $\mathrm{grad}\; F(X^{(k)})\rightarrow \0$ as $k\rightarrow\infty$ if $\nGamma\succ \nM$, $\delta > 0$ and $N_0=1$;
		\vspace{0.2em}
		\item\label{item::agpm_4} $\|X^{(k+1)}-X^{(k)}\|\rightarrow 0$ as $k\rightarrow \infty$ if $\alpha > 0$ and $\delta > 0$;
		\vspace{0.1em}
		\item\label{item::agpm_5} $\mathrm{grad}\; F(X^{(k)})\rightarrow \0$ as $k\rightarrow\infty$ if $\alpha > 0$, $\delta > 0$ and $N_0=1$.
	\end{enumerate}
\end{theorem}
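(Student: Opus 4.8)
The plan is to use the monitored value $f^{(k)}$ carried by \cref{algorithm::apm*} as a Lyapunov function: I first show it is monotone and convergent, and then read off the statements about $F$, the step lengths, and the Riemannian gradient. I will repeatedly use that $F(X)=\tfrac12\trace(X\nM X^\transpose)$ is a nonnegative quadratic (hence bounded below), that $\gpmo$ is a descent method with the quantified decrease of \cref{theorem::opt}, and that every accepted $\nagmo$ step passes the sufficient--decrease test on line~\ref{line::agpm_delta}. Two remarks reduce the workload. First, since $\nGamma=\nH+\alpha\I$ and $\nH\succeq\nM$ by \cref{theorem::bound}\,\ref{item::bound_1}, the hypothesis $\alpha>0$ already gives $\nGamma\succeq\nM+\alpha\I\succ\nM$, so \ref{item::agpm_4} and \ref{item::agpm_5} follow from \ref{item::agpm_2} and \ref{item::agpm_3}; it therefore suffices to prove \ref{item::agpm_1}--\ref{item::agpm_3}. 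Second, the argument for $\agpm$ is identical after replacing $\gpmo,\nagmo$ by $\gpm,\nagm$ and the optimal translation recovery \cref{eq::tsolf} by its generalized--proximal analogue, so I phrase everything for $\agpmo$.

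For \ref{item::agpm_1} I would first prove by induction that $F(X^{(k)})\le f^{(k)}$ for all $k$. In the If--branch line~\ref{line::agpm_delta} gives $F(X^{(k+1)})\le f^{(k)}$ outright; in the Else--branch $X^{(k+1)}=Z^{(N_0)}$ is produced by $\gpmo$ from $Z^{(0)}=X^{(k)}$, so $F(X^{(k+1)})\le F(X^{(k)})$ by \cref{theorem::opt}\,\ref{item::opt_1}, and the inductive bound $F(X^{(k)})\le f^{(k)}$ finishes the step; because $f^{(k)}$ is a convex combination of $f^{(k-1)}\ge F(X^{(k)})$ and $F(X^{(k)})$, the relation $F(X^{(k)})\le f^{(k)}$ propagates. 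Substituting $F(X^{(k+1)})\le f^{(k)}$ into $f^{(k+1)}=(1-\eta)f^{(k)}+\eta F(X^{(k+1)})$ shows $f^{(k+1)}\le f^{(k)}$, so $f^{(k)}$ is nonincreasing and bounded below and converges to some $F^\infty$. Solving the recursion for $F(X^{(k+1)})=\tfrac1\eta\big(f^{(k+1)}-(1-\eta)f^{(k)}\big)$ and letting $k\to\infty$ (where $\eta>0$ is essential) gives $F(X^{(k)})\to F^\infty$.

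For \ref{item::agpm_2} I bound the step branch by branch, using that by \ref{item::agpm_1} both $f^{(k)}-F(X^{(k+1)})$ and $F(X^{(k)})-F(X^{(k+1)})$ tend to $0$. In the If--branch line~\ref{line::agpm_delta} reads $\delta\,\|X^{(k+1)}-X^{(k)}\|^2\le f^{(k)}-F(X^{(k+1)})$, so $\delta>0$ forces $\|X^{(k+1)}-X^{(k)}\|\to0$. In the Else--branch $\nGamma\succ\nM$ gives $\lambda\triangleq\lambda_{\min}(\nGamma-\nM)>0$, each inner $\gpmo$ step obeys $F(Z^{(i)})-F(Z^{(i+1)})\ge\tfrac{\lambda}{2}\|Z^{(i+1)}-Z^{(i)}\|^2$, and summing over $i$ together with $\big\|\sum_{i}(Z^{(i+1)}-Z^{(i)})\big\|^2\le N_0\sum_i\|Z^{(i+1)}-Z^{(i)}\|^2$ yields $F(X^{(k)})-F(X^{(k+1)})\ge\tfrac{\lambda}{2N_0}\|X^{(k+1)}-X^{(k)}\|^2$. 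Both right--hand sides vanish, and since the branch chosen at each $k$ is immaterial to these estimates, $\|X^{(k+1)}-X^{(k)}\|\to0$.

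The delicate part is \ref{item::agpm_3}, and this is where $N_0=1$ is indispensable. The proof of \cref{theorem::opt}\,\ref{item::opt_4} provides a single--step estimate: a generalized--proximal update (a $\gpmo$ step, or an accepted $\nagmo$ step) from a base point $Y$ to $W$ satisfies $\|\mathrm{grad}\,F(W)\|\le C\,\|W-Y\|$ for a fixed constant $C$, because $\nabla F(X)=X\nM$ is affine, the tangent--projected gradient of the upper bound \cref{eq::obj_first2} centred at $Y$ vanishes at $W$, and tangent projection is nonexpansive. In the Else--branch $Y=X^{(k)}$, so the bound is $C\,\|X^{(k+1)}-X^{(k)}\|\to0$ by \ref{item::agpm_2}. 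In the If--branch $Y=X^{(k)}+\beta_k\big(X^{(k)}-T^{(k)}\big)$ with the Nesterov coefficient $\beta_k\in[0,1)$ (recall $a^{(k)}\ge1$), so I must control $\|X^{(k)}-T^{(k)}\|$; here $N_0=1$ enters through lines~\ref{line::algorithm4_nesterov} and \ref{line::algorithm4_grad}, which set $T^{(k)}$ to the initial iterate $X^{(k-1)}$ of the previous accepted $\nagmo$ call, or to $X^{(k)}$ after a $\gpmo$ fallback. Thus $\|X^{(k)}-T^{(k)}\|$ equals $\|X^{(k)}-X^{(k-1)}\|$ or $0$, hence $\to0$ by \ref{item::agpm_2}, and $\|W-Y\|\le\|X^{(k+1)}-X^{(k)}\|+\|X^{(k)}-T^{(k)}\|\to0$. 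In both branches $\mathrm{grad}\,F(X^{(k+1)})\to0$, that is, $\mathrm{grad}\,F(X^{(k)})\to0$. The main obstacle throughout is exactly this control of the extrapolation: for $N_0>1$ the inner $\nagmo$ iterates carry momentum invisible to the outer step bound \ref{item::agpm_2}, which is why \ref{item::agpm_3} and \ref{item::agpm_5} are claimed only for $N_0=1$.
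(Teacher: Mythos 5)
Your proposal is correct and follows essentially the same route as the paper's proof in \cite[Appendix B.4]{fan2019gpm}: induction on the monitor $f^{(k)}$ for \ref{item::agpm_1}, the branch-wise sufficient-decrease bounds for \ref{item::agpm_2}, the vanishing surrogate gradient at the extrapolated base point (with $\|X^{(k)}-Y^{(k)}\|\le\|X^{(k)}-T^{(k)}\|\to 0$) for \ref{item::agpm_3}, and the reduction of \ref{item::agpm_4}--\ref{item::agpm_5} via $\nGamma=\alpha\cdot\I+\nOmega\succ\nM$. Your Cauchy--Schwarz aggregation of the $N_0$ inner $\gpmo$ steps, yielding the constant $\lambda/(2N_0)$ in the Else-branch of \ref{item::agpm_2}, is in fact slightly more careful than the paper, which states the per-outer-iteration decrease without that factor.
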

\begin{proof}
	See \cite[Appendix B.4]{fan2019gpm}.
\end{proof}

Note that even though Theorems \ref{theorem::nopt}\ref{item::agpm_3} and \ref{theorem::nopt}\ref{item::agpm_5} of  require the maximum number of inner iterations $N_0=1$ to guarantee $\grad F(\Xk{k})\rightarrow 0$, we still observe in the experiments that $\agpm$ and $\agpmo$ always converge to first-order critical points for any $N_0 >1$. As a result, it should be empirically all right to specify $N_0 >1$ so that the number of objective function evaluation is reduced and the overall efficiency is improved.

\vspace{-1em}

\section{Experiments}\label{section::experiments}
In this section, we evaluate the performance of generalized proximal methods for PGO that are proposed in \cref{section::fast,section::acc} on SLAM and distributed 3D sensor network localization, and make comparisons with existing techniques. All the tests have been performed on a Thinkpad P51 laptop with a 3.1GHz Intel Core Xeon that runs Ubuntu 18.04 and uses g++ 7.4 as C++ compiler.

\subsection{SLAM Benchmark Datasets}

In the first set of experiments, we implement $\agpmo$ (\cref{algorithm::apm*}) on a variety of popular 2D and 3D SLAM datasets and compare the results with $\sesync$ \cite{rosen2016se}, which is one of the fastest PGO methods.

For each of the dataset, we choose $\alpha = 0$, $N_0=10$, $\eta =1$ and $\delta = 1\times 10^{-5}$ for $\agpmo$, and $\agpmo$ terminates once the relative improvement of the objective function is less than $\epsilon=0.002$, i.e., $F(X^{(k)})\leq (1+\epsilon)F(X^{(k+1)})$. For $\sesync$, we use the default settings except the stopping criteria. In default, $\sesync$ does not stop until attaining a local optimum, whereas in our experiments, for a fair comparison, we terminate $\sesync$ once it achieves an equivalent accuracy as $\agpmo$, which takes less time than the default settings. For all the datasets, we use the chordal initialization \cite{carlone2015initialization} for both $\agpmo$ and $\sesync$. 

The results for these experiments are shown in \cref{table::2Dcomparison,table::3Dcomparison} and \cref{fig::slam2d,fig::slam3d,fig::slam_results}. In \cref{table::2Dcomparison,table::3Dcomparison}, $n$ is the number of unknown poses, $m$ is the number of edges, $f^*$ is the globally optimal objective value that can be obtained using $\sesync$, and $f$ is the objective value attained by $\agpmo$ and $\sesync$. In \cref{fig::slam2d,fig::slam3d}, we present the speed-up v.s. $\sesync$ and the relative objective error $(f-f^*)/f^*$ of $\agpmo$. In all the experiments, $\agpmo$ is several times faster than $\sesync$ to achieve modest accurate solutions with an average speed-up of  $5.14\mathrm{x}$ for 2D SLAM datasets and $9.14\mathrm{x}$ for 3D SLAM datasets. In addition, the average relative objective errors of $\agpmo$ for 2D and 3D SLAM datasets are $0.25\%$ and $0.075\%$, respectively, and such an accuracy is generally sufficient for practical use in SLAM. Furthermore, though not presented in this paper, $\agpmo$ converge to the global optima in all the experiments if enough computational time is given. 

We also compare the convergence of $\gpm$, $\gpmo$, $\agpm$ and $\agpmo$ with $\sesync$ on 2D and 3D SLAM datasets, whose results are shown in \cite[Appendix C]{fan2019gpm}.

\begin{table}[t]
	\renewcommand{\arraystretch}{1.3}
	\begin{tabular}{|c||c|c|c||c|c|c|c|c|}
		\hline			
		\multirow{2}{*}{Dataset}&\multirow{2}{*}{$n$}&\multirow{2}{*}{$m$}&\multirow{2}{*}{$f^*$} & \multicolumn{2}{c|}{$\sesync$\cite{rosen2016se}} & \multicolumn{2}{c|}{$\agpmo$ [ours]}\\
		\cline{5-8}
		&& && $f$  & Time (s) & $f$ & Time (s) \\
		\hline\hline
		\tt ais2klin&$15115$&$16727$&$1.885\times 10^2$&$1.885\times 10^2$&$4.83\times 10^{-1}$&$1.901\times 10^2$&$6.19\times10^{-2}$\\
		\hline
		\tt city &$10000$&$20687$&$6.386\times10^2$&$6.387\times 10^2$&$3.19\times 10^{-1}$&$6.388\times 10^2$&$4.02\times 10^{-2}$\\
		\hline
		\tt CSAIL     &$1045$&$1172$&$3.170\times 10^1$&$\ten{3.170}{1}$&$4.02\times10^{-3}$&$3.171\times 10^1$&$7.81\times10^{-4}$\\
		\hline
		\tt manhatta   &$3500$&$5453$&$6.432\times 10^3$&$\ten{6.434}{3}$&$\ten{9.02}{-3}$&$\ten{6.435}{3}$&$\ten{3.26}{-3}$\\
		\hline
		\tt intel     &$1728$&$2512$&$5.235\times 10^1$&$\ten{5.235}{1}$&$\ten{1.05}{-2}$&$\ten{5.248}{1}$&$\ten{5.10}{-3}$\\
		\hline
	\end{tabular}
	\vspace{0.75em}
	\caption{Results of the 2D SLAM datasets}	\label{table::2Dcomparison}
	
	\vspace{.5em}	
	\begin{tabular}{|c||c|c|c||c|c|c|c|c|}
		\hline			
		\multirow{2}{*}{Dataset}&\multirow{2}{*}{$n$}&\multirow{2}{*}{$m$}&\multirow{2}{*}{$f^*$} & \multicolumn{2}{c|}{$\sesync$ \cite{rosen2016se}} & \multicolumn{2}{c|}{$\agpmo$ [ours]}\\
		\cline{5-8}
		&& && $f$  & Time (s) & $f$ & Time (s) \\
		\hline\hline
		\tt cubicle  &$5750$&$16869$&$\ten{7.171}{2}$&$\ten{7.174}{2}$&$\ten{1.99}{-1}$&$\ten{7.178}{2}$&$\ten{6.64}{-2}$\\
		\hline
		\tt garage    &$1661$&$6275$&$\ten{1.263}{0}$&$\ten{1.263}{0}$&$\ten{4.38}{-1}$&$\ten{1.264}{0}$&$\ten{1.21}{-2}$\\
		\hline
		\tt grid     &$8000$&$22236$&$\ten{8.432}{4}$&$\ten{8.433}{4}$&$\ten{1.10}{0\hphantom{-}}$&$\ten{8.433}{4}$&$\ten{1.54}{-1}$\\
		\hline
		\tt rim     &$10195$&$29743$&$\ten{5.461}{3}$&$\ten{5.463}{3}$&$\ten{1.15}{0\hphantom{-}}$&$\ten{5.465}{3}$&$\ten{3.03}{-1}$\\
		\hline
		\tt sphere    &$2500$&$4949$&$\ten{1.687}{3}$&$\ten{1.687}{3}$&$\ten{1.74}{-1}$&$\ten{1.687}{3}$&$\ten{2.66}{-2}$\\
		\hline
		\tt torus     &$5000$&$9048$&$\ten{2.423}{4}$&$\ten{2.423}{4}$&$\ten{1.89}{-1}$&$\ten{2.425}{4}$&$\ten{5.50}{-2}$\\
		\hline
		\tt sphere-a    &$2200$&$8827$&$\ten{2.962}{6}$&$\ten{2.963}{6}$&$\ten{1.56}{-1}$&$\ten{2.963}{6}$&$\ten{4.06}{-2}$\\
		\hline
	\end{tabular}
	\vspace{0.75em}
	\caption{Results of the 3D SLAM datasets}	\label{table::3Dcomparison}
	\vspace{-1.5em}
\end{table}

\subsection{Distributed 3D Sensor Network Localization}
\vspace{-0.25em}
In the experiments of distributed 3D sensor network localization, it is assumed that the 3D sensor network is static and connected, and each node in the network can only communicate with its neighbours and can only measure the relative pose w.r.t. its neighbours, and as a result, we need to solve it distributedly to estimate poses of each node. As mentioned before, $\gpmo$ and $\agpmo$ always optimally recover the translation $t$, and thus, expect a faster convergence than $\gpm$ and $\agpm$. However, in distributed PGO, similar to second-order PGO methods \cite{kaess2012isam2,rosen2014rise,kuemmerle11icra,rosen2016se} solving linear systems to evaluate the descent direction, $\gpmo$ and $\agpmo$ have to use iterative solvers to solve \cref{eq::tsolf} to recover the translation $t$, which usually reduces the efficiency of optimization. In contrast, $\gpm$ and $\agpm$ only need the estimated poses of itself and its neighbours in optimization and do not have to solve linear systems as $\gpmo$ and $\agpmo$ and second-order PGO methods \cite{kaess2012isam2,rosen2014rise,kuemmerle11icra,rosen2016se}. Therefore, $\gpm$ and $\agpm$  are well suited for the distributed PGO without inducing extra efforts. Furthermore, there is no loss of theoretical guarantees for $\gpm$ and $\agpm$ in distributed PGO. 
\begin{figure}[t]
	\vspace{-2em}
	\centering
	\begin{tabular}{cc}
		\hspace{4.5mm}\subfloat[][]{\includegraphics[trim =20mm 6mm 1mm 0mm,width=0.45\textwidth]{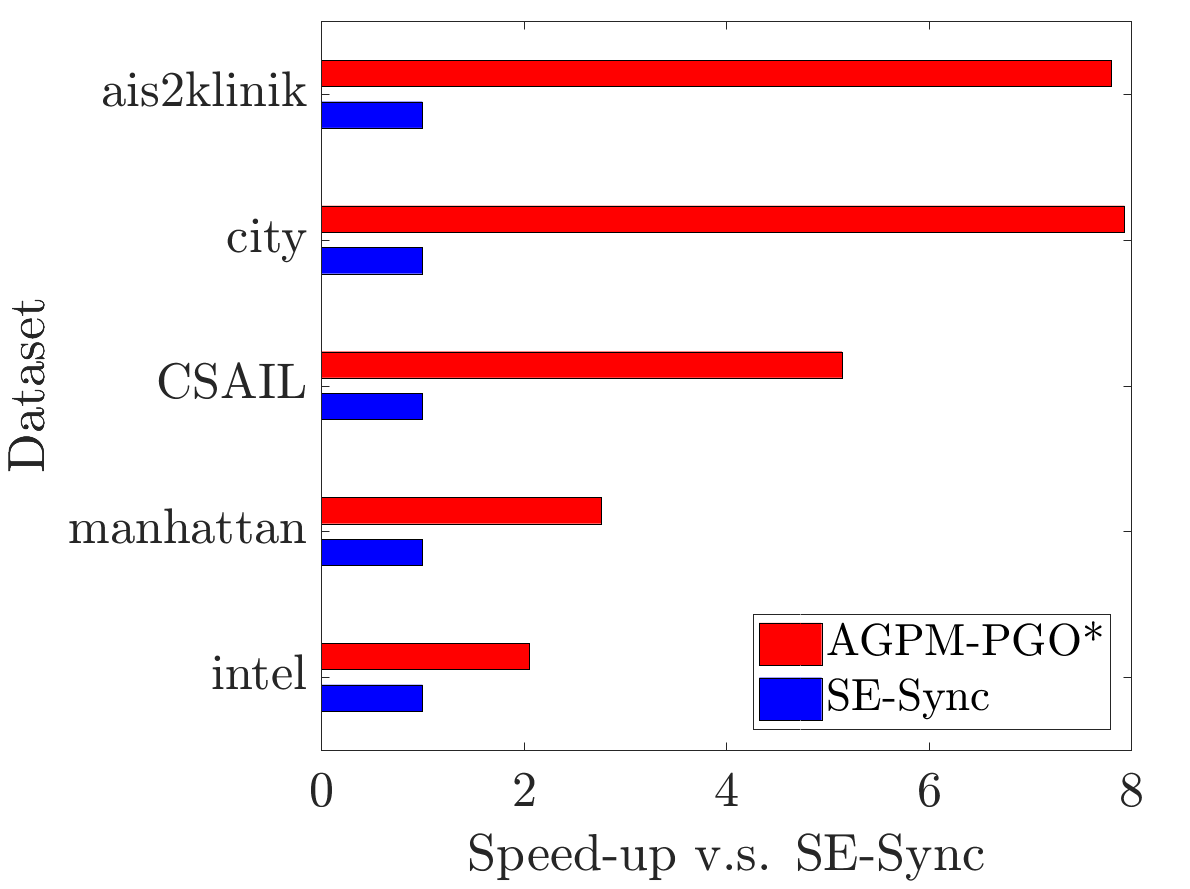}}\hspace{2em} &
		\subfloat[][]{\includegraphics[trim =20mm 6mm 1mm 0mm,width=0.45\textwidth]{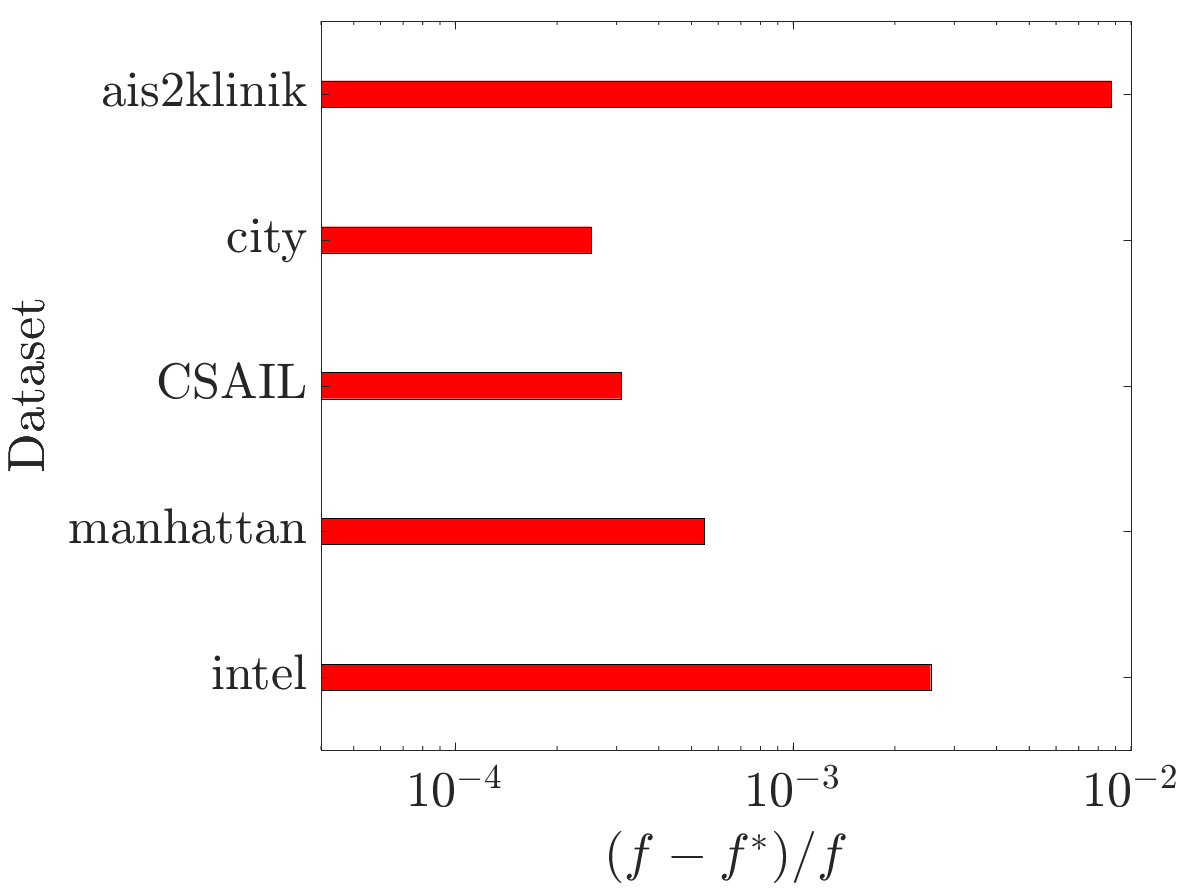}}
	\end{tabular}
	\vspace{-0.6em}
	\caption{The results of $\agpmo$ for 2D SLAM datasets. The results are (a) speed-up v.s. $\sesync$ \cite{rosen2016se} and (b) the relative objective error  $(f-f^*)/f^*$ of $\agpmo$. For 2D SLAM datasets, the average speed up is $5.14\mathrm{x}$  and the average relative objective error  is $0.25\%$.}
	\label{fig::slam2d} 
	\vspace{-0.35em}
	\begin{tabular}{cc}
		\hspace{4.5mm}\subfloat[][]{\includegraphics[trim =20mm 6mm 1mm 0mm,width=0.45\textwidth]{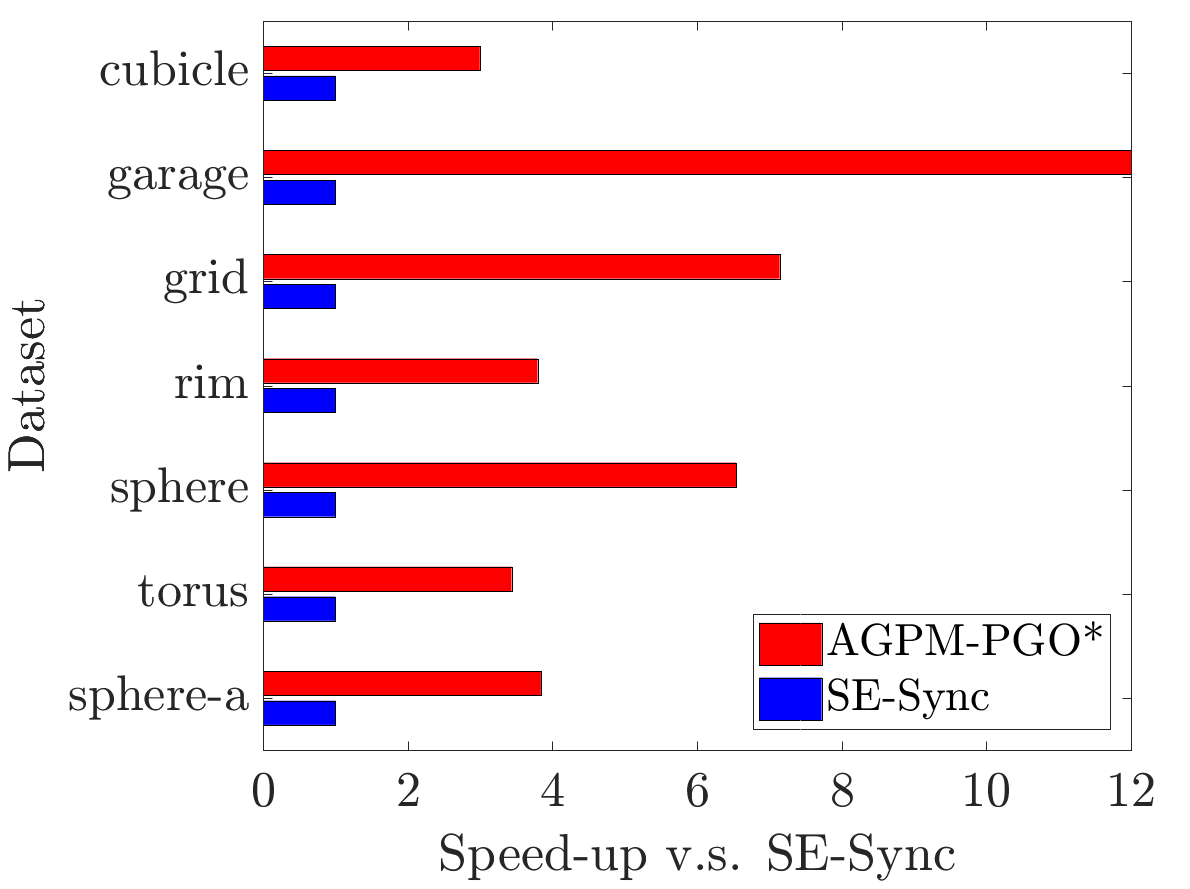}}\hspace{2em} &
		\subfloat[][]{\includegraphics[trim =20mm 6mm 1mm 0mm,width=0.45\textwidth]{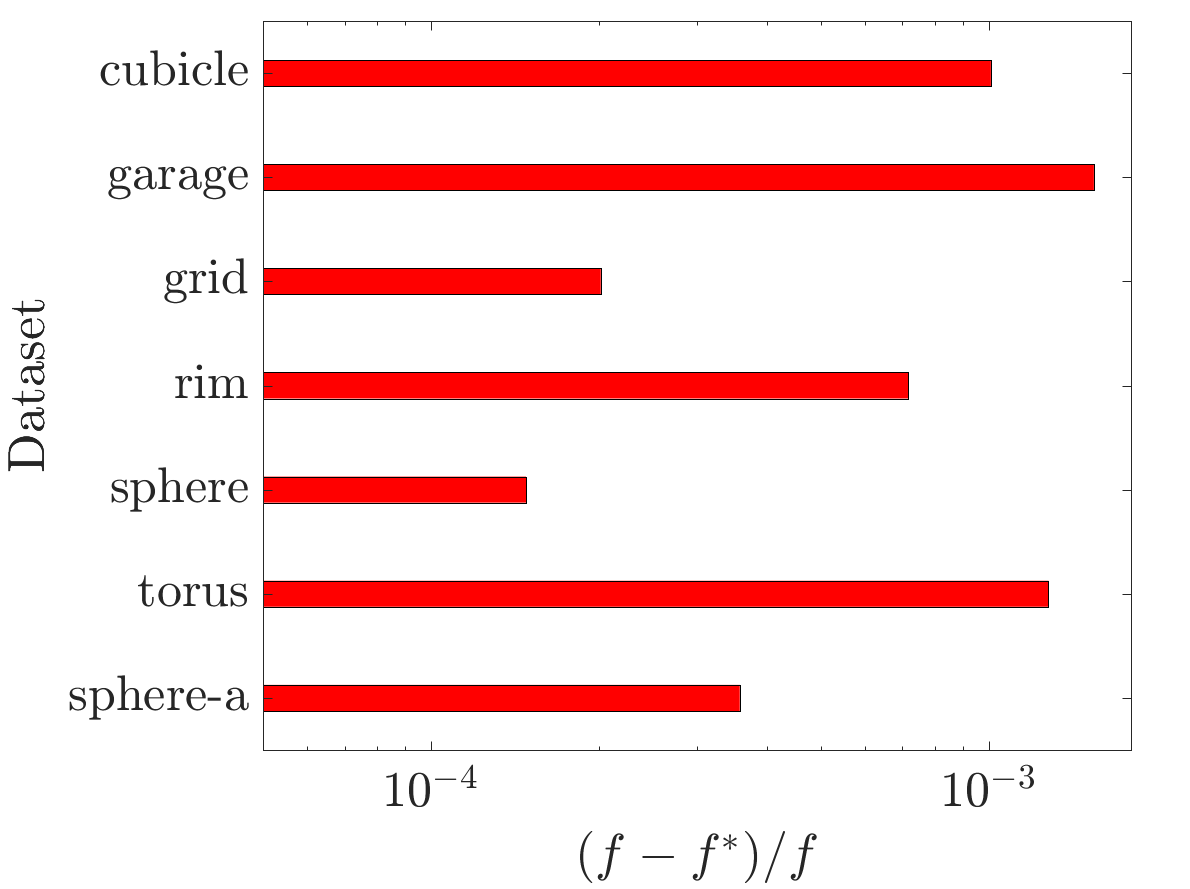}}
	\end{tabular}
	\vspace{-0.6em}
	\caption{The results of $\agpmo$ for 3D SLAM datasets. The results are (a) speed-up v.s. $\sesync$ \cite{rosen2016se} and (b) the relative objective error  $(f-f^*)/f^*$ of $\agpmo$. For 3D SLAM datasets, the average speed up is $9.14\mathrm{x}$  and the average relative objective error  is $0.075\%$.}
	\label{fig::slam3d}
	\vspace{-1em} 
\end{figure}

In the experiments, we simulate a distributed 3D sensor network on an ellipsoid with $n=200$ vertices (nodes) and $m=600$ edges. The noisy measurements $\tilde{g}_{ij}=(\nR_{ij},\,\nt_{ij})\in SE(3)$ are generated according to the model $\nR_{ij}=\underline{R}{}_{ij}\exp(\xi_{ij}^R)$ and $\nt_{ij}=\underline{t}{}_{ij}+\xi_{ij}^t$, in which $\xi_{ij}^R\sim \mathcal{N}(0,\, \sigma_R\cdot \I)$ with $\delta_R = 0.05\,\mathrm{rad}$ with $\xi_{ij}^t\sim \mathcal{N}(0,\, \sigma_t\cdot \I)$ and $\delta_t = 0.05\,\mathrm{m}$. We compute the statistics over $30$ runs and use the chordal initialization\footnote{The chordal initialization can be distributedly solved by relaxing PGO as convex quadratic programming.} for all the runs. The results are shown in \cref{fig::sensor_networks}. In all the 30 runs, $\agpm$ converge to the global optima with an average rotation error of $0.0253$ rad and an average relative translation error of $1.60\%$.

\section{Conclusions}\label{section::conclusion}
\vspace{-0.5em}
In this paper, we have proposed generalized proximal methods for PGO and proved that our proposed methods converge to first-order critical points. In addition, we have accelerated the rates of convergence without loss of any theoretical guarantees. Our proposed methods can be distributed and parallelized with minimal efforts and with no compromise of efficiency. In the experiments, our proposed methods are much faster than existing techniques to converge to modest accuracy that is sufficient for practical use. Though not presented in this paper, our proposed methods can also be extended for incremental smoothing \cite{fan2019incremental}.

\begin{figure}[!htpb]
	\vspace{-2em}
	\centering
	\begin{tabular}{ccc}
		\subfloat[][\tt city10000]{\includegraphics[trim =0mm 0mm 0mm 0mm,width=0.23\textwidth]{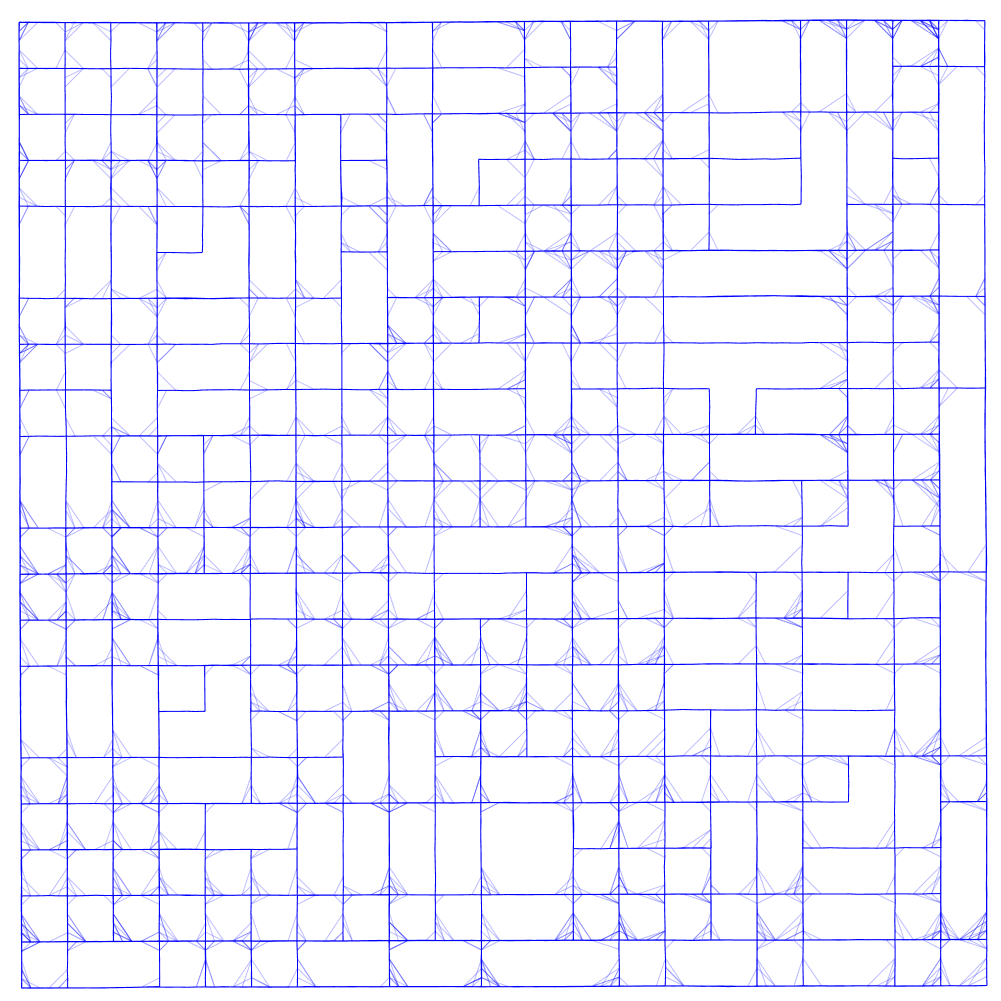}}\hspace{1em} &
		\subfloat[][\tt intel]{\includegraphics[trim =0mm 0mm 0mm 0mm,width=0.25\textwidth]{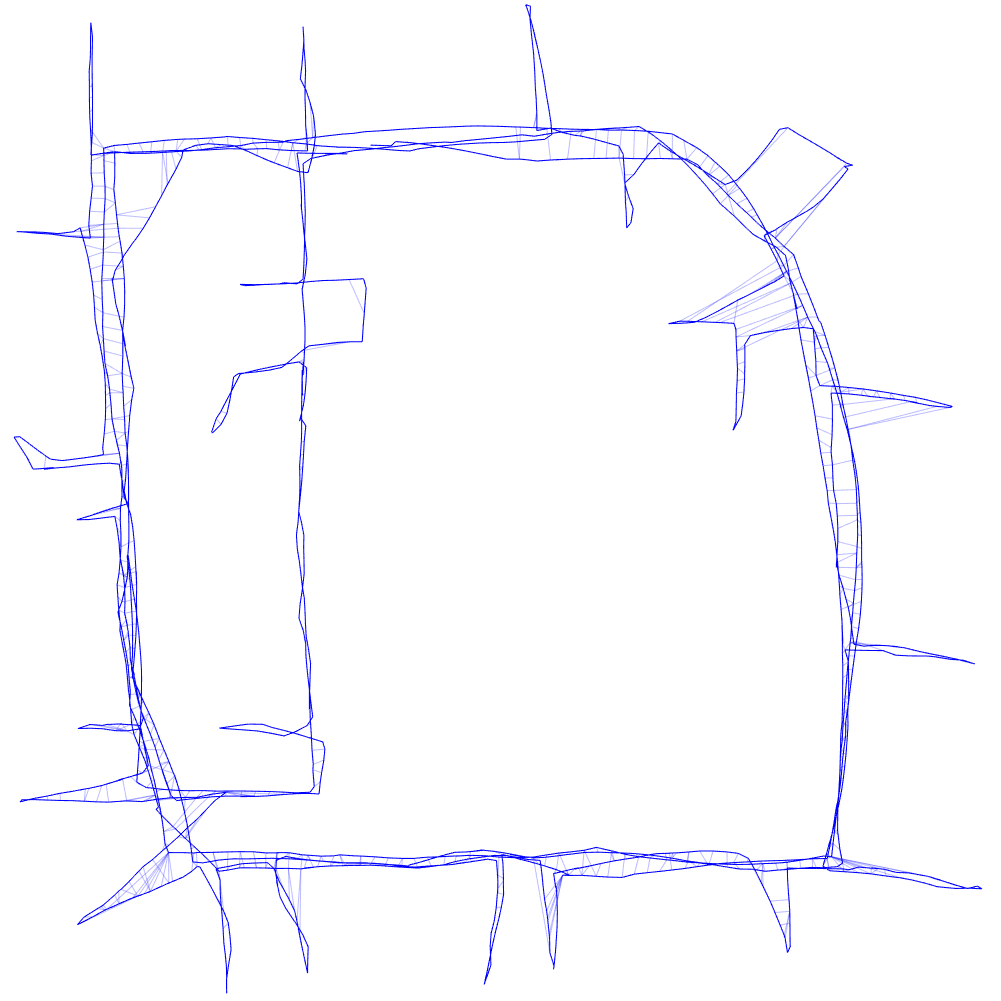}} &
		\subfloat[][\tt ais2klinik]{\includegraphics[trim =0mm 0mm 0mm 0mm,width=0.25\textwidth]{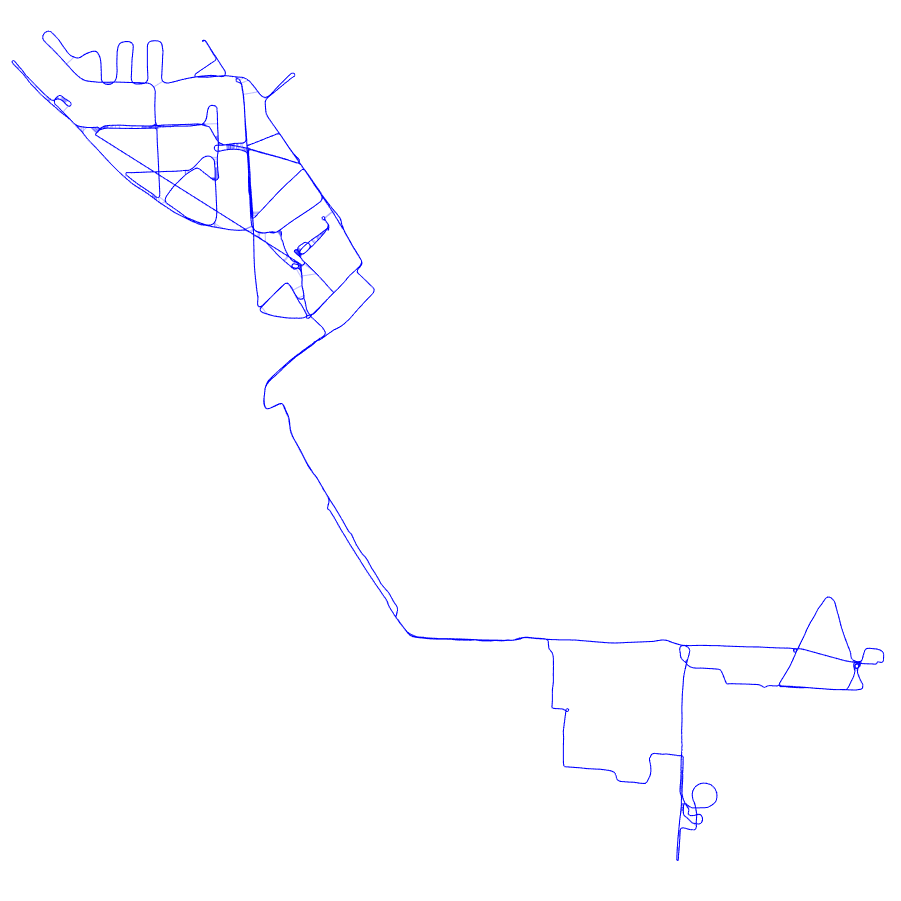}} \vspace{-1.em}\\
		\subfloat[][\tt garage]{\includegraphics[trim =0mm 0mm 0mm 0mm,width=0.28\textwidth]{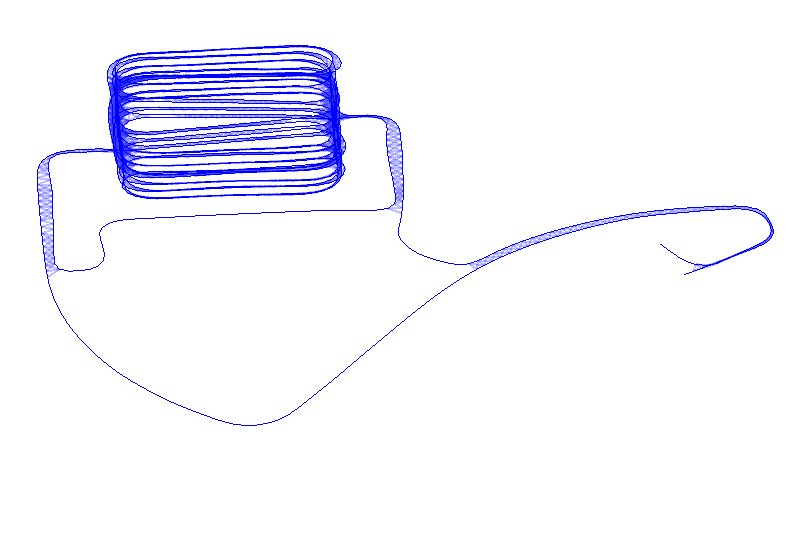}} &
		\subfloat[][\tt rim]{\includegraphics[trim =0mm 0mm 0mm 0mm,width=0.28\textwidth]{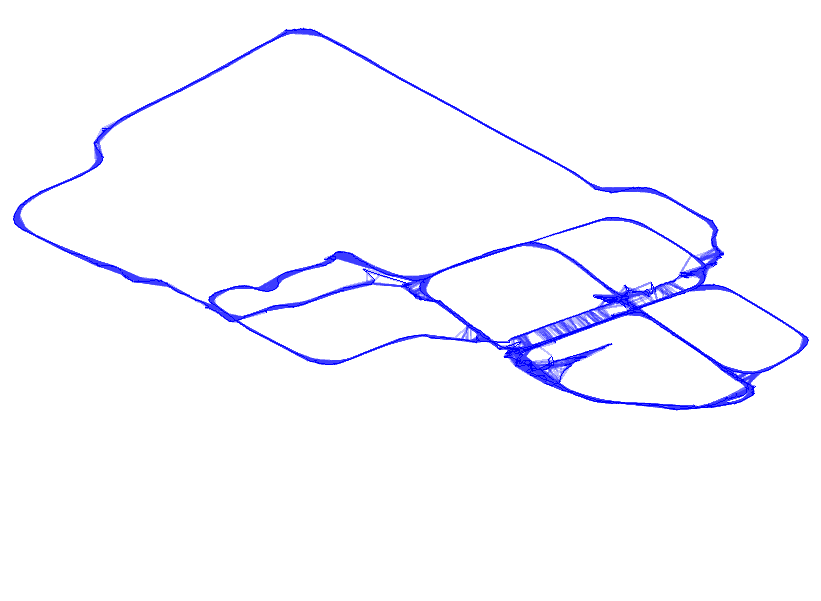}} &
		\subfloat[][\tt sphere]{\includegraphics[trim =0mm 0mm 0mm 0mm,width=0.23\textwidth]{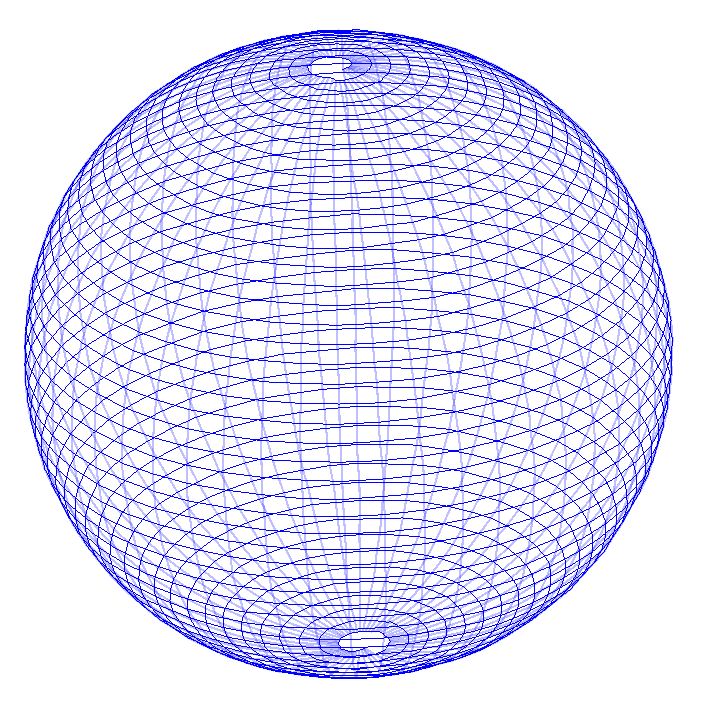}}
	\end{tabular}
	\caption{The results of $\agpmo$ on some 2D and 3D SLAM benchmark datasets. }\label{fig::slam_results} 
	\vspace{-0.5em}
	\begin{tabular}{cc}
		\hspace{5mm}\subfloat[][]{\includegraphics[trim =20mm 2mm 1mm 0mm,width=0.45\textwidth]{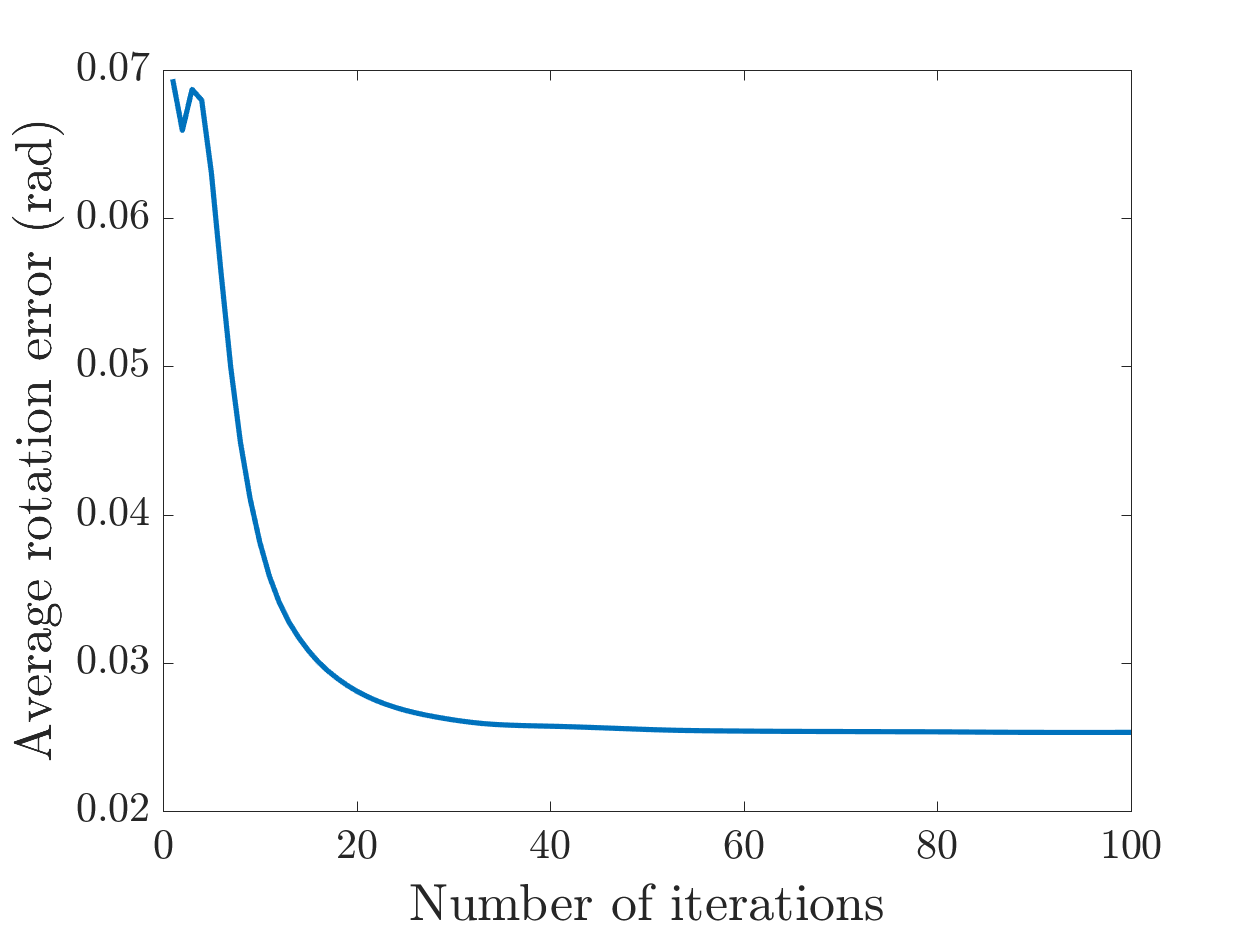}}\hspace{2em} &
		\subfloat[][]{\includegraphics[trim =20mm 2mm 1mm 0mm,width=0.45\textwidth]{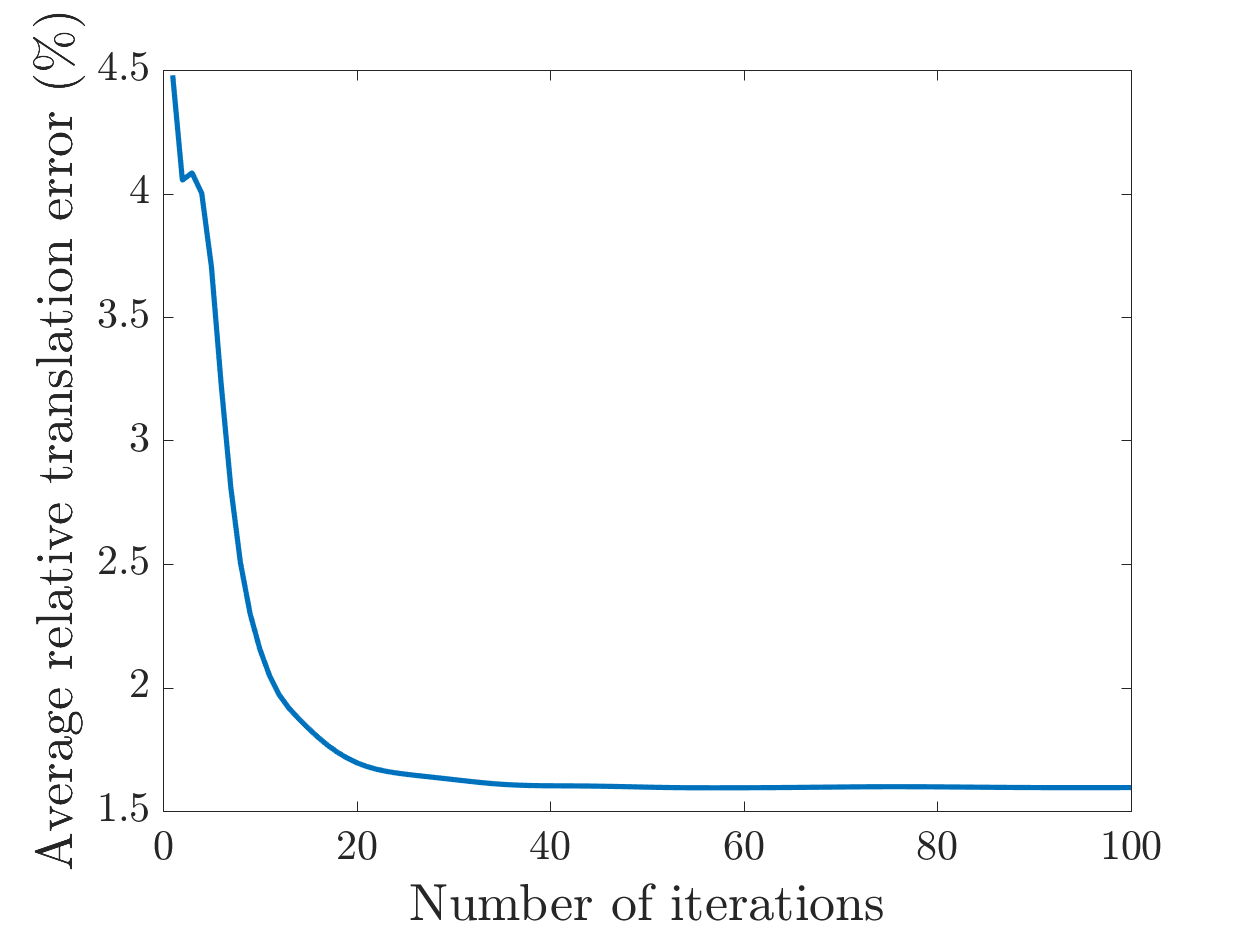}}
	\end{tabular}
	\vspace{-0.5em}
	\caption{The results of distributed sensor network localization using $\agpm$ over 30 runs. The results are (a) average rotation error and (b) average relative translation error. In all the 30 runs, $\agpm$ converge to global optima.}
	\label{fig::sensor_networks} 
	\vspace{-1.5em}
\end{figure}

\vspace{-1em}
\bibliographystyle{unsrt}
\bibliography{mybib}

\newpage
\setcounter{section}{0}
\renewcommand\thesection{\Alph{section}}
\numberwithin{equation}{section}

\section{The $\nagm$ and $\agpm$ Methods}\label{appendix::alg}
\vspace{-0.5em}
\subsection{The $\nagm$ Method}
\vspace{-1.75em}
\begin{algorithm}[!htpb]
	\caption{The $\nagm$ Method}
	\label{algorithm::nag}
	\begin{algorithmic}[1]
		\State\textbf{Input}: An initial iterate $X^{(0)}=\begin{bmatrix}
		t^{(0)} &  R^{(0)}
		\end{bmatrix}\in\R^{d\times n}\times SO(d)^n$ and $X^{(-1)}=\begin{bmatrix}
		t^{(-1)} &  R^{(-1)}
		\end{bmatrix}\in\R^{d\times n}\times SO(d)^n$, $s^{(0)}\in[1,\,+\infty)$, and the maximum number of iterations $N$.
		\State\textbf{Output}: A sequence of iterates $\{X^{(k)}, \, s^{(k)}\}$.\vspace{0.2em} 
		\Function { $\nagm$\,}{$X^{(0)},\,X^{(-1)},\,s^{(0)},\, N$}
		\For{$k=0\rightarrow N-1$}
		\State $s^{(k+1)}\leftarrow\dfrac{\sqrt{4s^{(k)}{}^2+1}+1}{2}$,\quad $Y^{(k)}\leftarrow X^{(k)}+\dfrac{s^{(k)}-1}{s^{(k+1)}}\left(X^{(k)}-X^{(k-1)}\right)$
		\State $\begin{bmatrix}
		\theta_1^{(k)} & \cdots &\theta_n^{(k)} 
		\end{bmatrix}\leftarrow Y^{(k)}\mathrm{\Phi}$
		\For{$i=1\rightarrow n$}
		\vspace{0.15em}
		\State $R_i^{(k+1)}\leftarrow\arg\max\limits_{R_i\in SO(d) }\trace(R_i^\transpose\theta_i^{(k)})$
		\vspace{-0.15em}
		\EndFor
		\State $R^{(k+1)}\leftarrow\begin{bmatrix}
		R_1^{(k+1)} & \cdots & R_n^{(k+1)}
		\end{bmatrix}$
		\State $t^{(k+1)}\leftarrow  R^{(k+1)}\mathrm{\Xi}+X^{(k)}\mathrm{\Psi}$
		\State $X^{(k+1)}\leftarrow\begin{bmatrix}
		& t^{(k+1)} & R^{(k+1)}
		\end{bmatrix}$
		\EndFor
		\State \textbf{return} $\{X^{(k)}, \, s^{(k)}\}$
		\EndFunction
	\end{algorithmic}
\end{algorithm}
\vspace{-2em}
\subsection{The $\agpm$ Method}
\vspace{-1.5em}
\begin{algorithm}[!htpb]
	\caption{The $\agpm$ Method}
	\label{algorithm::apm}
	\begin{algorithmic}[1]
		\State\textbf{Input}: An initial iterate $X^{(0)}=\begin{bmatrix}
		t^{(0)} &  R^{(0)}
		\end{bmatrix}\in\R^{d\times n}\times SO(d)^n$, the maximum number of outer iterations $N$, the maximum number of inner iterations $N_0$, $\eta\in(0,\,1]\,$, and $\delta \in [0,\,\infty)$.
		\State\textbf{Output}: A sequence of iterates $\{X^{(k)}\}$.\vspace{0.2em} 
		\Function { $\agpm$}{$X^{(0)},\, N,\, N_0,\, \delta$}
		\vspace{0.2em}
		\State $a^{(0)}\leftarrow1$,\quad$T^{(0)}\leftarrow X^{(0)}$,\quad $f^{(0)}\leftarrow F(X^{(0)})$
		\For{$k=0\rightarrow N-1$}
		\vspace{0.25em}
		\State $\{V^{(i)},\, s^{(i)}\}\leftarrow \nagmo(X^{(k)},\, T^{(k)},\, a^{(k)},\, N_0)$
		\vspace{0.25em}
		\If{$F(V^{(N_0)})\leq f^{(k)}- \delta\cdot \|V^{(N_0)}-X^{(k)}\|^2$}
		\vspace{0.25em}
		\State $a^{(k+1)}\leftarrow s^{(N_0)}$,\quad $X^{(k+1)}\leftarrow V^{(N_0)}$,\quad $T^{(k+1)}\leftarrow V^{(N_0-1)}$
		\vspace{-0.25em}
		\Else
		\State $\{Z^{(i)}\}\leftarrow \gpmo(X^{(k)},\, N_0)$
		\vspace{0.25em}
		\State $a^{(k+1)}\leftarrow 1$,\quad $X^{(k+1)}\leftarrow Z^{(N_0)}$,\quad $T^{(k+1)}\leftarrow Z^{(N_0)}$
		\EndIf
				\vspace{0.15em}
		\State $f^{(k+1)}\leftarrow (1-\eta)\cdot f^{(k)} + \eta\cdot F(X^{(k+1)}) $
		\EndFor
		\State \textbf{return} $\{X^{(k)}\}$
		\EndFunction
	\end{algorithmic}
\end{algorithm}
\newpage
\section{Proofs}
\subsection{Proof of \cref{theorem::first-order}}\label{appendix::proof1}
If we let 
\begin{equation}\label{eq::FR}
F_{ij}^R(X)=\frac{1}{2}\|R_i\nR_{ij}-R_j\|^2
\end{equation}
and
\begin{equation}\label{eq::Ft}
F_{ij}^t(X)=\frac{1}{2}\|R_i\nt_{ij}+t_i-t_j\|^2,
\end{equation}
then we obtain
\begin{subequations}\label{eq::dFR}
\begin{equation}
\nabla_{R_i} F_{ij}^R(X)=R_i-R_j\nR_{ij}^\transpose,
\end{equation}
\begin{equation}
\nabla_{R_j} F_{ij}^R(X)=R_j-R_i\nR_{ij},
\end{equation}
\end{subequations} 
and
\begin{subequations}\label{eq::dFt}
\begin{equation}
\nabla_{t_i} F_{ij}^t(X)=R_i\nt_{ij}+t_i-t_j,
\end{equation}
\begin{equation}
\nabla_{R_i} F_{ij}^t(X)=\big(R_i\nt_{ij}+t_i-t_j\big)\nt_{ij}^\transpose,
\end{equation}
\begin{equation}
\nabla_{t_j} F_{ij}^t(X)=t_j-R_i\nt_{ij}-t_i.
\end{equation}
\end{subequations} 
Note that $F_{ij}^R(X)$ and  $F_{ij}^t(X)$ only depend on $t_i$, $R_i$, $t_j$ and $R_j$, and as a result, $\nabla F_{ij}^R(X)$ and  $\nabla F_{ij}^t(X)$ are well defined by \cref{eq::dFR,eq::dFt}, respectively. Then, from \cref{eq::dFR,eq::dFt,eq::FR,eq::Ft}, it is straightforward to show that
\begin{equation}\label{eq::proxFR}
\begin{aligned}
&\|R_i\nR_{ij}-\frac{1}{2}\Rk{k}_i\nR_{ij}-\frac{1}{2}\Rk{k}_j\|^2+\|R_j-\frac{1}{2}\Rk{k}_i\nR_{ij}-\frac{1}{2}\Rk{k}_j\|^2\\
=&\|R_i-\Rk{k}_i\|^2+\|R_j-\Rk{k}_j\|^2+\trace\Big(\big(R_i-\Rk{k}_i\big)^\transpose\nabla_{R_i}F_{ij}^R(\Xk{k})\Big)+\\
&\trace\Big(\big(R_j-\Rk{k}_j\big)^\transpose\nabla_{R_j}F_{ij}^R(\Xk{k})\Big)+F_{ij}^R(\Xk{k})\\
=&\|R_i-\Rk{k}_i\|^2+\|R_j-\Rk{k}_j\|^2+\\
&\trace\Big(\big(X-\Xk{k}_i\big)^\transpose\nabla F_{ij}^R(\Xk{k})\Big)+F_{ij}^R(\Xk{k}),
\end{aligned}
\end{equation}
and
\begin{equation}\label{eq::proxFt}
\begin{aligned}
&\|R_i\nt_{ij}+t_i-\frac{1}{2}\Rk{k}_i\nt_{ij}-\frac{1}{2}\tk{k}_i-\frac{1}{2}\tk{k}_j\|^2+\|t_j-\frac{1}{2}\Rk{k}_i\nt_{ij}-\frac{1}{2}\tk{k}_i-\frac{1}{2}\tk{k}_j\|^2\\
=&\|(R_i-\Rk{k}_i)\nt_{ij}+t_i-\tk{k}_i\|^2+\|t_j-\tk{k}_j\|^2+\big(t_i-\tk{k}_i\big)^\transpose\nabla_{t_i}F_{ij}^t(\Xk{k})+\\
&\trace\Big(\big(R_i-\Rk{k}_i\big)^\transpose\nabla_{R_i}F_{ij}^t(\Xk{k})\Big)+\big(t_j-\tk{k}_j\big)^\transpose\nabla_{t_j}F_{ij}^t(\Xk{k})+F_{ij}^t(\Xk{k})\\
=&\|(R_i-\Rk{k}_i)\nt_{ij}+t_i-\tk{k}_i\|^2+\|t_j-\tk{k}_j\|^2+\\
&\trace\Big(\big(X-\Xk{k}_i\big)^\transpose\nabla F_{ij}^t(\Xk{k})\Big)+F_{ij}^t(\Xk{k}),
\end{aligned}
\end{equation}
It is by definition that
\begin{equation}\label{eq::F}
F(X)=\sum_{(i,\,j)\in\aEE}\left(\kappa_{ij}\cdot F_{ij}^R(X)+\tau_{ij}\cdot F_{ij}^t(X)\right),
\end{equation}
and 
\begin{equation}\label{eq::dF}
\nabla F(X)=\sum_{(i,\,j)\in\aEE}\left(\kappa_{ij}\cdot \nabla F_{ij}^R(X)+\tau_{ij}\cdot\nabla F_{ij}^t(X)\right).
\end{equation}
 Substitute \cref{eq::proxFR,eq::proxFt} into \cref{eq::objn} and simplify the resulting equation with \cref{eq::F,eq::dF}, the result is
\begin{multline}\label{eq::prox_f}
F(\Xk{k})+\trace\left(({X-\Xk{k}})^\transpose{\nabla F(\Xk{k})}\right)+\\\frac{1}{2}\trace\left(\big(X-\Xk{k}\big) \nH\big(X-\Xk{k}\big)^\transpose\right),
\end{multline}
in which $\nOmega\triangleq\begin{bmatrix}
\mathrm{\Omega}^\tau & \nOmega^\nu{}^\transpose\\
\nOmega^\nu & \nOmega^\rho\hphantom{{}^\transpose}
\end{bmatrix}$ with $\nOmega^{\tau}=\diag\{\nOmega_1^\tau,\,\cdots,\,\nOmega_n^\tau\}\in \R^{n\times n}$, $\nOmega^\rho=\diag\{\nOmega_1^\rho,\,\cdots,\,\nOmega_n^\rho\}\in \R^{dn\times dn}$ and $\nOmega^\nu=\diag\{
\nOmega_1^\nu,\,  \cdots ,\, \nOmega_n^\nu \}\in \R^{dn\times n}$, in which 
\begin{subequations}
	\begin{equation}\label{eq::Otau}
	\mathrm{\Omega}_i^\tau = \sum_{(i,\,j)\in\EE}2\cdot\tau_{ij} \in \R,
	\end{equation}
	\begin{equation}\label{eq::Orho}
	\nOmega_i^\rho = \sum\limits_{(i,\,j)\in \EE}2\cdot \kappa_{ij}\cdot\I+\sum\limits_{(i,\,j)\in \aEE}2\cdot\tau_{ij}\cdot\nt_{ij}\nt_{ij}^\transpose\in \R^{d\times d},
	\end{equation}
	\begin{equation}\label{eq::Onu}
	\nOmega_i^\nu = \sum\limits_{(i,\,j)\in \aEE}2\cdot\tau_{ij}\cdot \nt_{ij}\in \R^{d}.
	\end{equation}
\end{subequations}
The proof is completed.

\subsection{Proof of \cref{theorem::bound}}\label{appendix::proof2}
\subsubsection{Proof of \ref{item::bound_1}}
It is without loss of any generality to reformulate \cref{eq::obj_quad} as  
\begin{multline}\label{eq::obj_expand}
F(X)=F(\Xk{k})+\trace\left(({X-\Xk{k}})^\transpose{\nabla F(\Xk{k})}\right)+\\
\frac{1}{2}\trace\left(\big(X-\Xk{k}\big) \nM\big(X-\Xk{k}\big)^\transpose\right).
\end{multline}
Note that \eqref{eq::prox_f} is an upper bound of $F(X)$ and \cref{eq::obj_expand}, and as a result, we obtain $\nOmega\succeq \nM$, which completes the proof of \ref{item::bound_1}. 

\subsubsection{Proof of \ref{item::bound_2}}
From \cref{eq::Onu,eq::Orho,eq::Otau}, if we reorder $X=\begin{bmatrix}
t_1 & \cdots & t_n & R_1 & \cdots & R_n
\end{bmatrix}$ to $X'=\begin{bmatrix}
t_1 & R_1 & t_2 & R_2 \cdots & t_n & R_n
\end{bmatrix}$, then $\nOmega$ is accordingly reordered to a  block diagonal $\nOmega'\triangleq\diag\{\nOmega'_1,\,\cdots,\,\nOmega_n'\}\in \R^{(d+1)n\times (d+1)n}$, in which $\nOmega_i'\in \R^{(d+1)\times (d+1)}$ are the principal minors of $2\cdot\nM$. Let $\lambda_{\max}(\nOmega_i')$, $\lambda_{\max}(\nOmega)$, $\lambda_{\max}(\nM)$, etc., be the greatest eigenvalue of corresponding matrices. As a result of Courant-Fischer theorem \cite[Theorem 4.2.6]{horn2012matrix}, it is straightforward to show $\lambda_{\max}(\nOmega_i')\leq 2\cdot\lambda_{\max}(\nM)$, from which we further obtain $\lambda_{\max}(\nOmega)=\lambda_{\max}(\nOmega')\leq 2\cdot\lambda_{\max}(\nM)$. Then, for any $c\in \R$, if $\dfrac{c}{2}\cdot\I \succeq \nM$, we obtain $c\geq 2\cdot\lambda_{\max}(\nM)$, and thus, $c\geq \lambda_{\max}(\nOmega)$ and $c\cdot\I\succeq \nOmega$, which completes the proof of \ref{item::bound_2}. 

\subsection{Proof of \cref{theorem::opt}}\label{appendix::proof3}
\subsubsection{Proof of \ref{item::opt_1}}
For $\gpm$, it should be noted that we define $G(X|\Xk{k})$ as
\begin{multline}\label{eq::prox_F}
G(X|\Xk{k})=F(\Xk{k})+\trace\left(({X-\Xk{k}})^\transpose{\nabla F(\Xk{k})}\right)+\\\frac{1}{2}\trace\left(\big(X-\Xk{k}\big) \nGamma\big(X-\Xk{k}\big)^\transpose\right)
\end{multline}
in \cref{eq::obj_first2}, with which \cref{eq::obj_first2} is equivalent to
\begin{equation}\label{eq::minG}
\Xk{k+1}=\arg\min_{X\in \R^{d\times n}\times SO(d)^n} G(X|\Xk{k}).
\end{equation}
Since $G(X|\Xk{k})$ is an upper bound of $F(X)$ that attains the same value with $F(X)$ at $\Xk{k}$ and $\Xk{k+1}$ minimizes $G(X|\Xk{k})$, it can be concluded that
\begin{equation}\label{eq::Fd1}
F(\Xk{k+1})\leq G(\Xk{k+1}|\Xk{k})\leq G(\Xk{k}|\Xk{k})= F(\Xk{k}),
\end{equation} 
which suggests that $\gpm$ is non-increasing. 

For $\gpmo$, if we substitute 
\begin{equation}\label{eq::tsolB}
t=-R  \widetilde{V}^\transpose L(W^\tau)^\dagger
\end{equation}
into \cref{eq::obj_quad} and simplify the resulting equation, we obtain
\begin{equation}\label{eq::obj_quadR}
\min_{R\in  SO(d)^n} F'(R)\triangleq\dfrac{1}{2}\trace(R \nM_{\!\!R}  R^\transpose),
\end{equation}
in which 
\begin{equation}
\nM_{\!\!R}= L(\widetilde{G}^\rho) +\widetilde{\mathrm{\Sigma}}^\rho - \widetilde{V}^\transpose L(W^\tau)^\dagger\widetilde{V}.
\end{equation}
For each iterate in $\gpmo$, from \cref{eq::tsolf}, it is straight to show that $F'(R^{(k)})=F(X^{(k)})$, $\nabla F'(R^{(k)})=\nabla_R F(X^{(k)})$ and $\nabla_t F(X^{(k)})=\0$, and as a result, $G(X|X^{(k)})$ can be simplified to 
\begin{multline}\label{eq::GxR}
G(X|X^{(k)})=F'(R^{(k)}) + \trace\left(\nabla F'(R^{(k)})^\transpose(R-R^{(k)})\right)+\\
\frac{1}{2}\trace\left(\big(X-\Xk{k}\big) \nGamma\big(X-\Xk{k}\big)^\transpose\right).
\end{multline}
If we substitute \cref{eq::tsol} into \cref{eq::GxR} and marginalize out $t\in \R^{d\times n}$, we obtain 
\begin{multline}\label{eq::GR}
G'(R|R^{(k)})=F'(R^{(k)}) + \trace\left(\nabla F'(R^{(k)})^\transpose(R-R^{(k)})\right)+\\
\frac{1}{2}\trace\left(\big(R-\Rk{k}\big) \nGamma'\big(R-\Rk{k}\big)^\transpose\right),
\end{multline}
in which
$$\nGamma'=\nGamma^\rho - {\nGamma^\nu}{}^\transpose{\rGamma^\tau}{}^{-1} {\nGamma^\nu} $$
is positive semidefinte, and it should be noted that $$G'(R^{(k)}|R^{(k)})=F'(R^{(k)})=F(X^{(k)}).$$ 
Furthermore, \cref{eq::obj_R} is equivalent to
\begin{equation}\label{eq::GRsol}
R^{(k+1)} = \arg\min_{R\in SO(d)^n} G'(R|R^{(k)}), 
\end{equation}
and it is by definition that
\begin{equation}\label{eq::Fd2}
F(X^{(k+1)})= F'(R^{(k+1)})\leq G'(R^{(k+1)}|R^{(k)}) \leq F'(R^{(k)})=F(X^{(k)}),
\end{equation}
which suggests that $\gpmo$ is non-increasing.

From \cref{eq::Fd1,eq::Fd2}, it can be concluded that $F(X^{(k)})$ is non-increasing, which completes the proof of \ref{item::opt_1}.

\subsubsection{Proof of \ref{item::opt_2}}
From \cref{theorem::opt}\ref{item::opt_1}, it is known that $F(\Xk{k})$ is non-increasing. Furthermore, $F(X)$ is bounded below, and as a result, there exists $F^\infty\in \R$ such that  $F(\Xk{k})\rightarrow F^\infty$, which completes the proof of \ref{item::opt_2}. 
\subsubsection{Proof of \ref{item::opt_3}}
For $\gpm$, it is from \cref{eq::prox_F,eq::minG} that
\begin{multline}
\nonumber
F(\Xk{k})+\trace\Big(\big(\Xk{k+1}-\Xk{k}\big)^\transpose{\nabla F(\Xk{k})}\Big)+\\
\frac{1}{2}\trace\left(\big(\Xk{k+1}-\Xk{k}\big) \nGamma\big(\Xk{k+1}-\Xk{k}\big)^\transpose\right)\leq F(\Xk{k}),
\end{multline}
and from \cref{eq::obj_expand}, $F(\Xk{k+1})$ is equivalent to
\begin{multline}
\nonumber
F(\Xk{k+1})= F(\Xk{k})+\trace\Big(\big(\Xk{k+1}-\Xk{k}\big)^\transpose{\nabla F(\Xk{k})}\Big)+\\
\frac{1}{2}\trace\left(\big(\Xk{k+1}-\Xk{k}\big) \nM\big(\Xk{k+1}-\Xk{k}\big)^\transpose\right),
\end{multline}
which implies 
\begin{multline}
\label{eq::diff_F}
F(\Xk{k+1})-F(\Xk{k})\leq \\
\frac{1}{2}\trace\left(\big(\Xk{k+1}-\Xk{k}\big) (\nM-\nGamma)\big(\Xk{k+1}-\Xk{k}\big)^\transpose\right).
\end{multline}
If $\nGamma\succ\nM$, there exists $\epsilon >0$ such that $\nGamma \succeq \nM + \epsilon\cdot\I$. Then, from \cref{eq::diff_F}, it can be concluded that
\begin{equation}\label{eq::improve}
F(\Xk{k+1})-F(\Xk{k})\leq -\epsilon\cdot\|\Xk{k+1}-\Xk{k}\|^2.
\end{equation}

For $\gpmo$, it is from \cref{eq::GR,eq::GRsol} that 
\begin{multline}
\nonumber
F'(\Rk{k})+\trace\Big(\big(\Rk{k+1}-\Rk{k}\big)^\transpose{\nabla F'(\Xk{k})}\Big)+\\
\frac{1}{2}\trace\left(\big(\Rk{k+1}-\Rk{k}\big) \nGamma'\big(\Rk{k+1}-\Rk{k}\big)^\transpose\right)\leq F'(\Rk{k}),
\end{multline}
and from \cref{eq::obj_quadR}, $F(\Rk{k+1})$ is equivalent to
\begin{multline}
\nonumber
F'(\Rk{k+1})= F'(\Rk{k})+\trace\Big(\big(\Rk{k+1}-\Rk{k}\big)^\transpose{\nabla F'(\Rk{k})}\Big)+\\
\frac{1}{2}\trace\left(\big(\Rk{k+1}-\Rk{k}\big) \nM'\big(\Rk{k+1}-\Rk{k}\big)^\transpose\right),
\end{multline}
which implies
\begin{multline}
\label{eq::diff_FR}
F'(\Rk{k+1})-F'(\Rk{k})\leq \\
\frac{1}{2}\trace\left(\big(\Rk{k+1}-\Rk{k}\big) (\nM'-\nGamma')\big(\Rk{k+1}-\Rk{k}\big)^\transpose\right).
\end{multline}
From $\nGamma \succ \nM\succeq 0$, it is straightforward to show that $\nGamma'\succ\nM'$ and there exists $\epsilon_R > 0$ such that $\nGamma' \succeq \nM'+\epsilon_R\cdot\I$, and as a result, we obtain
\begin{equation}\label{eq::improve1}
F'(\Rk{k+1})-F'(\Rk{k})\leq -\epsilon_R\cdot\|\Rk{k+1}-\Rk{k}\|^2.
\end{equation}
Furthermore, from \cref{eq::tsolf,eq::improve1}, it can be shown that there exists $\epsilon >0$ such that
\begin{equation}\label{eq::dR}
\epsilon_R\cdot\|\Rk{k+1}-\Rk{k}\|^2\geq \epsilon\cdot\|\Xk{k+1}-\Xk{k}\|^2.
\end{equation}
As a result of $F(\Xk{k})=F'(\Rk{k})$ and $F(\Xk{k+1})=F'(\Rk{k+1})$ and \cref{eq::improve1,eq::dR}, we obtain
\begin{equation}\label{eq::improve2}
F(\Xk{k+1})-F(\Xk{k})\leq -\epsilon\cdot\|\Xk{k+1}-\Xk{k}\|^2.
\end{equation}

From \cref{theorem::opt}\ref{item::opt_2}, we obtain $F(\Xk{k})\rightarrow F^\infty$. Then, as a result of \cref{eq::improve,eq::improve2}, it can be shown that for $\gpm$ and $\gpmo$, $\|\Xk{k+1}-\Xk{k}\|\rightarrow 0$ as $k\rightarrow\infty$, which completes the proof of \ref{item::opt_3}.

\subsubsection{Proof of \ref{item::opt_4}}
For $\gpm$, from Riemannian optimization \cite{absil2009optimization,rosen2016se}, if we assume that the Euclidean gradient is $\nabla F(X)=\begin{bmatrix}
\nabla_t F(X) &\;&\nabla_R F(X)
\end{bmatrix}$,
in which $\nabla_t F(X)\in \R^{d\times n}$ and $\nabla_R F(X)\in \R^{d\times dn}$ correspond to the translation $t=\begin{bmatrix}
t_1 & \cdots & t_n
\end{bmatrix}\in \R^{d\times n}$ and the rotation 
$R=\begin{bmatrix}
R_1 & \cdots & R_n
\end{bmatrix}\in SO(d)^n$, respectively, then the Riemannian gradient $\grad F(X)$ can be computed as
\begin{equation}\label{eq::gradFX}
\grad F(X)=\begin{bmatrix}
\grad_t F(X) &\;& \grad_R F(X)
\end{bmatrix},
\end{equation}
in which
\begin{equation}\label{eq::gradFt}
\grad_t F(X)=\nabla_t F(X)
\end{equation} 
corresponds to the translation $t$, and
\begin{equation}\label{eq::gradFR}
\grad_R F(X)= \nabla_R F(X)-R\;\mathrm{SymBlockDiag}_d(R^\transpose\nabla_R F(X))
\end{equation}
corresponds to the rotation $R$. In \cref{eq::gradFR}, $\mathrm{SymBlockDiag}_d:\R^{dn\times dn}\rightarrow \R^{dn\times dn}$ is a linear operator
\begin{equation}
\nonumber
\mathrm{SymBlockDiag}_d(Z)\triangleq\frac{1}{2}\mathrm{BlockDiag}_d(Z+Z^\transpose),
\end{equation}
in which $\mathrm{BlockDiag}_d:\R^{dn\times dn}\rightarrow \R^{dn\times dn}$ is also a a linear operator that extracts the $(d\times d)$-block diagonals of a matrix, i.e.,
\begin{equation}
\nonumber
\mathrm{BlockDiag}_d(Z)\triangleq\begin{bmatrix}
Z_{11} & & \\
&\ddots &\\
& & Z_{nn}
\end{bmatrix}.
\end{equation} 
As a result, \cref{eq::gradFX,eq::gradFt,eq::gradFR} result in a linear operator $\QQ_X:\R^{d\times (d+1)n}\rightarrow\R^{d\times (d+1)n} $ that depends on $X$ such that the Riemannian gradient $\grad F(X)$ and the Euclidean gradient ${\nabla F(X)}$ are related as
\begin{equation}\label{eq::gradF}
\grad F(X) = \QQ_X\big(\nabla F(X)\big).
\end{equation}
Similarly, for $G(X|\Xk{k})$ in \cref{eq::prox_F}, the Riemannian gradient $\grad G(X|\Xk{k})$ is
\begin{equation}
\nonumber
\begin{aligned}
\grad G(X|\Xk{k})=&\QQ_X\big(\nabla F(\Xk{k})\big)+\QQ_X\big((X-\Xk{k})\nGamma\big)\\
=&\QQ_{X}\big(\nabla F(X)\big)+\QQ_{X}\big(\nabla F(\Xk{k})-\nabla F(X)\big)+\\
&\QQ_X\big((X-\Xk{k})\nGamma\big)\\
=&\grad F(X)+\QQ_{X}\big(\nabla F(\Xk{k})-\nabla F(X)\big)+\\
&\QQ_X\big((X-\Xk{k})\nGamma\big),
\end{aligned}
\end{equation}
and thus, we obtain
\begin{equation}\label{eq::gradG}
\begin{aligned}
&\grad G(\Xk{k+1}|\Xk{k})\\
=&\grad F(\Xk{k+1})+\QQ_{\Xk{k+1}}\big(\nabla F(\Xk{k})-\nabla F(\Xk{k+1})\big)+\\
&\QQ_{\Xk{k+1}}\big((\Xk{k+1}-\Xk{k})\nGamma\big)
\end{aligned}
\end{equation}
Note that $\Xk{k+1}$ minimizes $G(X|\Xk{k})$, and thus, $\grad G(\Xk{k+1}|\Xk{k})=\0$ always holds, with which \cref{eq::gradG} suggests
\begin{multline}\label{eq::gradFx}
	\grad F(\Xk{k+1})
	=\QQ_{\Xk{k+1}}\big(\nabla F(\Xk{k+1})-\nabla F(\Xk{k})\big)+\\
	\QQ_{\Xk{k+1}}\big((\Xk{k}-\Xk{k+1})\nGamma\big).
\end{multline}
Then, it can be shown that
\begin{multline}\label{eq::gradFnorm}
	\|\grad F(\Xk{k+1})\|\leq\|\QQ_{\Xk{k+1}}\big(\nabla F(\Xk{k})-\nabla F(\Xk{k+1})\big)\|+\\
	\quad\quad\hspace{7em}\|\QQ_{\Xk{k+1}}\big((\Xk{k+1}-\Xk{k})\nGamma\big)\| \\
	\leq \|\QQ_{\Xk{k+1}}\|\cdot\|\nabla F(\Xk{k})-\nabla F(\Xk{k+1}\|+\\
	\|\QQ_{\Xk{k+1}}\|_2\cdot\|\nGamma\|_2\cdot\|\Xk{k+1}-\Xk{k}\|,
\end{multline}
in which $\|\cdot\|_2$ denotes the induced 2-norm of linear operators. It is known that $\nabla F(X)$ is Lipschitz continuous, then there exists $L>0$ such that
\begin{equation}\label{eq::QFk}
	\|\nabla F(\Xk{k})-\nabla F(\Xk{k+1})\|\leq  L\cdot\|\Xk{k+1}-\Xk{k}\|.
\end{equation}
From \cref{eq::QFk,eq::gradFnorm}, we obtain
\begin{equation}\label{eq::gradFxnorm}
\|\grad F(\Xk{k+1})\|\leq \|\QQ_{\Xk{k+1}}\|_2\cdot(L+\|\nGamma\|_2)\cdot\|\Xk{k+1}-\Xk{k}\|.
\end{equation}
From \cref{eq::gradFX,eq::gradFt,eq::gradFR}, it can be seen that $\QQ_X$ only depends on $R\in SO(d)^n$ and $SO(d)^n$ is a compact manifold, and thus, $\|\QQ_{X}\|_2$ is bounded. Furthermore, \cref{theorem::opt}\ref{item::opt_3} indicates that $\|\Xk{k+1}-\Xk{k}\|\rightarrow 0$ if $\nGamma\succ \nM$, from which and the equation above, it can be concluded that
\begin{equation}\label{eq::gradFxnorm1}
\|\grad F(\Xk{k+1})\|\rightarrow 0
\end{equation}
as $k\rightarrow\infty$ if $\nGamma\succ \nM$.

For $\gpmo$,  the Riemannian gradient of $F'(R)$ in \cref{eq::obj_quadR} is 
\begin{equation}\label{eq::gradF_R}
\grad F'(R)= \nabla F'(R)-R\;\mathrm{SymBlockDiag}_d(R^\transpose\nabla F'(R)).
\end{equation}
From \cref{eq::GR}, we obtain
\begin{equation}
\nonumber
\begin{aligned}
\grad G'(R|\Rk{k})=&\QQ_R\big(\nabla F'(\Rk{k})\big)+\QQ_R\big((R-\Rk{k})\nGamma'\big)\\
=&\QQ_{R}\big(\nabla F'(R)\big)+\QQ_{R}\big(\nabla F'(\Rk{k})-\nabla F'(R)\big)+\\
&\QQ_R\big((R-\Rk{k})\nGamma'\big)\\
=&\grad F'(R)+\QQ_{R}\big(\nabla F'(\Rk{k})-\nabla F'(R)\big)+\\
&\QQ_R\big((R-\Rk{k})\nGamma'\big),
\end{aligned}
\end{equation}
in which $\QQ_{R}:\R^{d\times dn}\rightarrow\R^{d\times dn}$ is the linear operator defined by \cref{eq::gradF_R}, and we obtain
\begin{equation}
\begin{aligned}
\grad G'(\Rk{k+1}|\Rk{k})=&\grad F'(\Rk{k+1})+\QQ_{\Rk{k+1}}\big(\nabla F'(\Rk{k})-\nabla F'(\Rk{k+1})\big)+\\
&\QQ_{\Rk{k+1}}\big((\Rk{k+1}-\Rk{k})\nGamma'\big).
\end{aligned}
\end{equation}
Similar to $\gpm$, it can be shown that
\begin{equation}\label{eq::gradGk'}
\|\grad F'(\Rk{k+1})\|\rightarrow0.
\end{equation}
Moreover, as mentioned in the proof of \ref{item::opt_1}, we have $\nabla F'(R^{(k)})=\nabla_R F(X^{(k)})$ and $\nabla_t F(X^{(k)})=\0$, which further indicates that
$\|\grad F(\Xk{k})\|\rightarrow 0$.
\subsubsection{Proof of \ref{item::opt_5} and \ref{item::opt_6}}
Note that $\nGamma=\alpha\cdot\I +\nOmega \succ \nM$ if $\alpha >0$. Then, from \ref{item::opt_3} and \ref{item::opt_4} of \cref{theorem::opt}, it can be concluded that \ref{item::opt_5} and \ref{item::opt_6} hold as long as $\alpha > 0$, which completes the proof of \ref{item::opt_5} and \ref{item::opt_6}.

\subsection{Proof of \cref{theorem::nopt}}\label{appendix::proof4}
\subsubsection{Proof of \ref{item::agpm_1}}
Even though \cref{algorithm::apm*} is not necessarily a descent algorithm, we can still prove $f^{(k+1)}\leq f^{(k)}$ and $F(\Xk{k})\leq f^{(k)}$ by induction. Note that $f^{(0)}=F{(\Xk{0})} $, and thus, $F{(\Xk{0})}\leq f^{(0)} $. If $\Xk{k+1}$ is generated from $\nagm$ or $\nagmo$, we obtain $F(\Xk{k+1})\leq f^{(k)}$; otherwise, $\Xk{k+1}$ is generated from $\gpm$ or $\gpmo$, and according to \cref{theorem::opt}, we obtain $F{(\Xk{k+1})}\leq F{(\Xk{k})}$, from which it can be further shown that  $F{(\Xk{k+1})}\leq f^{(k)}$ as long as $F{(\Xk{k})}\leq f^{(k)} $. If $F(\Xk{k+1})\leq f^{(k)}$, we obtain $f^{(k+1)}=(1-\eta)\cdot f^{(k)}+\eta\cdot F(\Xk{k+1})\leq f^{(k)}$ and $F(\Xk{k+1})\leq f^{(k+1)}$ for any $\eta\in (0,\,1]$.  As a result, it can be concluded that $f^{(k+1)}\leq f^{(k)}$ and $F(\Xk{k})\leq f^{(k)}$. Furthermore, $f^{(k)}$ is a actually convex combination of $F(\Xk{0}),\, \cdots,\, F(\Xk{k})$, and $F(X)$ is bounded below, and thus, $f^{(k)}$ is also bounded below and there exists $F^\infty$ such that $f^{(k)}\rightarrow F^\infty$. Since $f^{(k)}\rightarrow F^\infty$, we obtain $(1-\eta)\cdot f^{(k)}+\eta\cdot F(\Xk{k})\rightarrow F^\infty$ as well, and then  $F(\Xk{k})\rightarrow F^\infty$ as $k\rightarrow\infty$, which completes the proof of \ref{item::agpm_1}. 

\subsubsection{Proof of \ref{item::agpm_2}}
If $\nGamma\succ \nM$, there exists $\epsilon > 0$ such that
$\nGamma\succeq \nM + \epsilon\cdot\I$. From \cref{algorithm::apm*,algorithm::apm}, it can be concluded that 
$$F(\Xk{k+1})\leq f^{(k)}-\delta\cdot\|\Xk{k+1}-\Xk{k}\|^2$$
if $\Xk{k+1}$ is from $\nagm$ or $\nagmo$, or
$$F(\Xk{k+1})\leq f^{(k)}-\epsilon\cdot\|\Xk{k+1}-\Xk{k}\|^2$$
if $\Xk{k+1}$ is from $\gpm$ or $\gpmo$. As a result,  we obtain
\begin{equation}\label{eq::Fk}
F(\Xk{k+1})\leq f^{(k)}-\phi\cdot\|\Xk{k+1}-\Xk{k}\|^2,
\end{equation}
in which $\phi=\min\{\delta,\,\epsilon\}$.
From \cref{eq::Fk} and $f^{(k+1)}=(1-\eta)\cdot f^{(k)}+\eta\cdot F(\Xk{k})$, we obtain
\begin{equation}
\nonumber
f^{(k+1)}\leq f^{(k)}-\eta\cdot\phi\cdot\|\Xk{k+1}-\Xk{k}\|^2.
\end{equation}
Since $f^{(k)}\rightarrow F^\infty$ and $\eta,\,\phi>0$, it can concluded that $\|\Xk{k+1}-\Xk{k}\|\rightarrow 0$ as $k\rightarrow\infty$, which completes the proof of \ref{item::agpm_2}.

\subsubsection{Proof of \ref{item::agpm_3}} For $\agpm$, note that $N_0=1$, then  $\Xk{k+1}$ is actually evaluated as
\begin{equation}\label{eq::minGY}
\Xk{k+1}=\arg\min_{X\in \R^{d\times n}\times SO(d)^n} G(X|\Yk{k})
\end{equation}
if $\Xk{k+1}$ is from $\nagm$, in which 
\begin{equation}\label{eq::Yk}
\Yk{k}=X^{(k)}+\dfrac{s^{(k)}-1}{s^{(k+1)}}\left(X^{(k)}-X^{(k-1)}\right)
\end{equation}
From \cref{eq::prox_F}, we obtain  
\begin{equation}\label{eq::gradGY}
\begin{aligned}
\grad G(\Xk{k+1}|\Yk{k})=&\grad F(\Xk{k+1})+\QQ_{\Xk{k+1}}\big(\nabla F(\Yk{k})-\nabla F(\Xk{k+1})\big)+\\
&\QQ_{\Xk{k+1}}\big((\Xk{k+1}-\Yk{k})\nGamma\big).
\end{aligned}
\end{equation}
Similar to \cref{eq::gradFx,eq::gradFxnorm}, it can be shown that
\begin{multline}
	\nonumber
	\grad F(\Xk{k+1})
	=\QQ_{\Xk{k+1}}\big(\nabla F(\Xk{k+1})-\nabla F(\Yk{k})\big)+\\
	\QQ_{\Xk{k+1}}\big((\Yk{k}-\Xk{k+1})\nGamma\big)
\end{multline}
and
\begin{equation}\label{eq::gradFynorm}
	\|\grad F(\Xk{k+1})\|\leq \|\QQ_{\Xk{k+1}}\|_2\cdot(L+\|\nGamma\|_2)\cdot\|\Xk{k+1}-\Yk{k}\|.
\end{equation}
From \cref{algorithm::nag}, note that $s^{(k)}\geq 1$, and thus
\begin{equation}\label{eq::skk}
	0\leq \frac{s^{(k)}-1}{s^{k+1}}=\frac{2s^{(k)}-2}{\sqrt{4s^{(k)}{}^2+1}+1}\leq \frac{2s^{(k)}-2}{2s^{(k)}}\leq 1.
\end{equation}
As a result of \cref{eq::Yk,eq::skk}, it is straightforward to show that
\begin{multline}\label{eq::sk}
	\|\Xk{k+1}-\Yk{k}\|\leq \|\Xk{k+1}-\Xk{k}\| + \frac{s^{(k)}-1}{s^{(k+1)}}\|\Xk{k}-\Xk{k-1}\|\\
	\leq \|\Xk{k+1}-\Xk{k}\| + \|\Xk{k}-\Xk{k-1}\|,
\end{multline}
from which and \cref{theorem::nopt}\ref{item::agpm_3}, we obtain
\begin{equation}\label{eq::XY}
\|\Xk{k+1}-\Yk{k}\|\rightarrow 0.
\end{equation}
Then, in terms of  $\Xk{k+1}$ resulting from $\nagm$, since $\QQ_{\Xk{k+1}}$, $\|\nGamma\|_2$ and $L$ are nonnegative and bounded, we obtain from \cref{eq::XY,eq::gradFynorm} that $\|\grad F(\Xk{k+1})\|\rightarrow 0$ as $k\rightarrow\infty$ if $\nGamma\succ \nM$. On the other hand, following a similar derivation of \cref{eq::gradFxnorm1} in \cref{theorem::opt}\ref{item::opt_4}, in terms of $\Xk{k+1}$ from $\gpm$, we also obtain $\|\grad F(\Xk{k+1})\|\rightarrow 0$ as $k\rightarrow\infty$ if $\nGamma\succ \nM$. Therefore, no matter $\Xk{k+1}$ is from $\nagm$ or $\gpm$, it can be concluded that $$\|\grad F(\Xk{k+1})\|\rightarrow 0$$
always holds as $k\rightarrow\infty$ if $\nGamma\succ \nM$.

For $\agpmo$, the proof is similar to that of $\agpm$, which is omitted due to space limitation.

\subsubsection{Proof of \ref{item::agpm_4} and \ref{item::agpm_5}} Note that $\nGamma=\alpha\cdot\I +\nOmega \succ \nM$ if $\alpha >0$. Then, the proofs of \ref{item::agpm_4} and \ref{item::agpm_5} are implementation of \ref{item::agpm_2} and \ref{item::agpm_3} of \cref{theorem::nopt}, respectively.

\section{Experiment Results}\label{appendix::results}
\subsection{The Comparisons of Convergence on SLAM Datasets}
\begin{figure}[!htpb]
	\vspace{-2em}
	\setcounter{subfigure}{0}
	\centering
	\begin{tabular}{cc}
		\hspace{5mm}\subfloat[][\tt ais2klinik]{\includegraphics[trim =10mm 6mm 1mm 0mm,width=0.47\textwidth]{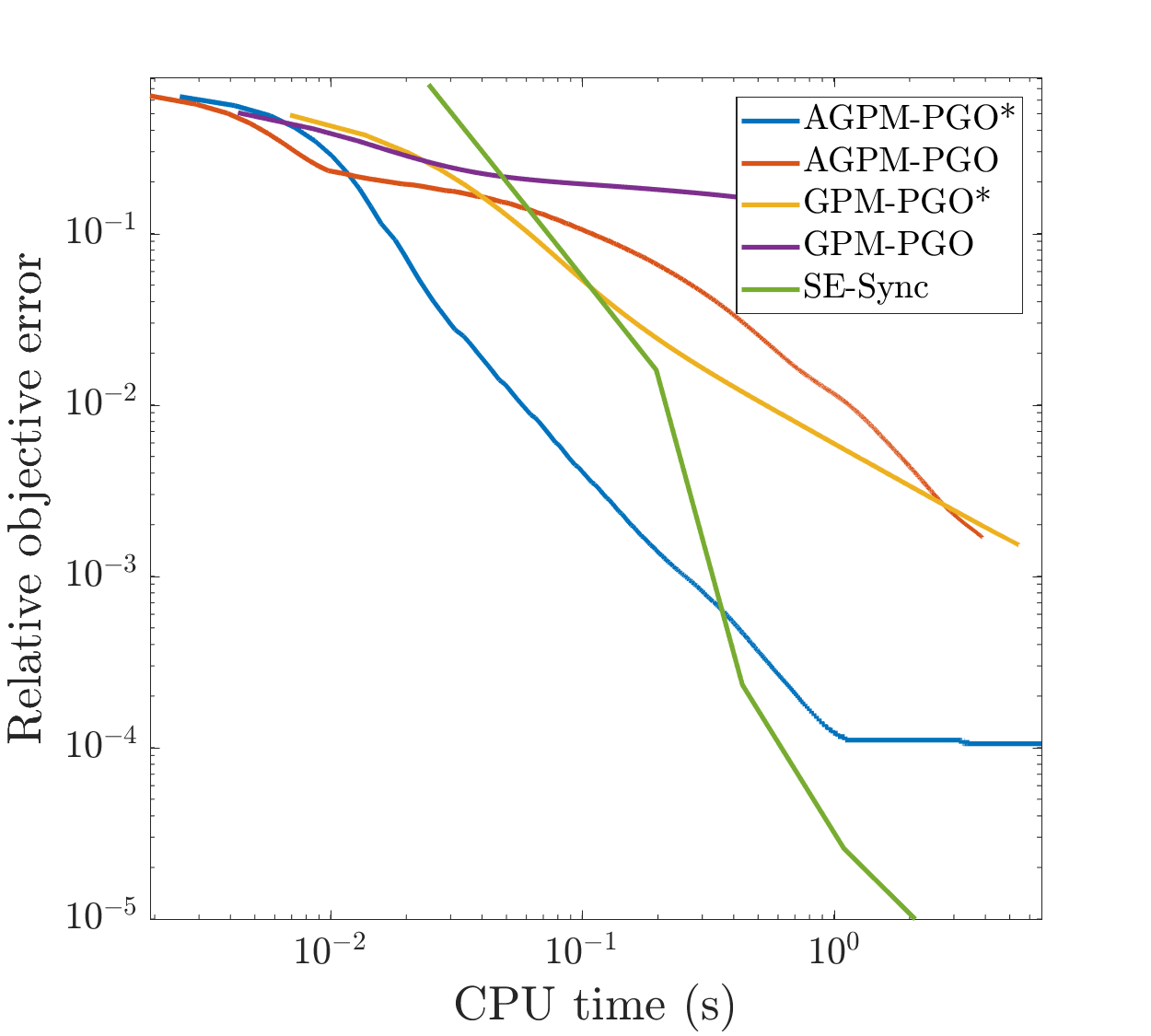}}\hspace{2.5em} &
		\subfloat[][\tt city]{\includegraphics[trim =10mm 6mm 1mm 0mm,width=0.47\textwidth]{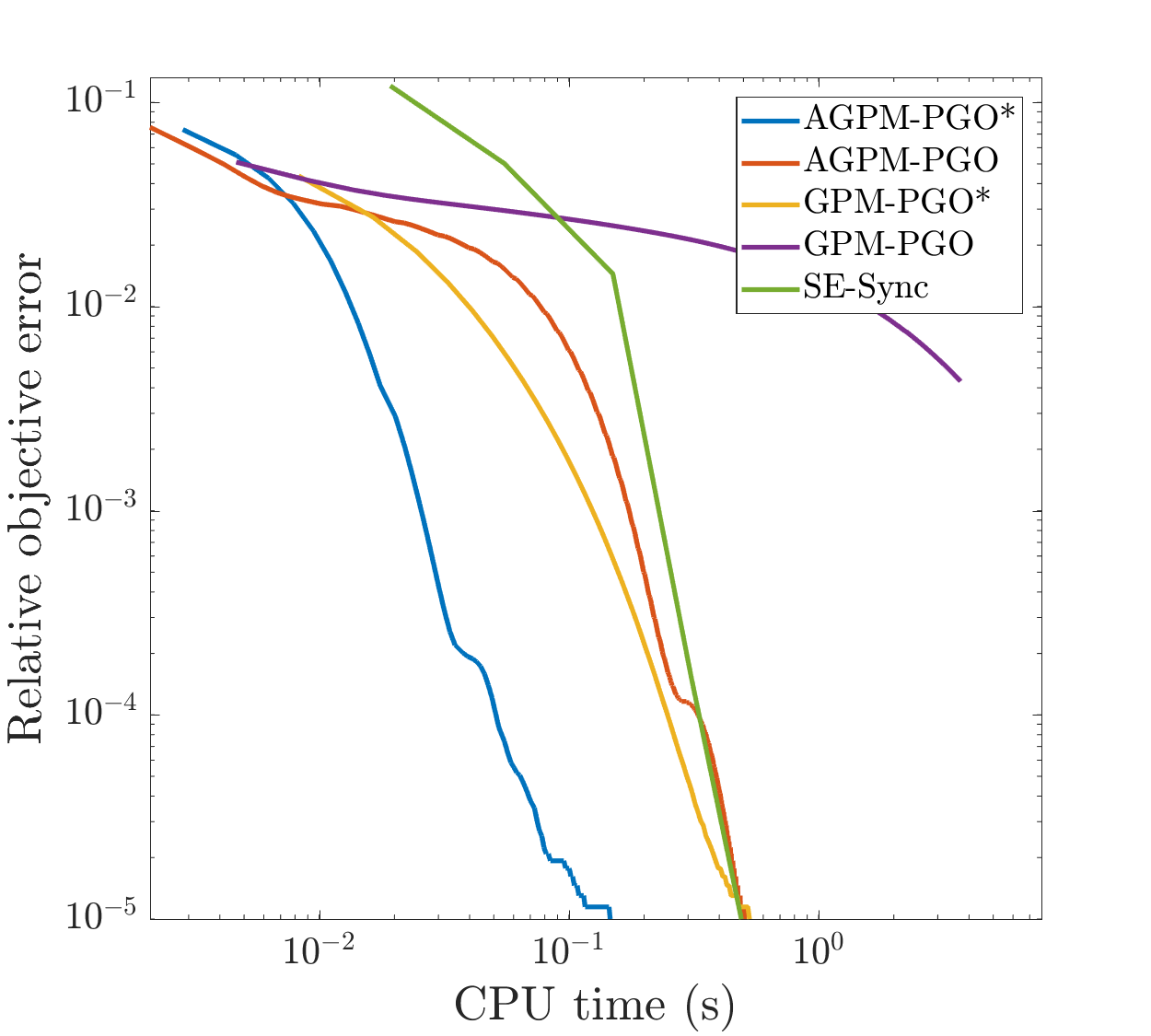}}\\
		\hspace{5mm}\subfloat[][\tt CSAIL]{\includegraphics[trim =10mm 6mm 1mm 0mm,width=0.47\textwidth]{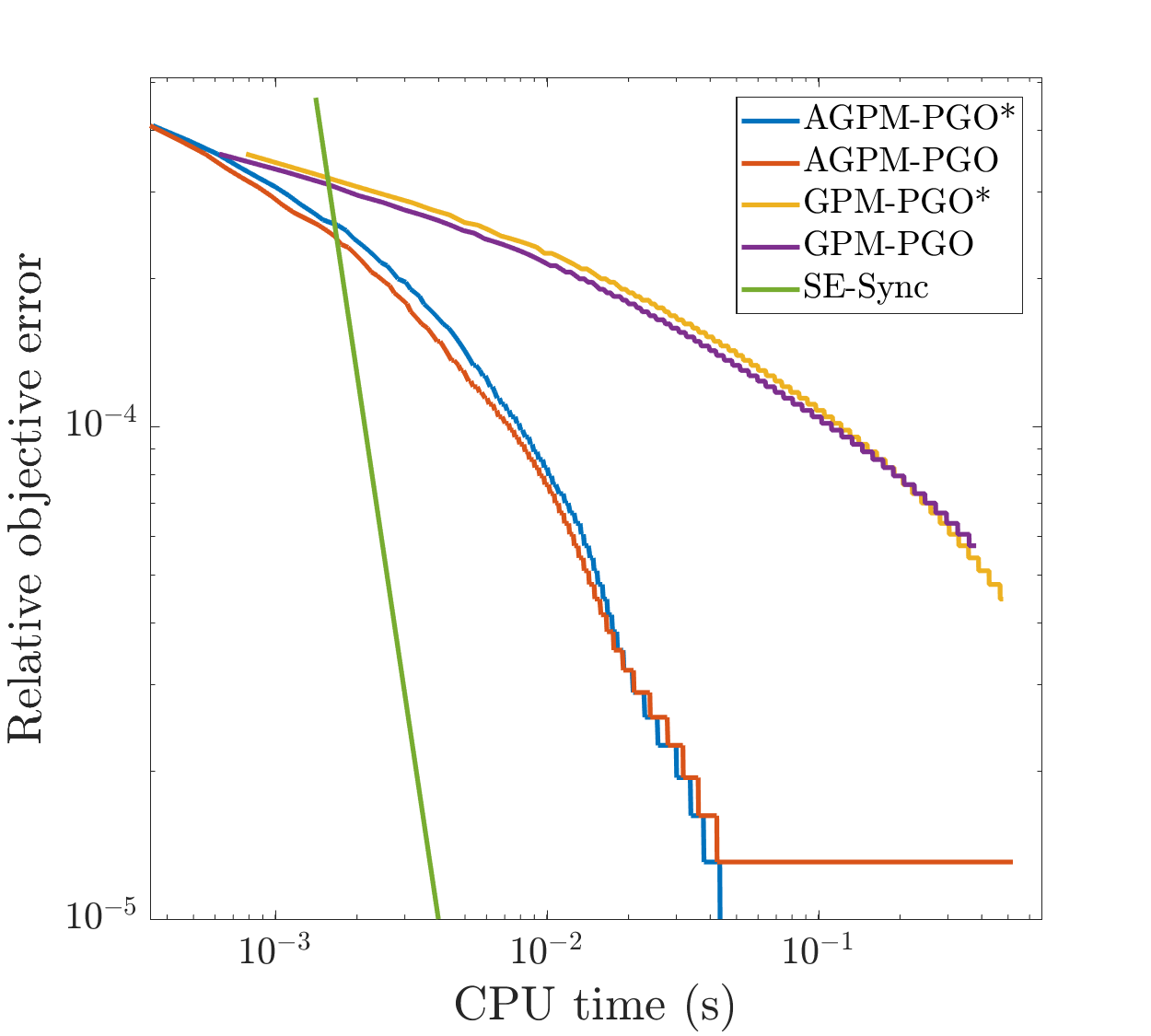}}\hspace{2.5em} &
		\subfloat[][\tt manhattan]{\includegraphics[trim =10mm 6mm 1mm 0mm,width=0.47\textwidth]{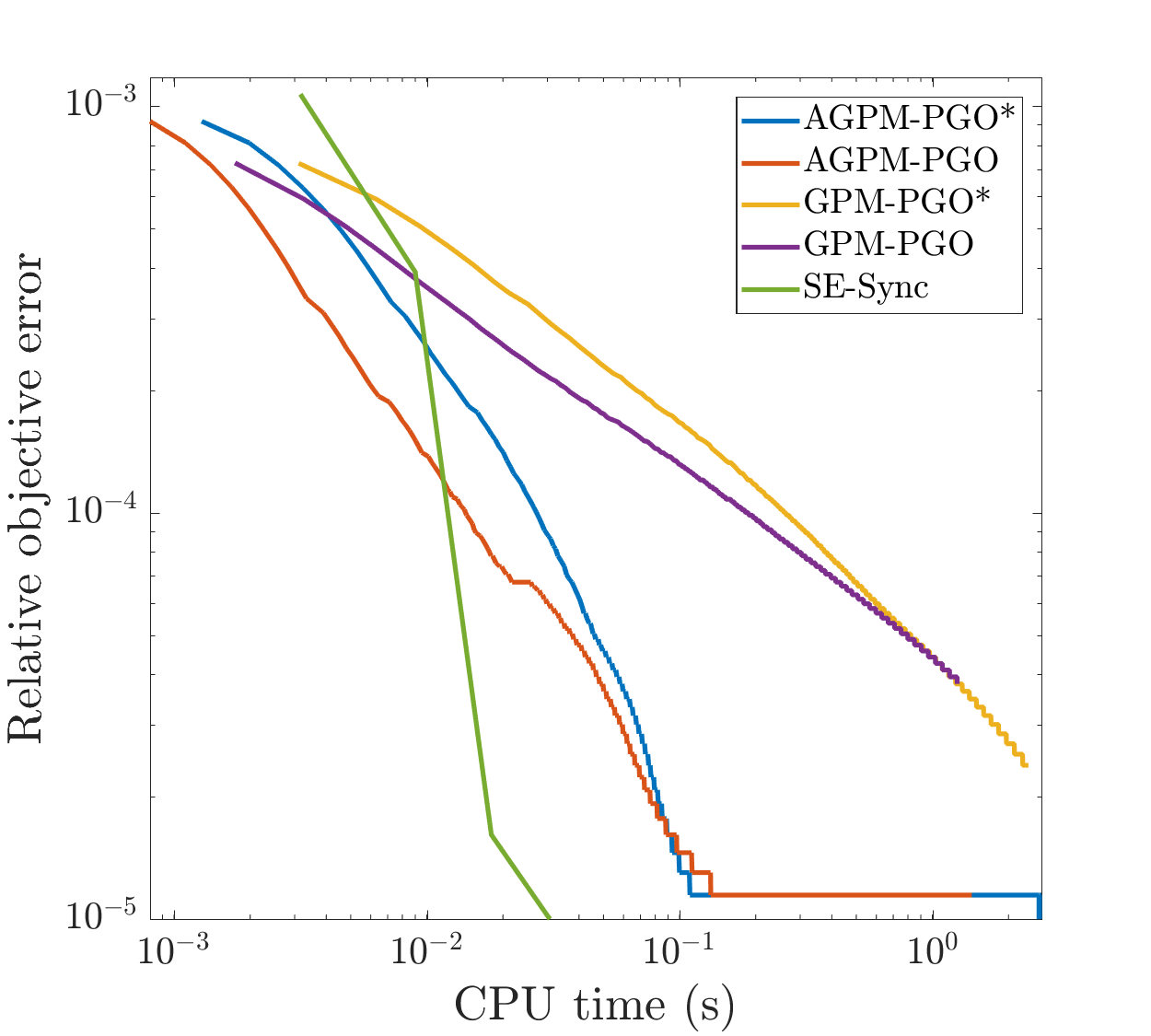}}\\
		\hspace{5mm}\subfloat[][\tt intel]{\includegraphics[trim =10mm 6mm 1mm 0mm,width=0.47\textwidth]{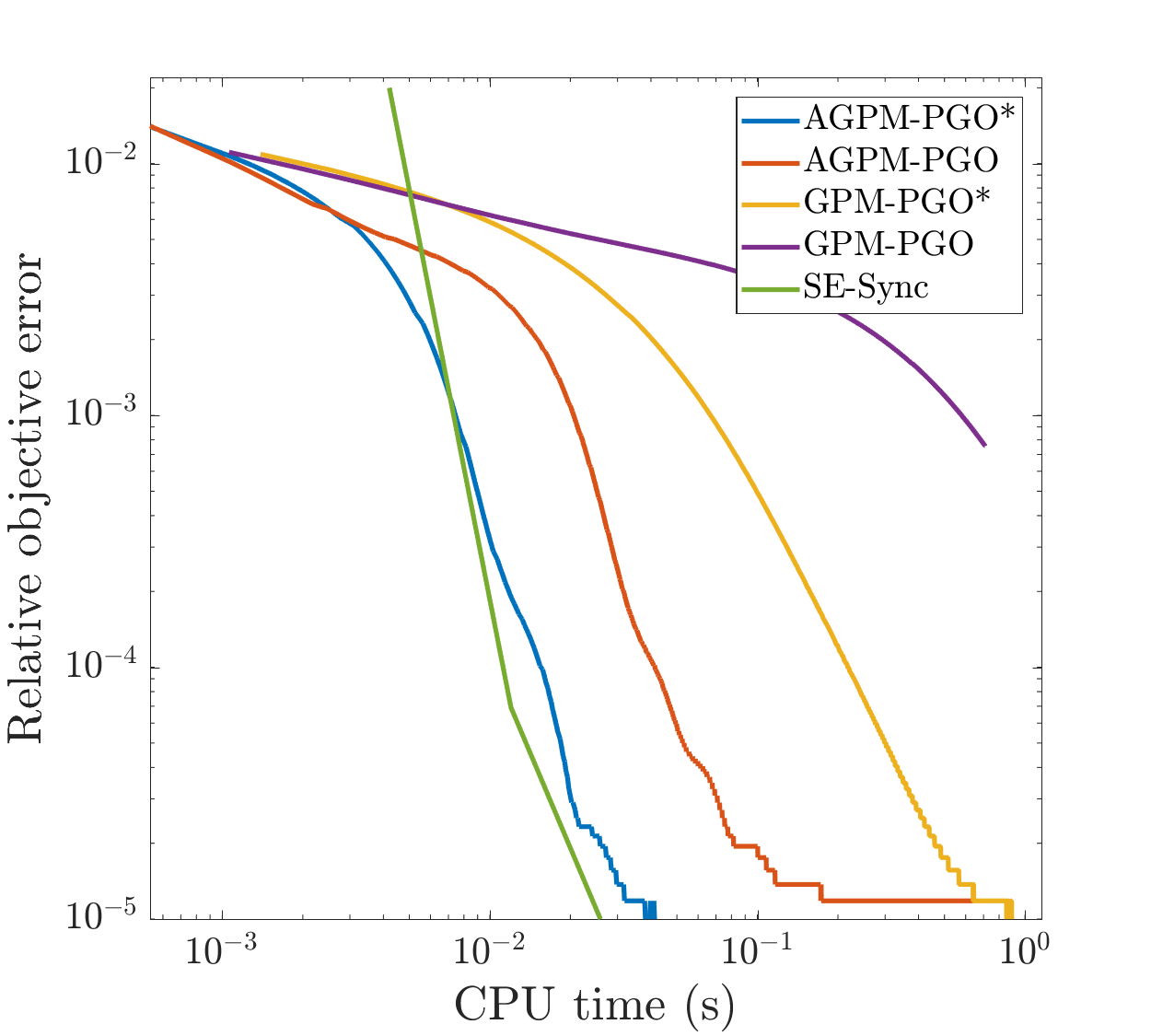}}\hspace{2.5em} &
		\subfloat[][\tt cubicle]{\includegraphics[trim =10mm 6mm 1mm 0mm,width=0.47\textwidth]{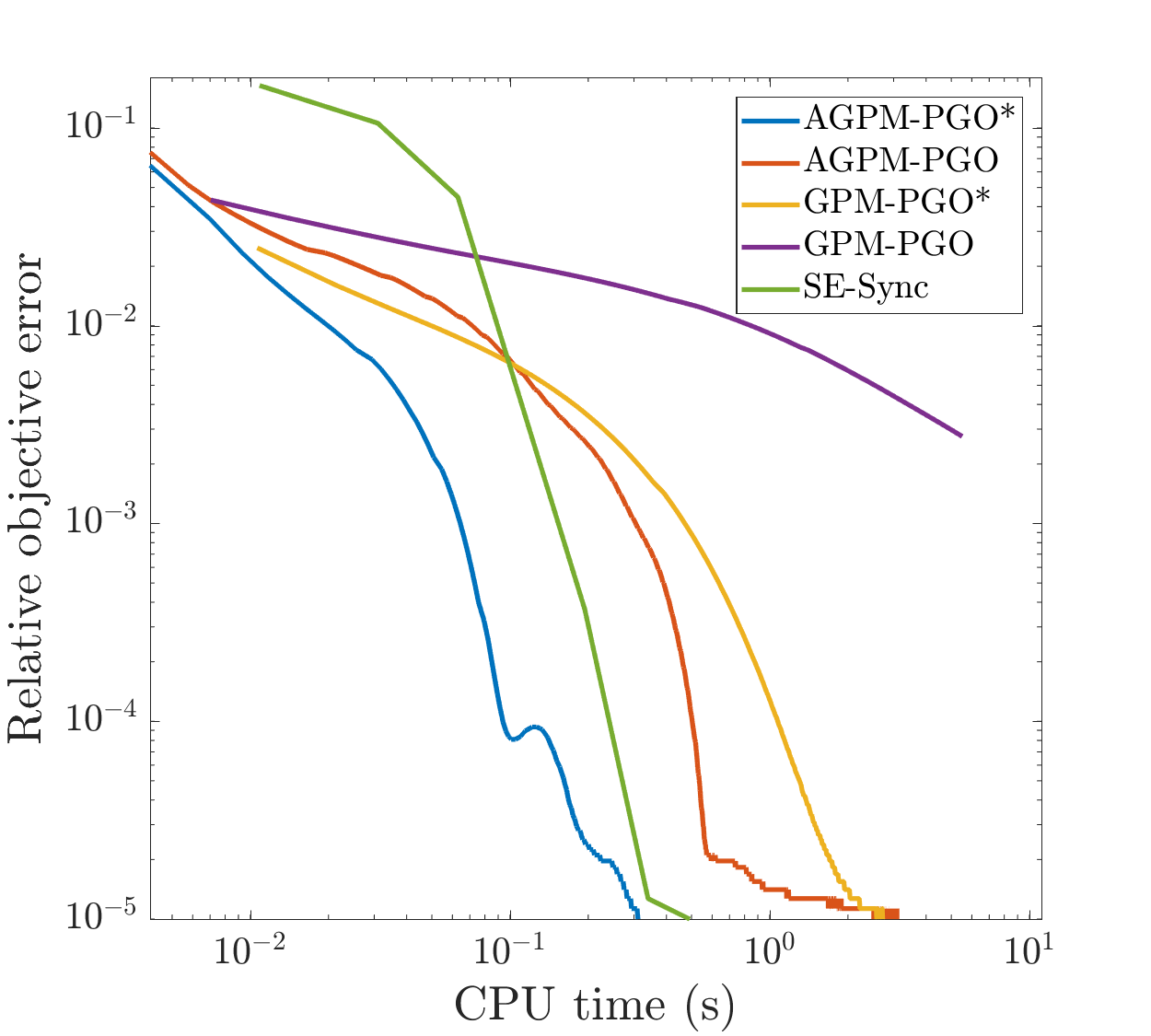}}\\
	\end{tabular}
\end{figure}

\begin{figure}[!htpb]
	\vspace{-2em}
	\setcounter{subfigure}{6}
	\centering
	\begin{tabular}{cc}
		\hspace{5mm}\subfloat[][\tt garage]{\includegraphics[trim =10mm 6mm 1mm 0mm,width=0.47\textwidth]{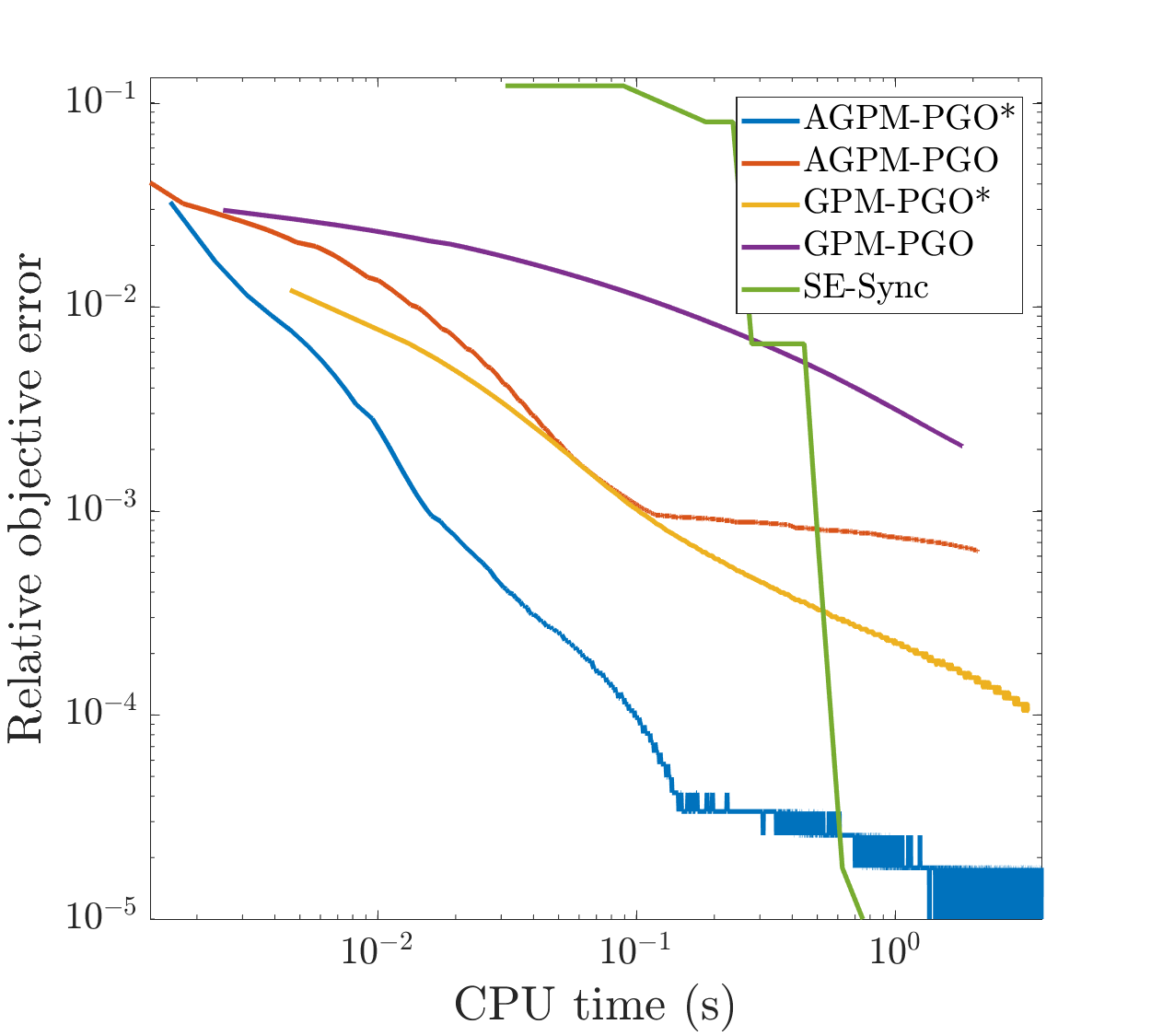}}\hspace{2.5em} &
		\subfloat[][\tt grid]{\includegraphics[trim =10mm 6mm 1mm 0mm,width=0.47\textwidth]{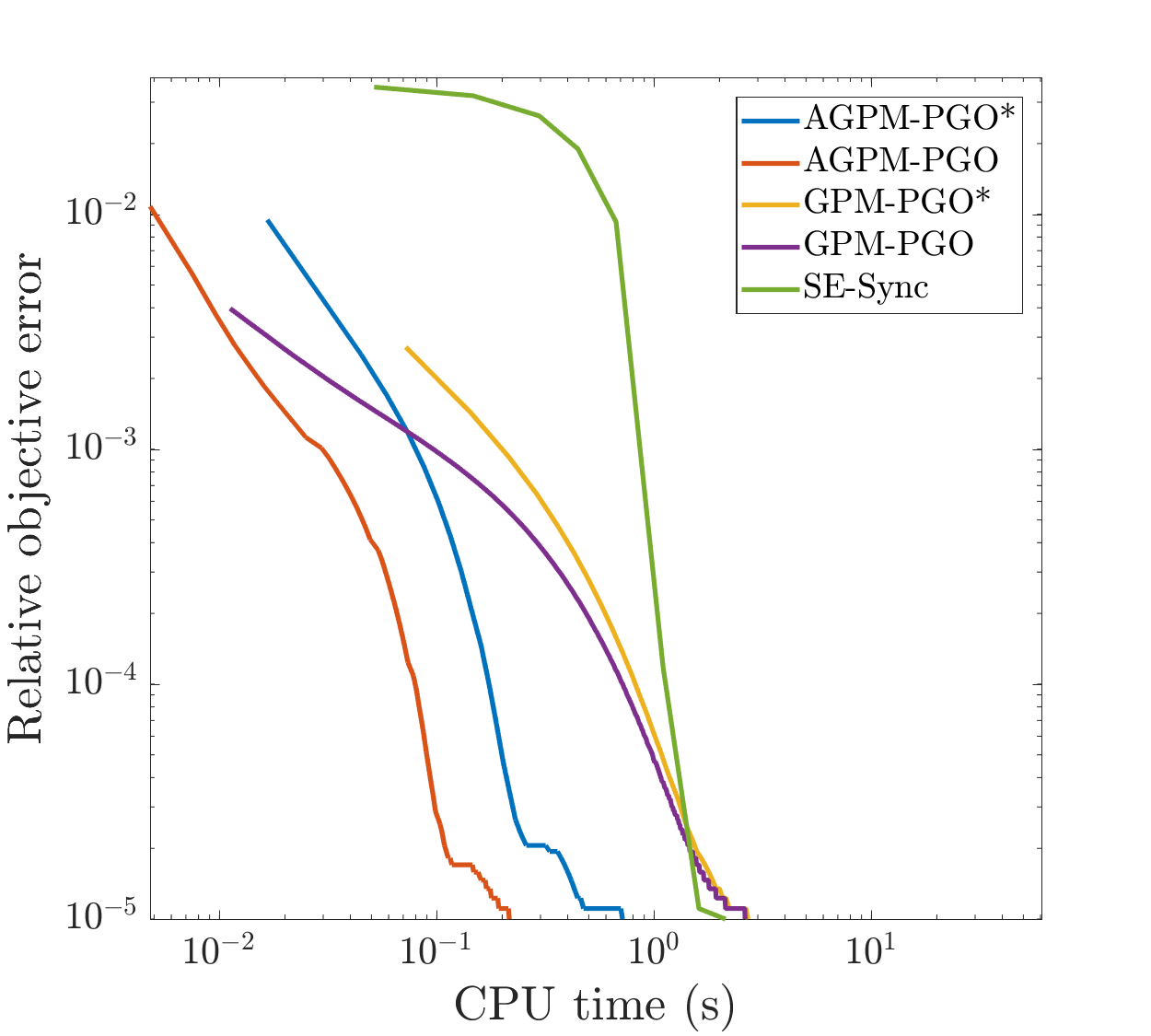}}
	\end{tabular}
\end{figure}

\begin{figure}[!htpb]
	\vspace{-2em}
	\setcounter{subfigure}{8}
	\centering
	\begin{tabular}{cc}
		\hspace{5mm}\subfloat[][\tt rim]{\includegraphics[trim =10mm 6mm 1mm 0mm,width=0.47\textwidth]{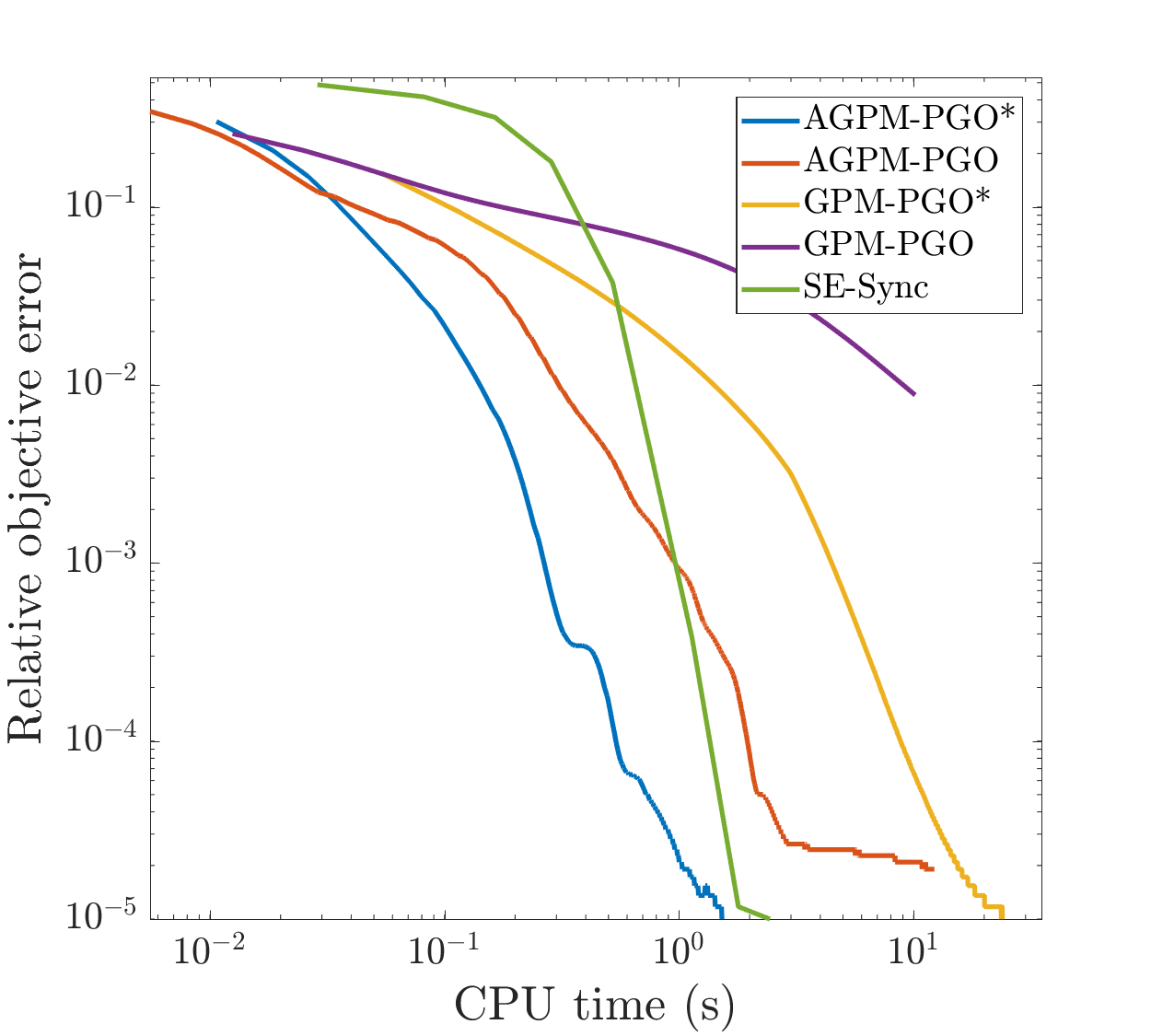}}\hspace{2.5em} &
		\subfloat[][\tt sphere]{\includegraphics[trim =10mm 6mm 1mm 0mm,width=0.47\textwidth]{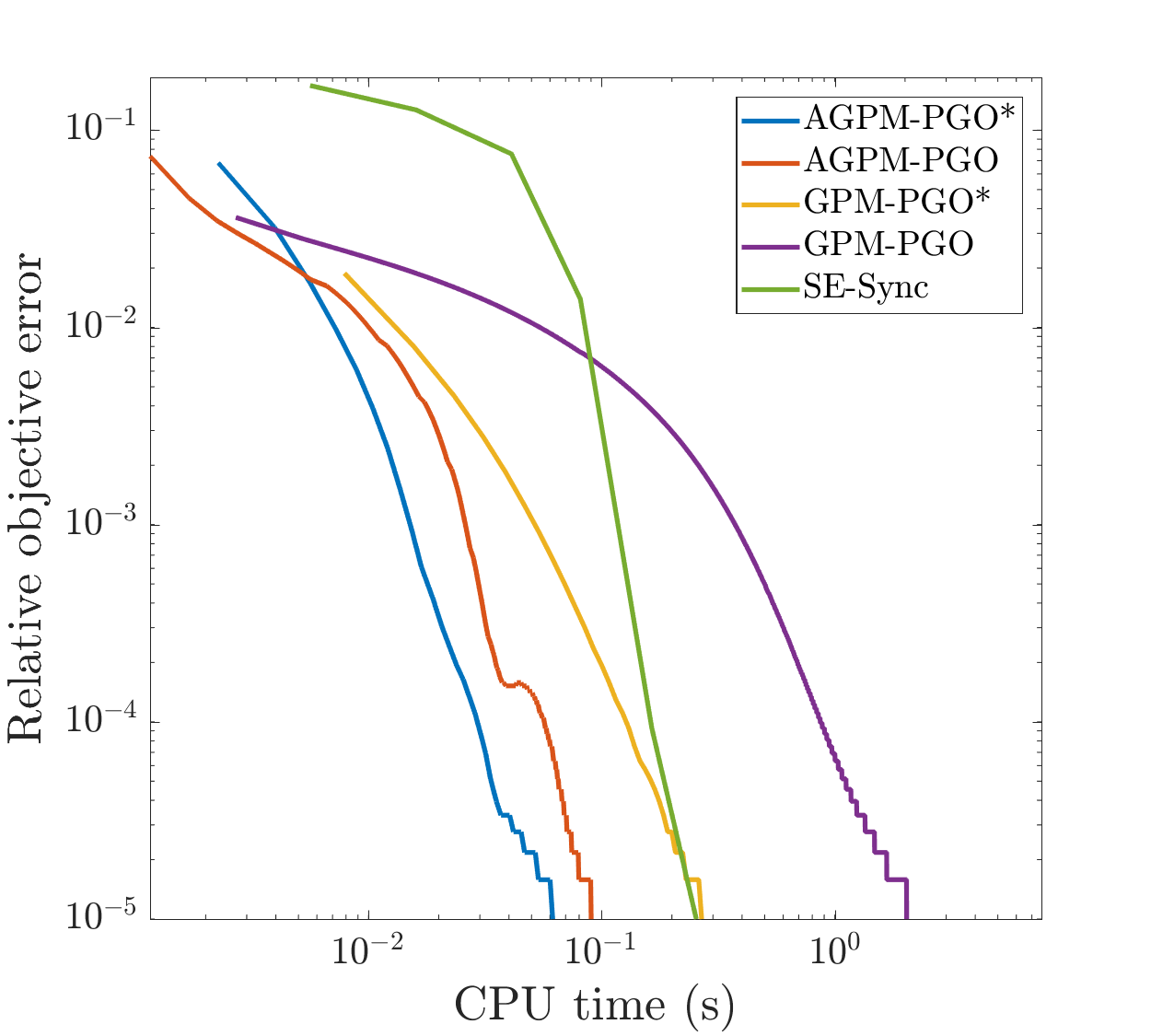}}\\
		\hspace{5mm}\subfloat[][\tt torus]{\includegraphics[trim =10mm 6mm 1mm 0mm,width=0.47\textwidth]{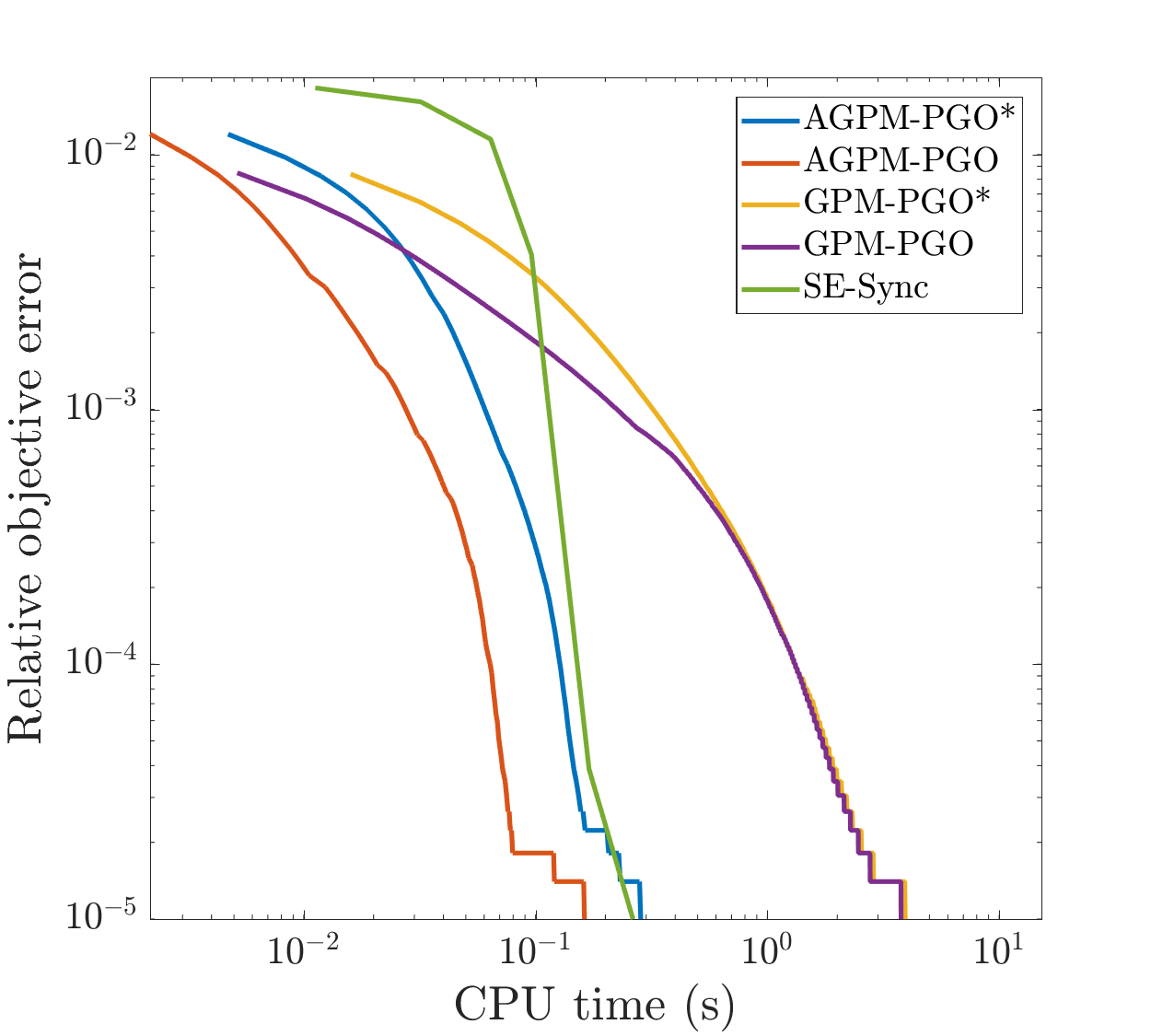}}\hspace{2.5em} &
		\subfloat[][\tt sphere-a]{\includegraphics[trim =10mm 6mm 1mm 0mm,width=0.47\textwidth]{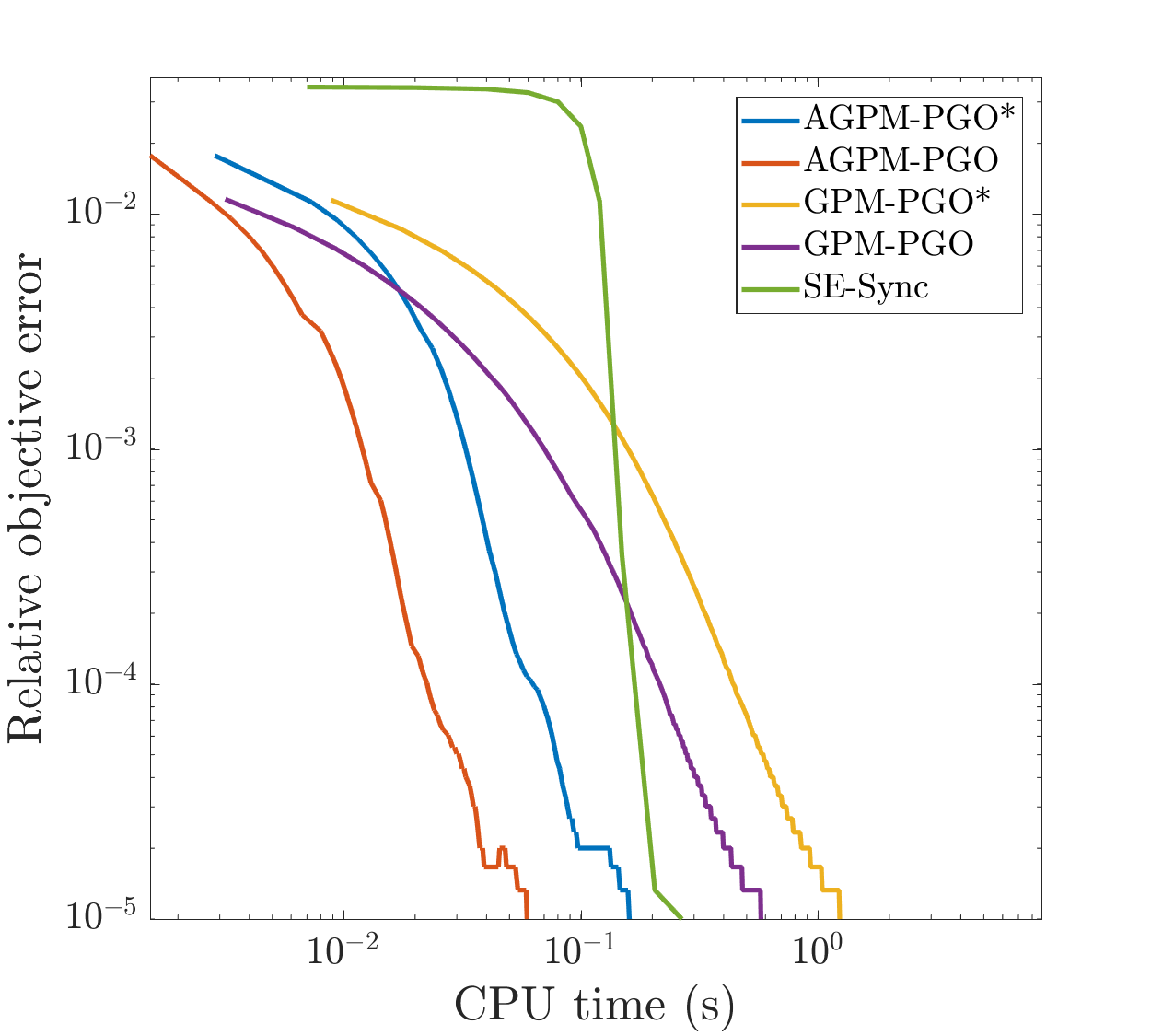}}
	\end{tabular}
\caption{The convergence comparison of $\agpmo$, $\agpm$, $\gpmo$, $\gpm$ and $\sesync$ on 2D and 3D SLAM datasets.}
\end{figure}

\end{document}